\newtheorem{Thm}{Theorem}[section]
\newtheorem{Prop}[Thm]{Proposition}
\newtheorem{Lem}[Thm]{Lemma}
\newtheorem{Cor}[Thm]{Corollary}
\newtheorem{Property}[Thm]{Property}
\theoremstyle{remark}
\newtheorem{Rem}[Thm]{Remark}
\theoremstyle{definition}
\newtheorem{Def}[Thm]{Definition}
\def\Z{{\mathbb Z}}
\def\R{{\mathbb R}}
\def\Q{{\mathbb Q}}
\def\calA{\mathscr{A}}
\def\calG{\mathscr{G}}
\def\calM{\mathscr{M}}
\def\calR{\mathscr{R}}
\def\Diff{\mathrm{Diff}}
\def\fEmb{\mathrm{Emb}^{\mathrm{fr}}}
\def\ve{\varepsilon}
\def\even{\mathrm{even}}
\def\odd{\mathrm{odd}}
\newcommand{\p}{\partial}
\def\Cr{{\mathrm Cr}}
\def\psc{\mathsf{psc}}
\begin{document}

\title[Families of diffeomorphisms and concordances]{Families of
  diffeomorphisms and concordances \\ detected by trivalent graphs}

\author{Boris Botvinnik}
\address{
Department of Mathematics\\
University of Oregon \\
Eugene, OR, 97405\\
USA
}
\thanks{BB was partially supported by Simons collaboration grant 708183}
\email{botvinn@uoregon.edu}

\author{Tadayuki Watanabe} \address{Department of Mathematics \\ Kyoto
  University \\ Kyoto 606-8502 \\ Japan }
\email{tadayuki.watanabe@math.kyoto-u.ac.jp}
\thanks{TW was partially
  supported by JSPS Grant-in-Aid for Scientific Research 21K03225 and by
  RIMS, Kyoto University}
  \subjclass[2000]{57M27, 57R57, 58D29, 58E05, 53C27, 57R65, 58J05,
    58J50} \date{\today} \maketitle
\vspace*{-10mm}
\begin{abstract}
 We study families of diffeomorphisms detected by trivalent graphs via
 the Kontsevich classes. We specify some recent results and
 constructions of the second named author to show that those
 non-trivial elements in homotopy groups
 $\pi_*(B\Diff_{\p}(D^d))\otimes \Q$ are lifted to homotopy groups of
 the moduli space of $h$-cobordisms $\pi_*(B\Diff_{\sqcup}(D^d\times
 I))\otimes \Q$. As a geometrical application, we show that those
 elements in $\pi_*(B\Diff_{\p}(D^d))\otimes \Q$ for $d\geq 4$ are
 also lifted to the rational homotopy groups
 $\pi_*(\calM^\psc_{\p}(D^d)_{h_0})\otimes \Q$ of the moduli space of
 positive scalar curvature metrics. Moreover, we show that the same
 elements come from the homotopy groups $\pi_*(\calM^\psc_{\sqcup}
 (D^d\times I; g_0)_{h_0})\otimes \Q$ of moduli space of concordances
 of positive scalar curvature metrics on $D^d$ with fixed round metric
 $h_0$ on the boundary $S^{d-1}$.
\end{abstract}  

\renewcommand{\baselinestretch}{0.6}\normalsize
\tableofcontents

\renewcommand{\baselinestretch}{1.2}\normalsize
\section{Results}\label{s:results}
\subsection{Extension of graph surgery to concordance}
Let $\Diff_\partial(D^d)$ be the group of diffeomorphisms $\phi :
D^d\to D^d$ which restrict to the identity near the boundary $\p
D^d=S^{d-1}$.

Recently, the second author obtained the following theorem.
\begin{Thm}[\cite{Wa09,Wa18a,Wa18b,Wa21}]\label{thm:nontriv}
Let $d\geq 4$. For each $k\geq 2$, the evaluation of Kontsevich's
characteristic classes on $D^d$-bundles gives an epimorphism
\begin{equation*}
 \pi_{k(d-3)}B\Diff_\partial(D^d)\otimes\Q
\to \calA_k^{\even/\odd}
\end{equation*}
to the space of $\calA_k^{\even/\odd}$ of trivalent graphs.  For
$k=1$, the same result holds for the group $
\pi_{2n-2}B\Diff_\partial(D^{2n+1})\otimes\Q $ for many odd integers
$d=2n+1\geq 5$ satisfying some technical condition\footnote{$d=5,7,9,11,15,19,23,24,25,\ldots$, checked by non-integrality of some rational numbers involving the Bernoulli numbers in \cite{Wa09}. Actually, this holds for all $d\geq 5$ odd (\cite{KrRW}). See also Remark~\ref{rem:pi}}.
\end{Thm}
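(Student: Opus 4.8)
The plan is to prove surjectivity one graph at a time: for each trivalent graph $\Gamma$ representing a class in $\calA_k^{\even/\odd}$ --- so with $2k$ vertices and $3k$ edges --- construct an explicit $D^d$-bundle over $S^{k(d-3)}$ whose Kontsevich class recovers $[\Gamma]$, and then use that such classes $[\Gamma]$ span $\calA_k^{\even/\odd}$ by definition. The first ingredient is the \emph{graph surgery} (or ``$\Gamma$-surgery'') construction: one associates to $\Gamma$ a surgery datum inside a single fiber $D^d$ --- a higher-dimensional analogue of a graph clasper, with a handle-type piece for each vertex of $\Gamma$ and a framed linking sphere for each edge --- and spins the attaching data of these pieces over a parameter sphere of dimension $k(d-3)$. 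The resulting fiberwise surgery on the trivial bundle yields a smooth bundle $\pi_\Gamma\colon E_\Gamma\to S^{k(d-3)}$ that is standard near $\p D^d$, hence a class $[E_\Gamma]\in\pi_{k(d-3)}B\Diff_\partial(D^d)$. For $k=1$ and $d=2n+1$ this is the $\Theta$-graph family of \cite{Wa09}, where the technical hypothesis on $d$ is precisely the non-integrality of a certain Bernoulli-number expression --- the condition that makes the needed attaching/framing datum exist.

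Next I would compute $z_k(E_\Gamma)$. Recall $z_k=\sum_G I_G(\pi)\cdot G$, the sum over trivalent $G$ with $2k$ vertices of the integral $I_G(\pi)$ over the fiberwise Fulton--MacPherson compactification of $\mathrm{Conf}_{2k}$ of the product of the $3k$ propagator forms attached to the edges of $G$ (pullbacks of the volume form of $S^{d-1}$ along the edge directions). The key is \emph{localization}: since $E_\Gamma$ agrees with the trivial bundle away from the surgery locus, the integrand concentrates near the $2k$ vertex pieces, so $I_G(\pi)$ collapses to a finite sum over matchings of the configuration points to the vertex pieces and of the propagators to the edge spheres. The principal matching --- points on the vertices of $\Gamma$ and propagators along the edges of $\Gamma$ --- contributes a nonzero rational multiple of $[\Gamma]$ (morally $\pm|\mathrm{Aut}\,\Gamma|^{-1}$), while each remaining matching either produces a graph killed by the AS/IHX relations or by dimension, or cancels in pairs against a boundary-face contribution. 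Hence $z_k(E_\Gamma)=c_\Gamma[\Gamma]$ with $c_\Gamma\ne 0$.

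Since the $[\Gamma]$ span $\calA_k^{\even/\odd}$, the two steps above give the epimorphism for every $k\ge 2$ and $d\ge 4$. For $k=1$ the target is one-dimensional, spanned by $\Theta$, and the epimorphism for $d=5,7,9,\dots$ follows from the explicit non-integrality computation of \cite{Wa09}; the sharper statement valid for all odd $d\ge 5$ is the homotopy-theoretic input of \cite{KrRW}, obtained there by homotopy-theoretic methods rather than configuration-space integrals.

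The main obstacle is the integral computation. Two things must be controlled: (i) the orientation and sign bookkeeping, delicate enough that its outcome --- the appearance of the \emph{even} versus \emph{odd} graph complex --- is forced by the parity of $d-1$, the degree of the propagator form, so that permuting edges or vertices changes signs accordingly; and (ii) the vanishing of the ``hidden'' and ``anomalous'' boundary faces of the compactified configuration space, which is simultaneously what makes $z_k$ a well-defined closed $\calA_k$-valued form and what guarantees that the localization above leaves no leftover term. Establishing these is the technical heart of \cite{Wa18a,Wa18b,Wa21}.
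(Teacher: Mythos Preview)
The paper does not give its own proof of this theorem: it is quoted from the cited earlier work, with the one-sentence summary ``Theorem~\ref{thm:nontriv} was proved by evaluating Kontsevich's characteristic classes on elements constructed by surgery on trivalent graphs embedded in $D^d$.'' Your outline is a faithful expansion of exactly that strategy --- build $E_\Gamma$ by graph surgery, localize the configuration-space integral to compute $z_k(E_\Gamma)$, and conclude surjectivity --- so there is nothing to compare beyond noting agreement.

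One small inaccuracy worth flagging, since the present paper is explicit about it in \S\ref{s:graph-surgery} and \S\ref{s:bordism}: the graph-surgery bundle is not built directly over $S^{k(d-3)}$ but over the product $B_\Gamma=K_1\times\cdots\times K_{2k}$ of $0$- and $(d-3)$-spheres (one factor per vertex), giving first a bordism class in $\Omega_{k(d-3)}^{SO}(\widetilde{B\Diff}(D^d,\partial))$; the passage to an honest element of $\pi_{k(d-3)}$ uses a separate step collapsing $B_\Gamma$ to a single sphere (Lemma~\ref{lem:BY} here, or the argument of \cite{Wa18a}). Your phrase ``spins the attaching data over a parameter sphere of dimension $k(d-3)$'' elides this, though it does not affect the correctness of the overall sketch.
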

Theorem~\ref{thm:nontriv} was proved by evaluating Kontsevich's
characteristic classes (\cite{Kon}) on elements constructed by surgery
on trivalent graphs embedded in $D^d$.

Here we recall the definition of the spaces $\calA_k^{\even/\odd}$ of
connected trivalent graphs, which are the trivalent parts of
Kontsevich's graph homology \cite{Kon}. In general, trivalent graph
has even number of vertices, and if it is $2k$, then the number of
edges is $3k$. Let $V(\Gamma)$ and $E(\Gamma)$ denote the sets of
vertices and edges of a trivalent graph $\Gamma$,
respectively. Labellings of a trivalent graph $\Gamma$ are given by
bijections $V(\Gamma)\to \{1,2,\ldots,2k\}$, $E(\Gamma)\to
\{1,2,\ldots,3k\}$. Let $\calG_k$ be the vector space over $\Q$
spanned by the set $\calG_k^0$ of all labelled connected trivalent
graphs with $2k$ vertices modulo isomorphisms of labelled graphs. The
version $\calA_k^\even$, which works for even-dimensional manifolds,
is defined by
\begin{equation*}
\calA_k^\even=\calG_k/{\mbox{IHX, label change}},
\end{equation*}
where the IHX relation is given in Figure~\ref{fig:IHX}
\begin{figure}[!htbp]
\begin{center}
\includegraphics[height=1in]{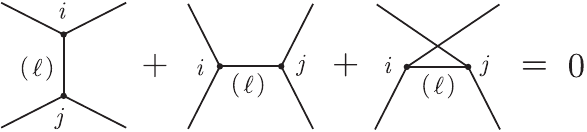}
\end{center}
\caption{IHX relation.}\label{fig:IHX}
\end{figure}

\noindent
and the label change relation is generated by the following relations:
\begin{equation*}
\Gamma'\sim -\Gamma,\qquad \Gamma''\sim \Gamma.
\end{equation*}
Here, $\Gamma'$ is the graph obtained from $\Gamma$ by exchanging
labels of two edges, $\Gamma''$ is the graph obtained from $\Gamma$ by
exchanging labels of two vertices. The version $\calA_k^\odd$, which
works for odd-dimensional manifolds, can be defined similarly as
$\calA_k^\even$ except a small modification in the orientation
convention. Namely, let $\widetilde{\calG}_k$ be the vector space over
$\Q$ spanned by the set $\widetilde{\calG}_k^0$ of all pairs
$(\Gamma,o)$ of labelled connected trivalent graphs $\Gamma$ with $2k$
vertices modulo isomorphisms of labelled graphs and orientations $o$
of the real vector space $H^1(\Gamma;\R)$. Then we define
\begin{equation*}
\calA_k^\odd=\widetilde{\calG}_k/{\mbox{IHX, label change, orientation
    reversal}}
\end{equation*}
where the IHX and the label change
relation is the same as above, and the orientation reversal is the
following:
\begin{equation*}
(\Gamma, -o)\sim -(\Gamma,o). 
\end{equation*}
Let $X$ be a $d$-dimensional path connected smooth manifold with
non-empty boundary.  We also let $\Diff_{\sqcup}(X\times
I):=\Diff(X\times I, X\times\{0\}\cup \partial X\times I)$ be the
group of pseudoisotopies. There is a natural fiber sequence
\begin{equation}\label{eq:diff}
\Diff_{\p}(X\times I) \xrightarrow{i}
\Diff_{\sqcup}({X}\times I) \xrightarrow{\p}
\Diff_{\p}({X}\times \{1\}),
\end{equation}
where $i: \Diff_{\p}({X}\times I) \to \Diff_{\sqcup}({X}\times I) $ is
the inclusion, and $\p: \Diff_{\sqcup}({X}\times I) \to
\Diff_{\p}({X}\times \{1\})$ restricts a diffeomorphism $\psi :
     {X}\times I \to {X}\times I$ to the top part of the boundary
     $\psi|_{{X}\times \{1\}}$. This gives a corresponding fiber
     sequence of the classifying spaces
\begin{equation}\label{eq:bdiff}
B\Diff_{\p}({X}\times I) \xrightarrow{i}
B\Diff_{\sqcup}({X}\times I) \xrightarrow{\p}
B\Diff_{\p}({X}\times \{1\}).
\end{equation}
\begin{Rem} The group of pseudoisotopies $\Diff_{\sqcup}(X\times
    I)$ is often denoted as $C_{\p}(X)$.
\end{Rem}  
The first main result of this paper is the following.
\begin{Thm}[Theorem~\ref{thm:bordism-simplify}]\label{thm:main1}
  Let $d\geq 4$. All the elements given by surgery on
    trivalent graphs with $2k$ vertices, $k \geq 1$, are in the image
    of the homomorphism
\begin{equation*} 
{\p_*: \pi_{k(d-3)}B\Diff_{\sqcup}(X\times
  I) \to \pi_{k(d-3)}B\Diff_{\p}(X).}
\end{equation*}
Let $m := 1$ if $d$ even and $m := (d -1)/2$ for $d$ odd. Furthermore,
each element in the group ${ \pi_{k(d-3)}B\Diff_{\p}(X)}$ constructed
by surgery on a trivalent graph embedded in {$X$} has a lift in the
group ${\pi_{k(d-3)}B\Diff_{\sqcup}(X\times I)}$ represented by a
smooth {$(X\times I)$}-bundle over $S^{k(d-3)}$ that
admits a fibrewise handle decomposition with a single
  handle pair in each fibre, of indices $m$ and $m+1$.
  Moreover, each pair is geometrically cancelling (up to an
  appropriate isotopy).
\end{Thm}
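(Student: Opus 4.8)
The plan is to produce the lift directly from the combinatorial data of $\Gamma$, by realising Watanabe's graph surgery ``one dimension up'' as a family of $h$-cobordisms. Recall first what the proof of Theorem~\ref{thm:nontriv} supplies: the class $\xi_\Gamma\in\pi_{k(d-3)}B\Diff_\p(X)$ attached to a labelled trivalent graph $\Gamma$ with $2k$ vertices is represented by an explicit smooth $X$-bundle $E_\Gamma\to S^{k(d-3)}$ whose monodromy family $\{\phi_t\}$ is supported in a ball $B\subset\operatorname{int}X$ and is assembled, from a fixed embedding of $\Gamma$ into $B$, by ``dragging'' standard pieces modelled on the framed edges and on the vertices of $\Gamma$ around a parametrised family of configurations. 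The structural point I want to extract is that $\{\phi_t\}$ arises as the monodromy of a family of isotopies of framed submanifolds of $B$: for each $t$ the diffeomorphism $\phi_t$ is obtained by moving a framed sphere, of a dimension depending only on $d$ and on the parity of $d$, back to its initial position.

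I would build the lift as follows. In the product $h$-cobordism $X\times I$ introduce a trivial pair of handles of consecutive indices $m$ and $m+1$, arranged so that in an intermediate level set the belt sphere of the $m$-handle meets the attaching sphere $S^m$ of the $(m+1)$-handle transversally in a single point. Then let the attaching sphere of the $(m+1)$-handle, with its normal framing, vary through the family of framed embeddings parametrised by $S^{k(d-3)}$ extracted above, chosen so that cancelling the pair --- which for each fixed $t$ returns the product $X\times I$ --- makes the resulting family of identifications of the top level $X\times\{1\}$ with $X\times\{0\}$ realise exactly the family $\{\phi_t\}$. The total space of this family of $h$-cobordisms is a smooth $(X\times I)$-bundle $\widetilde E_\Gamma\to S^{k(d-3)}$; it is trivialised over $X\times\{0\}\cup\p X\times I$, so has structure group $\Diff_\sqcup(X\times I)$; its restriction to $X\times\{1\}$ is $E_\Gamma$; and each fibre carries, relative to $X\times\{0\}$, a handle decomposition with a single $m$-handle and a single $(m+1)$-handle that cancel geometrically after an isotopy. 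Writing $\widetilde\xi_\Gamma$ for the class of $\widetilde E_\Gamma$, this gives $\xi_\Gamma=\p_*\widetilde\xi_\Gamma$, and all the stated assertions follow from the construction.

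It then remains to pin down the index $m$, which is forced by the parity of $d$ through the local models of graph surgery: for $d$ even the model is an index $(1,2)$ handle pair, so $m=1$; for $d$ odd the submanifolds entering the local model are framed $\tfrac{d-1}{2}$-dimensional spheres, which forces the middle pair $(m,m+1)=\bigl(\tfrac{d-1}{2},\tfrac{d+1}{2}\bigr)$. In both cases the attaching sphere $S^m$ of the upper handle has codimension $d-m\ge 3$ in its level set, because $d\ge 4$; this is at the same time what provides room to perform the dragging inside $\operatorname{int}(X\times I)$ and what later makes the associated surgeries compatible with positive scalar curvature.

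The hard part will be to carry all of this out \emph{coherently in families} over $S^{k(d-3)}$. I will need to: (i) match the family of framed attaching spheres exactly with the graph-surgery family, so that the top of $\widetilde E_\Gamma$ really is $E_\Gamma$ and not merely a bundle representing the same class; (ii) ensure that in every fibre the single handle pair can be brought into geometrically cancelling position, which requires a family version of the Whitney trick together with Cerf-theoretic elimination of the birth--death points that a generic family of fibrewise Morse functions contains; and (iii) achieve this with the prescribed indices, which for $d$ odd is not a formal thickening of $E_\Gamma$ but a genuine re-execution of the graph-surgery construction inside $X\times I$ using the extra coordinate. Keeping the fibrewise handle structure, the geometric cancellation, and the identification of the top with $E_\Gamma$ simultaneously under control over the whole parameter sphere is the principal obstacle; granting it, the statement follows.
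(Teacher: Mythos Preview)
Your overall target is right --- represent the lift by an $(X\times I)$-bundle built from a single $m$/$(m+1)$ handle pair whose upper attaching sphere varies in an $S^{k(d-3)}$-family --- and this is exactly what the paper produces. The gap is in the step you call ``the structural point I want to extract'': you assert that the graph-surgery monodromy $\{\phi_t\}$ already arises from moving \emph{one} framed sphere, and then build the lift on top of that. But in the original construction $E^\Gamma$ is assembled from $2k$ separate Borromean twists, one per vertex, and the parameter space is the product $B_\Gamma=K_1\times\cdots\times K_{2k}$, not $S^{k(d-3)}$. Collapsing all of this to a \emph{single} family of framed embeddings of a \emph{single} sphere is not something one can just read off; it is the main technical content of the paper's proof. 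The paper achieves it by first rewriting each Y-surgery as surgery on three framed Hopf links (Definitions~\ref{def:type-I}, \ref{def:type-II}), then using a ``Hopf chain'' insertion on one edge together with iterated Hopf-link-surgery moves (Lemmas~\ref{lem:Hopf-surgery}, \ref{lem:Y-surgery-link-I}, \ref{lem:Y-surgery-link-II}, Proposition~\ref{prop:family-emb}) to trade the entire Y-link for a $B_\Gamma$-family of embeddings of a single Hopf link $S^{m_1}\cup S^{m_2}$ with one component held fixed (Proposition~\ref{prop:Hopf-Gamma}), and finally using the null-leaf property to factor this $B_\Gamma$-family through a degree-one map to $S^{k(d-3)}$ (Lemma~\ref{lem:family-emb-bordism}, Corollary~\ref{cor:bordism}). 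Your plan contains no mechanism for this reduction.

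Once that reduction is in hand, your step (ii) is over-engineered. You will not need a family Whitney trick or any Cerf-theoretic cancellation of birth--death points: the single handle pair is \emph{born} in geometrically cancelling position, because the two attaching spheres are the components of a Hopf link and hence the belt sphere of the lower handle meets the attaching sphere of the upper handle transversally in one point, fibrewise, by construction. The paper's only remaining trick (Proposition~\ref{prop:pseudoisotopy}) is that the constant $S^{d-2}$-component a priori wants a $(d-1)$-handle; since it is strictly trivial in the family, one may replace that handle by a $1$-handle producing the same intermediate level $X\#(S^{d-1}\times S^1)$, and then attach the moving $2$-handle on top. This is elementary and avoids all the parametrised handle theory you anticipate.
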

\begin{Rem}
The reader who is familiar with Cerf's graphic
(\cite{Ce,HW,Igu} etc.) would find that this condition is equivalent
to that the family admits a graphic consisting of a single ``lens''.
\end{Rem}  
\begin{Rem}
A version of this theorem for bordism group was pointed out to the
second author by Peter Teichner (\cite[Theorem~9.3]{Wa20}).  We would
like to emphasize the following new features in
Theorem~\ref{thm:main1}:
\begin{enumerate}
\item The crucial feature that will be used in our
  applications to rational homotopy groups of the moduli spaces of
  metrics of positive scalar curvature is that our handlebodies are
  built using a single $m$-handle and a $S^{k(d-3)}$-family of $(m +
  1)$-handles that geometrically cancel it (up to an
  appropriate isotopy).
\item We describe in this paper the details about
  the interpretation of the graph surgery in terms of spherical
  modifications along framed Hopf links, which were sketched in
  \cite[\S{9}]{Wa20}. This could also be applied to constructions of
  families of embeddings in a manifold.
\end{enumerate}
\end{Rem}
\begin{Cor}\label{cor:concordance-nontrivial}
  Let $d\geq 4$ and $k\geq 2$. If $d$ is even (resp. if $d$ is odd),
  then $\pi_{k(d-3)}B\Diff_{\sqcup}(D^d\times I)\otimes\Q$
  is nontrivial whenever $\calA_k^{\even}$ (resp. $\calA_k^\odd$) is
  nontrivial.  For $k=1$, $\pi_{2n-2}B\Diff_{\sqcup}(D^{2n+1}\times
  I)\otimes\Q$ is nontrivial for many odd integers $d=2n+1\geq 5$
  satisfying the same technical condition as in
    Theorem~\ref{thm:nontriv}.
\end{Cor}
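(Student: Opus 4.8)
The plan is to deduce the statement directly from Theorem~\ref{thm:nontriv} and Theorem~\ref{thm:main1}, taking $X=D^d$ (and identifying $D^d\times\{1\}$ with $D^d$). The essential point is that the epimorphism of Theorem~\ref{thm:nontriv} is realized geometrically: it is the evaluation of Kontsevich's characteristic classes on the $D^d$-bundles $\alpha_\Gamma\in\pi_{k(d-3)}B\Diff_\p(D^d)$ obtained by surgery on trivalent graphs $\Gamma$ with $2k$ vertices embedded in $D^d$. Consequently the $\Q$-subspace of $\pi_{k(d-3)}B\Diff_\p(D^d)\otimes\Q$ spanned by the classes $\alpha_\Gamma\otimes 1$ already surjects onto $\calA_k^\even$ when $d$ is even, and onto $\calA_k^\odd$ when $d$ is odd.

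Now assume $d\geq 4$, $k\geq 2$, and that $\calA_k^\even$ (resp.\ $\calA_k^\odd$) is nontrivial. Then some graph $\Gamma$ with $2k$ vertices has $\alpha_\Gamma$ mapping to a nonzero element under the Kontsevich evaluation; since the latter is a homomorphism to the $\Q$-vector space $\calA_k^{\even/\odd}$ defined on $\pi_{k(d-3)}B\Diff_\p(D^d)\otimes\Q$, this forces $\alpha_\Gamma\otimes 1\neq 0$ in $\pi_{k(d-3)}B\Diff_\p(D^d)\otimes\Q$. By Theorem~\ref{thm:main1} the element $\alpha_\Gamma$ lies in the image of $\p_*\colon \pi_{k(d-3)}B\Diff_\sqcup(D^d\times I)\to\pi_{k(d-3)}B\Diff_\p(D^d)$; write $\alpha_\Gamma=\p_*(\widetilde\alpha_\Gamma)$. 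Tensoring the long exact homotopy sequence of the fibration \eqref{eq:bdiff} with $\Q$ keeps it exact, and $(\p_*\otimes\mathrm{id})(\widetilde\alpha_\Gamma\otimes 1)=\alpha_\Gamma\otimes 1\neq 0$, so $\widetilde\alpha_\Gamma\otimes 1$ is a nonzero class in $\pi_{k(d-3)}B\Diff_\sqcup(D^d\times I)\otimes\Q$. This proves the first assertion.

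For $k=1$ the argument is identical: Theorem~\ref{thm:main1} is valid for all $k\geq 1$, while the $k=1$ clause of Theorem~\ref{thm:nontriv} furnishes, for each odd $d=2n+1\geq 5$ satisfying the stated technical condition, a surgery bundle in $\pi_{2n-2}B\Diff_\p(D^{2n+1})$ that is rationally nontrivial (here $k(d-3)=d-3=2n-2$); lifting it through $\p_*$ as above produces a nonzero element of $\pi_{2n-2}B\Diff_\sqcup(D^{2n+1}\times I)\otimes\Q$. The only thing to verify carefully is that the ``elements given by surgery on trivalent graphs'' to which Theorem~\ref{thm:main1} applies are precisely the bundles $\alpha_\Gamma$ detected in Theorem~\ref{thm:nontriv}; this is immediate from the construction recalled immediately after Theorem~\ref{thm:nontriv}, so there is no real obstacle here, and the corollary is a formal consequence of the two theorems.
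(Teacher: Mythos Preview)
Your proposal is correct and follows precisely the intended argument: the corollary is an immediate consequence of combining Theorem~\ref{thm:nontriv} (nontriviality of the graph-surgery classes in $\pi_*B\Diff_\partial(D^d)\otimes\Q$) with Theorem~\ref{thm:main1} (these classes lift through $\partial_*$), applied with $X=D^d$. The paper states the corollary without proof for exactly this reason; your write-up simply makes the deduction explicit, and the only superfluous step is invoking exactness of the long exact sequence, since all you use is that $\partial_*\otimes\mathrm{id}$ is a homomorphism.
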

\begin{Rem}\label{rem:pi}
\begin{enumerate}
\item Note that this includes results for pseudoisotopies of $D^4$. It
  was proved in \cite{Wa18b} that $\pi_2 B\Diff_\sqcup(D^4\times
  I)\otimes\Q$ is nonzero.  Theorem \ref{thm:main1} shows that
  $\pi_kB\Diff_\sqcup(D^4\times I)\otimes \Q$ are non-trivial for many
  $k>2$.  This is new result.
\item Recently, A. Kupers and O. Randal-Williams (\cite{KuRW}),
  M. Krannich and O. Randal-Williams (\cite{KrRW}) computed the rational
  homotopy groups of $B\Diff_\partial(D^d)$ in some wide range of
  dimensions surprisingly completely. In particular, it follows
  from their results that for $n>5$, the natural map
\begin{equation*}
\pi_{2n-2}B\Diff_\sqcup(D^{2n+1}\times I)\otimes\Q\to
\pi_{2n-2}B\Diff_\partial(D^{2n+1})\otimes\Q
\end{equation*}
is an isomorphism and both terms are isomorphic to $\Q\oplus
(K_{2n-1}(\Z)\otimes\Q)$. In particular,
Corollary~\ref{cor:concordance-nontrivial} for $d=2n+1>11$ and $k=1$
follows from their results.
\end{enumerate}
\end{Rem}

\subsection{Application to the moduli space of psc-metrics}
Let $h_0$ be the standard round metric on $S^{d-1}=\p D^d$, and
$\calR_{\p}(D^d)_{h_0}$ be the space of Riemannian metrics $g$ on the disk
$D^d$ which have a form $h_0+dt^2$ near the boundary $S^{d-1}$. The group
$\Diff_\partial(D^d)$ acts on $\calR_{\p}(D^d)_{h_0}$ by pulling a metric
back: $g\cdot \phi\mapsto \phi^*g$. It is easy to see that this action
is free, and, since the space $\calR_{\p}(D^d)_{h_0}$ is contractible,
there is a homotopy equivalence
\begin{equation*}
B\Diff_\partial(D^d)\sim \calM_{\p}(D^d)_{h_0} :=
\calR_{\p}(D^d)_{h_0}/\Diff_\partial(D^d).
\end{equation*}
Thus the moduli space $\calM_{\p}(D^d)_{h_0}$ could be thought as a
geometrical model of the classifying space $B\Diff_\partial(D^d)$.
Below we identify the spaces $\calM_{\p}(D^d)_{h_0}$ and
$B\Diff_\partial(D^d)$.  Let $\calR^\psc(D^d)_{h_0}\subset
\calR_{\p}(D^d)_{h_0}$ be a subspace of metrics with positive scalar
curvature (which will abbreviated as ``psc-metrics''). We have the
following diagram of principal $\Diff_\partial(D^d)$-fiber bundles:
\begin{equation*}  
\begin{diagram}
   \setlength{\dgARROWLENGTH}{1.95em}
  \node{\calR^\psc_{\p}(D^d)_{h_0}}
          \arrow{e,t}{i}
          \arrow{s,b}{p}
  \node{\calR_{\p}(D^d)_{h_0}}
          \arrow{s,b}{\bar p}
  \\
  \node{\calM^\psc_{\p}(D^d)_{h_0}}
           \arrow{e,t}{\iota}
  \node{\calM_{\p}(D^d)_{h_0}}
\end{diagram}
\end{equation*}
Here $\calM^\psc_{\p}(D^d)_{h_0}:=\calR^\psc_{\p}(D^d)_{h_0}/\Diff_\partial(D^d)$ is
the moduli space of psc-metrics. 
\begin{Thm}\label{thm:M}
Let $d\geq 4$ be an integer. All classes given by surgery on
trivalent graphs are in the image of the induced map
\begin{equation*}
  \iota_*:\pi_q\calM^\psc_{\p}(D^d)_{h_0}\otimes\Q\to
  \pi_qB\Diff_\partial(D^d)\otimes\Q.
\end{equation*}
Hence, all nontrivial elements of $\pi_qB\Diff_\partial(D^d)\otimes\Q$
given by surgery on trivalent graphs lift to nontrivial elements of
$\pi_q\calM^\psc_{\p}(D^d)_{h_0}\otimes\Q$.
\end{Thm}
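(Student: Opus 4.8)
The plan is to feed the $h$-cobordism lift provided by Theorem~\ref{thm:main1} into the parametrised Gromov--Lawson surgery construction, using the fibrewise handle decomposition to propagate a positive scalar curvature metric from the bottom of each fibre to its top. Fix a class $\alpha\in\pi_qB\Diff_\partial(D^d)\otimes\Q$ obtained by surgery on a trivalent graph with $2k$ vertices, so $q=k(d-3)$. By Theorem~\ref{thm:main1} applied with $X=D^d$, the class $\alpha$ is the top-boundary restriction of a lift $\tilde\alpha\in\pi_qB\Diff_\sqcup(D^d\times I)$, and $\tilde\alpha$ is represented by a smooth $(D^d\times I)$-bundle $\pi:E\to S^q$ which, relative to the trivialised part $D^d\times\{0\}\cup\p D^d\times I$, admits a fibrewise handle decomposition with one handle pair of indices $m$ and $m+1$ in each fibre (geometrically cancelling, up to isotopy), where $m=1$ for $d$ even and $m=(d-1)/2$ for $d$ odd. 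The reason these are the right indices is that $\dim(D^d\times I)=d+1$ and both $m$ and $m+1$ are $\le(d+1)-3=d-2$: for $d$ even this is the hypothesis $d\ge 4$, and for $d$ odd it reads $d\ge 5$, which holds as $d$ is then odd and $\ge 4$. Equivalently, each of the two fibrewise surgeries --- first along an $S^{m-1}$, then along an $S^m$ in a $d$-dimensional level set --- has normal codimension $\ge 3$.

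With this input I would argue as follows. Equip the trivial ``bottom'' sub-bundle $E_0:=D^d\times S^q\subset E$ with the constant fibrewise metric $g_0\in\calR^\psc_\p(D^d)_{h_0}$, say a torpedo metric. Applying the parametrised Gromov--Lawson surgery technique for families of cobordisms (in the form due to Walsh and to Ebert--Frenck), performed one handle at a time in order of increasing index and keeping the metric equal to the fixed product $h_0+dt^2$ on the untouched region $\p D^d\times I$, extends $g_0$ over the family of handle pairs to a fibrewise psc-metric $G$ on $E$ that is a Riemannian product near $E_0$ and near the ``top'' sub-bundle $E_1$. The codimension-$\ge 3$ bound established above is exactly what permits each of the two extension steps, while smoothness over the parameter sphere follows from the smoothness of the family of attaching spheres and framings --- here the geometric-cancellation clause of Theorem~\ref{thm:main1} provides enough control. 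Restricting $G$ to $E_1\to S^q$ then yields a fibrewise psc-metric on this $D^d$-bundle which equals the fixed product near the boundary sub-bundle, i.e.\ a fibrewise point of $\calR^\psc_\p(D^d)_{h_0}$.

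To finish, note that by the definition of the fibre sequence \eqref{eq:bdiff} the bundle $E_1\to S^q$ is classified by $\p_*\tilde\alpha=\alpha$. A fibrewise psc-metric on the $D^d$-bundle classified by $\alpha$ is the same datum as a lift of the map $\alpha:S^q\to B\Diff_\partial(D^d)$ along the projection $E\Diff_\partial(D^d)\times_{\Diff_\partial(D^d)}\calR^\psc_\p(D^d)_{h_0}\to\calM^\psc_\p(D^d)_{h_0}$, which is a homotopy equivalence since it is a fibre bundle with contractible fibre $E\Diff_\partial(D^d)$ (the $\Diff_\partial(D^d)$-action on $\calR^\psc_\p(D^d)_{h_0}$ being free). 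This lift is the required element of $\pi_q\calM^\psc_\p(D^d)_{h_0}\otimes\Q$ mapping to $\alpha$ under $\iota_*$, which proves the first assertion; the second then follows at once, a homotopy class with nontrivial image being nontrivial.

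The step I expect to be the main obstacle is the parametrised surgery itself: one has to perform the Gromov--Lawson construction for a smooth $S^q$-family of handle pairs on a manifold \emph{with corners}, subject to a prescribed product boundary condition along $\p D^d\times I$ that must be preserved throughout, and one must check that the two surgeries can be done in succession compatibly --- the psc-metric obtained on the level set between the $m$- and the $(m+1)$-handle must have, near the second attaching sphere, a form to which Gromov--Lawson applies. The geometric-cancellation statement of Theorem~\ref{thm:main1} is what keeps the family of handle attachments controlled enough for this to be carried out coherently over $S^q$; the remainder is the by-now-standard bookkeeping of family psc-surgery.
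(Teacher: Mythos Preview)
Your proposal is correct and follows essentially the same route as the paper. The paper's own proof deduces Theorem~\ref{thm:M} from Theorem~\ref{thm:C}, which in turn is proved by observing (Lemma~\ref{lemma-admis}) that the $h$-cobordism bundle $\Pi^\Gamma:W_h^\Gamma\to S^{k(d-3)}$ from Theorem~\ref{thm:main1} carries an admissible family of Morse functions with all indices $\leq d-2$ (i.e.\ codimension $\geq 3$, exactly your check), and then invoking \cite[Theorem~2.9]{BHSW} as a black box for the parametrised Gromov--Lawson step you spell out by hand; your ``main obstacle'' is precisely what that cited theorem handles.
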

\begin{Rem}
For $d\geq 6$, Theorem \ref{thm:M} follows also from \cite[Theorem
  F]{ERW}.  We give a geometrical proof of Theorem \ref{thm:C} which,
in particular, proves Theorem \ref{thm:M}. In fact, our proof
  proves a stronger statement for the existence of the lift in the
  moduli space. Namely, all classes given by surgery on trivalent
graphs are in the image of the induced map
$\iota_*:\pi_q\calM_\partial^\psc(X)_{h_0}\to \pi_qB\Diff_\partial(X)$
for an arbitrary smooth manifold $X$ of dimension $d\geq 4$ having a
psc metric $h_0$.  For $d=4$, Theorem \ref{thm:M} is in contrast
  to that the classes in $\pi_qB\Diff(X)$ detected by Seiberg--Witten
  theory do not admit fiberwise psc-metrics (\cite{Ru,Konn}).
\end{Rem}
Next, we fix some geometrical data. Consider the subset $ (D^d\times
\{0\})\cup (S^{d-1}\times I) \subset D^d\times I $ and fix a
psc-metric $g_0\in \calR^{\psc}(D^d\times \{0\})_{h_0}$.  We view the
cylinder $D^d\times I$ as a manifold with corners. Let $U$ be a colar
of $(D^d\times \{0\})\cup (S^{d-1}\times I) $; we assume that $U$ is
parametrized by $(x,t,s)$ near the corner $S^{d-1}\times \{0\}$, as it
it shown in Figure ~\ref{fig:fig01a}, where $x\in S^{d-1}\times
\{0\}$.
\begin{figure}[!htbp]
\setlength{\unitlength}{1mm}
  \begin{picture}(0,0)
    \put(-7,0){{\small $D^d\times \{1\}$}}
\put(-7,-38){{\small $t$}}
\put(-14,-37){{\small $s$}}
\put(-13,-41){{\small $x$}}
  \end{picture} 
\begin{center}
\includegraphics[height=1.5in]{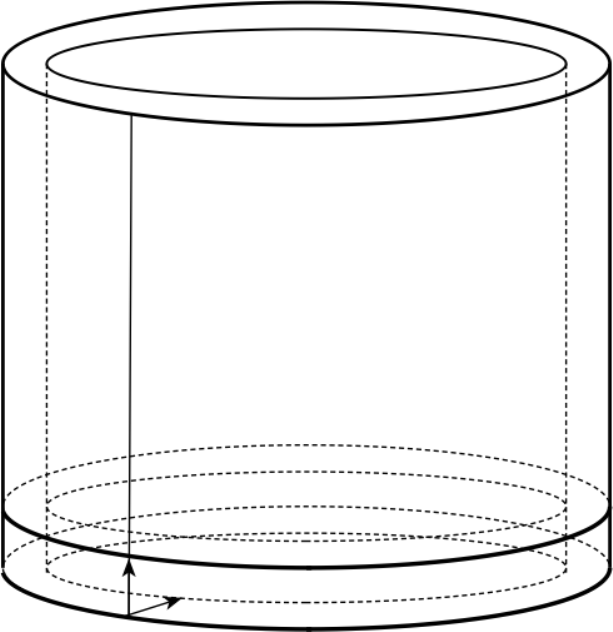}
\end{center}
\caption{A collar of $(D^d\times \{0\})\cup (S^{d-1}\times I) \to
D^d\times I$.}\label{fig:fig01a}
\end{figure}

\noindent
We consider a subspace $\calR_\sqcup(D^d\times I;g_0) \subset
\calR(D^d\times I)$ of Riemannian metrics $\bar g$ which restrict to
\begin{equation}\label{eq:01}
  \left\{
  \begin{array}{ll}
    g_0 + ds^2 & \mbox{near $D^{d}\times \{0\}$} \\ h_0+ds^2+dt^2 &
    \mbox{near $S^{d-1}\times I$} \\ g+ds^2 & \mbox{near $D^{d}\times
      \{1\}$ for some $g\in \calR(D^d\times \{1\})_{h_0}$}
  \end{array}\right.
\end{equation}
Let $\calR^\psc_\sqcup(D^d\times I;g_0)_{h_0}\subset
\calR_\sqcup(D^d\times I;g_0)_{h_0}$ be a corresponding subspace of
psc-metrics. 
Again, we notice that the group $\Diff_{\sqcup}(D^d\times I)$ acts
freely on a contractible space $\calR_\sqcup(D^d\times I;g_0)_{h_0}$.
In particular, we have homotopy equivalence
\begin{equation*}
  B\Diff_\sqcup (D^d\times I) \sim \calM_{\sqcup} (D^d\times I; g_0)_{h_0}
  := \calR_\sqcup(D^d\times I;g_0)_{h_0}/\Diff_\sqcup (D^d\times I) \ .
\end{equation*}
Again we have 
the following diagram of principal
$\Diff_\sqcup (D^d\times I)$-fiber bundles:
\begin{equation*}  
\begin{diagram}
   \setlength{\dgARROWLENGTH}{1.95em}
  \node{\calR^\psc_{\sqcup} (D^d\times I; g_0)_{h_0}}
          \arrow{e,t}{i}
          \arrow{s,b}{p}
  \node{\calR_{\sqcup} (D^d\times I; g_0)_{h_0}}
          \arrow{s,b}{\bar p}
  \\
  \node{\calM^\psc_{\sqcup} (D^d\times I; g_0)_{h_0}}
           \arrow{e,t}{\iota}
  \node{\calM_{\sqcup} (D^d\times I; g_0)_{h_0}}
\end{diagram}
\end{equation*}
We also notice that the restriction map
\begin{equation*}
  \calR^\psc_{\sqcup} (D^d\times I; g_0)_{h_0} \to \calR^\psc_{\p}(D^d)_{h_0},
  \ \ \ \bar g\mapsto
g=\bar g|_{D^d\times \{1\}}
\end{equation*}
where $g$ is given in (\ref{eq:01}), induces a map of corresponding
moduli spaces:
\begin{equation*}
  \partial^\psc:\calM^\psc_{\sqcup} (D^d\times I; g_0)_{h_0}\to
  \calM^\psc_{\p}(D^d)_{h_0}
\end{equation*}
\begin{Thm}\label{thm:C}
Let $d\geq 4$ be an integer.  All lifts in
$\pi_q\calM^\psc_{\p}(D^d)_{h_0}\otimes \Q$ found in Theorem~\ref{thm:M}
are in the image of the homomorphism
\begin{equation*}
  \partial^\psc_* : \pi_{k(d-3)}\calM^\psc_{\sqcup} (D^d\times I; g_0)_{h_0}\otimes\Q
  \to \pi_{k(d-3)} \calM^\psc_{\p}(D^d)_{h_0}\otimes\Q \ .
 \end{equation*} 
Hence, any nontrivial elements of $\pi_qB\Diff_\partial(D^d)\otimes\Q$
given by surgery on trivalent graphs lift to nontrivial elements of
$\pi_{k(d-3)}\calM^\psc_\sqcup(D^d\times I;g_0)_{h_0}\otimes\Q$.
\end{Thm}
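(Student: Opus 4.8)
The plan is to take the family of $h$-cobordisms produced by Theorem~\ref{thm:main1}, which comes with a very rigid fibrewise handle decomposition, and to equip it with a fibrewise positive scalar curvature metric by a parametrised Gromov--Lawson surgery construction. The decisive point is that the two handle indices are $m$ and $m+1$, so that both relevant surgeries have codimension at least $3$; this is exactly where the hypothesis $d\geq 4$ (and $d\geq 5$ in the odd case) enters.

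Fix a trivalent graph with $2k$ vertices and put $q=k(d-3)$. By Theorem~\ref{thm:main1} the corresponding class in $\pi_q B\Diff_{\p}(D^d)$ lifts to a class in $\pi_q B\Diff_{\sqcup}(D^d\times I)$ represented by a smooth $(D^d\times I)$-bundle $\pi\colon E\to S^q$ whose fibres carry a fibrewise decomposition $(D^d\times[0,\tfrac12])\cup h^m\cup h^{m+1}_b$, where the $m$-handle $h^m$ is independent of $b\in S^q$ and the $(m+1)$-handles $h^{m+1}_b$ form an $S^q$-family geometrically cancelling $h^m$. Since the handles are attached in the interior of $D^d$, the bundle $E$ is a trivial product over $S^q$ near $\p D^d\times I$, and its total space is fibrewise $D^d\times I$ rel $D^d\times\{0\}\cup\p D^d\times I$. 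Write $M_{1/2}$ for the (fixed) top of the $m$-handle trace. Over the $b$-independent piece $(D^d\times[0,\tfrac12])\cup h^m$, Walsh's version of the Gromov--Lawson handle-attachment theorem extends the product metric $g_0+ds^2$ --- keeping it unchanged near $D^d\times\{0\}$ and near $\p D^d\times I$ --- to a psc-metric $G_{1/2}$ on this trace, a product near those faces and restricting to a psc-metric $g_{1/2}$ on $M_{1/2}$; this is done once and for all.

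For the family part, each $(m+1)$-handle $h^{m+1}_b$ is attached to $M_{1/2}$ along a framed embedding $S^{m}\times D^{d-m}\hookrightarrow\mathrm{int}\,M_{1/2}$ depending smoothly on $b$, and $d-m\geq 3$, so this is codimension $\geq 3$ surgery data (this is the binding constraint that forces $d\geq 4$, resp. $d\geq 5$). Applying the parametrised Gromov--Lawson/Walsh surgery theorem --- the auxiliary data (deformation of $g_{1/2}$ to standard form near the attaching spheres, tube radii, bending profiles) lying in convex or contractible sets, hence selectable continuously over the compact base $S^q$ --- produces a fibrewise psc-metric $\bar g=\{\bar g_b\}$ on $E$. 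By construction $\bar g$ is $g_0+ds^2$ near $D^d\times\{0\}$, is $h_0+ds^2+dt^2$ near $S^{d-1}\times I$ (where we just retain the product metric), and restricts on $D^d\times\{1\}$ to a psc-metric $g_b\in\calR^{\psc}_{\p}(D^d)_{h_0}$, the boundary form being unchanged since the surgeries are interior. Thus $\bar g$ satisfies (\ref{eq:01}) and defines a class $[\bar g]\in\pi_q\calM^\psc_{\sqcup}(D^d\times I;g_0)_{h_0}\otimes\Q$.

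Finally, $\partial^\psc_*[\bar g]$ is represented by the $S^q$-family $\{g_b\}$ of top metrics. Forgetting positive scalar curvature sends $[\bar g]$ to the underlying $(D^d\times I)$-bundle of $E$, which by Theorem~\ref{thm:main1} is a lift of the graph class along the map $\p$ in (\ref{eq:bdiff}), and this forgetful map commutes with $\partial^\psc$ and the restriction $\p$; hence $\iota_*[\{g_b\}]$ is the graph class in $\pi_q B\Diff_{\p}(D^d)\otimes\Q$. Thus $\{g_b\}$ is the lift of Theorem~\ref{thm:M}, it lies in the image of $\partial^\psc_*$, and since by Theorem~\ref{thm:nontriv} the graph class is nonzero in $\pi_q B\Diff_{\p}(D^d)\otimes\Q$ (for $k\geq 2$, and for $k=1$ under the stated technical condition), both $\{g_b\}$ and $[\bar g]$ are nonzero in the respective rational homotopy groups. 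I expect the main obstacle to be precisely this parametrised surgery step: carrying out the Gromov--Lawson deformations and gluings simultaneously over all of $S^q$ while keeping $\bar g$ rigidly equal to the prescribed product metrics near $D^d\times\{0\}$ and $\p D^d\times I$, and verifying smoothness in $b$. Here it is essential that only the $(m+1)$-handle varies with $b$, the $m$-handle trace and the metric $g_{1/2}$ on $M_{1/2}$ being fixed.
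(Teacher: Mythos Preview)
Your argument is correct and is essentially the same as the paper's: the paper packages the parametrised Gromov--Lawson step by observing that the bundle $\Pi^\Gamma$ carries an admissible family of Morse functions (indices $\leq d-2$, i.e.\ handle codimension $\geq 3$) and then invoking \cite[Theorem~2.9]{BHSW}, which is precisely the parametrised surgery theorem you spell out by hand. Your more explicit description of the construction, including the observation that only the $(m+1)$-handle varies while the $m$-handle trace and the metric $g_{1/2}$ are fixed, is exactly what underlies that citation.
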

\subsection{Conventions}
\begin{itemize}
\item A {\it $(W,\partial W)$-bundle} is a smooth $W$-bundle with
  structure group $\Diff_\partial(W)$. We say that two $(W,\partial
  W)$-bundles $\pi_i:E_i\to B$ ($i=0,1$) are {\it concordant} if the
  disjoint union, the $(W,\partial W)$-bundle $\pi_0\sqcup
  \pi_1:E_0\sqcup E_1\to B\sqcup B$ over disjoint copies of $B$ is the
  restriction of a $(W,\partial W)$-bundle
  $\widetilde{\pi}:\widetilde{E}\to [0,1]\times B$ to
  $\widetilde{\pi}^{-1}(\{0,1\}\times B)$. If
    $W=X\times I$ for a compact manifold $X$, we denote by
    $\partial_{\sqcup}W = X\times \{0\}\cup \p X \times I$. Then a
  {\it $(W,\partial_{\sqcup}W)$-bundle} 
  is a smooth $W$-bundle with
  structure group $\Diff_\sqcup(W)$. Concordance between two
  $(W,\partial_\sqcup W)$-bundles is defined similarly as above.
\item A {\it framed embedding} (or a {\it framed link}) consists of an
  embedding $\varphi:S\to X$ between smooth manifolds and a choice of
  a normal framing $\tau$ on $\varphi(S)$, where by a normal framing
  we mean a trivialization $\nu(\varphi(S))\cong \varphi(S)\times
  \R^{\mathrm{codim}\,\varphi(S)}$ of the normal bundle.
\item We will often say ``a framed embedding $\varphi$'' or ``a framed
  link $\varphi$'', instead of $(\varphi,\tau)$.
\item We consider links as submanifolds equipped with
  parametrizations. Thus in this paper links are embeddings. Also, we
  assume that famlies of links are smoothly parametrized.
\item We will consider \emph{trivialities} of families or bundles in
  several different meanings. Instead of saying just ``trivial
  bundle'', we will say that a bundle/family is \emph{trivialized} if
  it is equipped with a trivialization. If it admits at least one
  trivialization, we say it is \emph{trivializable}. A given family
  $\{\varphi_s\}$ of some objects $\varphi_s$ is \emph{strictly
  trivial} if $\varphi_s$ does not depend on $s$, i.e.,
  $\varphi_s=\varphi_{s_0}$ for some $s_0$. {It seems usual to say a
    bundle is trivial if it is trivializable.}
\end{itemize}

\section{Graph surgery}\label{s:graph-surgery}
We take an embedding $\Gamma\to {\mathrm{Int}\,X}$ of a labeled,
edge-oriented trivalent graph $\Gamma$. We put a Hopf link of the
spheres $S^{d-2}$ and $S^1$ at the middle of each edge, as in
Figure~\ref{fig:G-to-Y-link}.
\begin{figure}[!htbp]
\begin{center}
\includegraphics{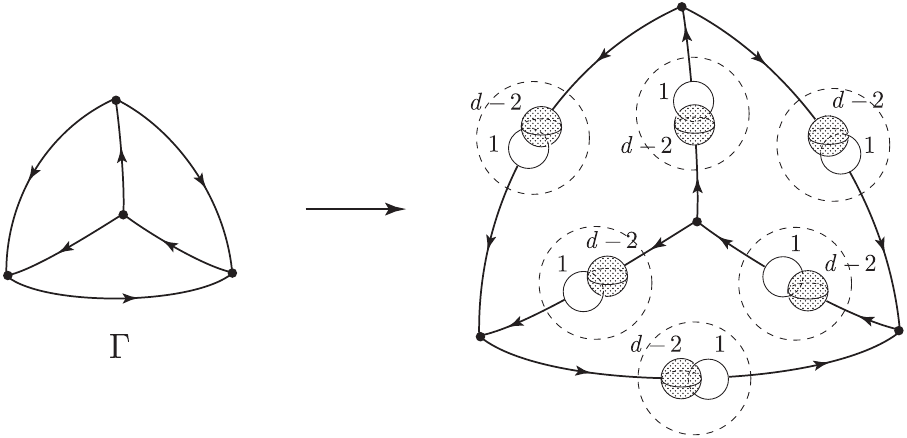}
\end{center}
\caption{Decomposition of embedded trivalent graph into Y-shaped
  pieces.}\label{fig:G-to-Y-link}
\end{figure}

\noindent
Then every vertex of $\Gamma$ gives a Y-shaped component
\emph{Y-graph} of Type I or and II, see Figure \ref{Y-graph-I-II}
below, i.e. an Y-graph is a vertex together with {framed} spheres
$S^{d-2}$ and $S^{1}$ {attached}.  We call the attached spheres {\it
  leaves} of a Y-graph.  This construction transforms the graph
$\Gamma$ into $2k$ components Y-graphs.
\begin{figure}[!htbp]
\begin{center}
\includegraphics[height=30mm]{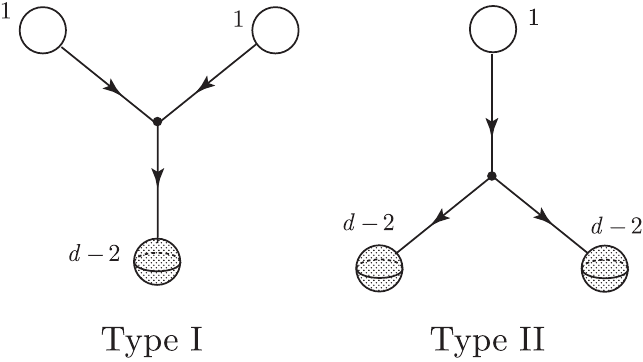}
\end{center}
\caption{Y-graphs of Type I and II}\label{Y-graph-I-II}
\end{figure}
We take small closed tubular neighborhoods of those Y-graphs, namely
the disjoint union of the $\ve$-tubular neighborhoods of the leaves
and the trivalent vertex (a point) connected by $\ve/2$-tubular
neighborhoods of the edges for some small $\ve$, and denote them by
$V^{(1)},V^{(2)},\ldots,V^{(2k)}$. They form a disjoint union of
handlebodies embedded in {$\mathrm{Int}\,X$}. A Type I Y-graph gives a
handlebody (of a Type I) which is diffeomorphic to the handlebody
obtained from {a} $d$-ball by attaching two 1-handles and one
$(d-2)$-handle in a standard way{, namely, along unknotted unlinked
  standard attaching spheres in the boundary of $D^d$}.  A Type II
Y-graph gives a handlebody (of a Type II) which is diffeomorphic to
the handlebody obtained from {a} $d$-ball by attaching one 1-handle
and two $(d-2)$-handles in a standard way.

Let $V=V^{(i)}$ be one of the Type I handlebodies and let
$\alpha_{\mathrm{I}}: S^0\to \Diff(\partial V)$, $S^0=\{-1,1\}$, be
the map defined by $\alpha_{\mathrm{I}}(-1)=\mathbbm{1}$, and by
setting $\alpha_{\mathrm{I}}(1)$ as the ``Borromean twist"
corresponding to the Borromean string link $D^{d-2}\cup D^{d-2}\cup
D^1\to D^d$. The detailed definition of $\alpha_{\mathrm{I}}$ can be
found in \cite[\S{4.5}]{Wa18b}.

Let $V=V^{(i)}$ be one of the Type II handlebodies and let
$\alpha_{\mathrm{II}}:S^{d-3}\to \Diff(\partial V)$ be the map defined
by comparing the {trivializations of} the family of complements
of an $S^{d-3}$-family of embeddings $D^{d-2}\cup D^1\cup
D^1\hookrightarrow D^d$ obtained by parametrizing the second component
in the Borromean string link $D^{d-2}\cup D^{d-2}\cup
D^1\hookrightarrow D^d$ with {that of} the trivial family {of
  $\partial V$}. The detailed definition of $\alpha_{\mathrm{II}}$ can
be found in \cite[\S{4.6}]{Wa18b}.

\begin{figure}[!htbp]
\setlength{\unitlength}{1mm}
  \begin{picture}(0,0)
    \put(-3,-5){{\small $\R^{d-2}$}}
    \put(28,-25){\textcolor{blue}{\small $D^{d-2}$}}
    \put(34,-48){{\small $\R^{1}$}}
    \put(34,-38){{\small $D^{1}$}}
 \put(-38,-41){{\small $\R^{1}$}}
    \put(-25,-14){\textcolor{red}{\small $D^{d-2}$}}
  \end{picture} 
\begin{center}
\includegraphics[height=60mm]{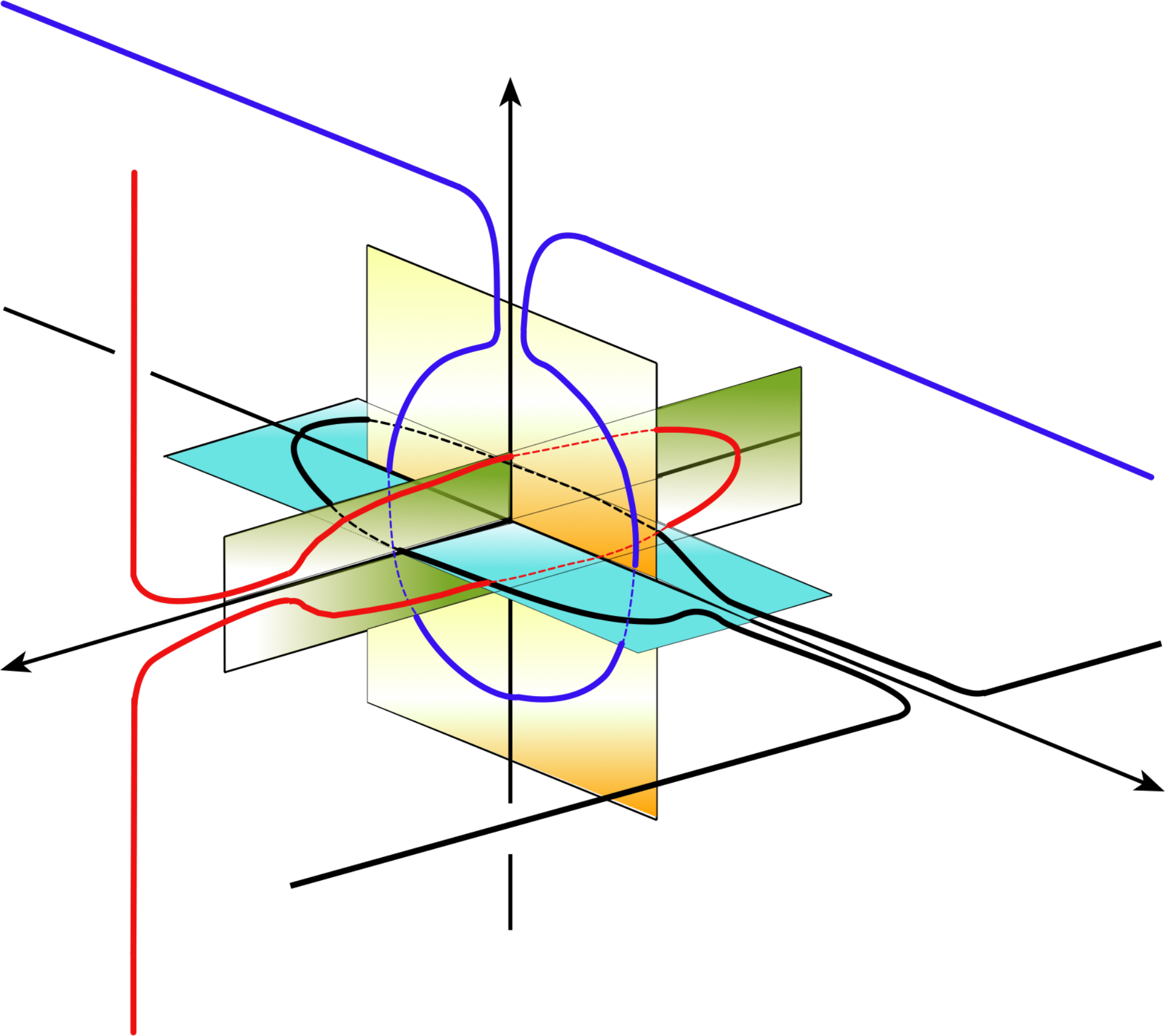}
\end{center}
  \caption{Borromean string link $D^{d-2}\cup D^{d-2}\cup
    D^1\hookrightarrow D^d$}\label{fig111a}
\end{figure}
The Borromean string link has the following important property, which
will be used later.
\begin{Property}\label{propt:brunnian}
If one of the three components in the Borromean string link
$D^{d-2}\cup D^{d-2}\cup D^1\hookrightarrow D^d$ is deleted, then the
string link given by the remaining two components is isotopic relative
to the boundary to the standard inclusion of disks.
\end{Property}
For each $i$-th vertex of $\Gamma$ we let $K_i=S^0$ or $S^{d-3}$
depending on whether this vertex is of Type I or II.  Accordingly, let
$\alpha_i:K_i\to \Diff(\partial V^{(i)})$ be $\alpha_{\mathrm{I}}$ or
$\alpha_{\mathrm{II}}$. Let $B_\Gamma=K_1\times \cdots\times K_{2k}$.
By using the families of twists above, we define
\begin{equation*}
E^\Gamma=(B_\Gamma\times ({X}-\mathrm{Int}\,(V^{(1)}\cup\cdots\cup V^{(2k)})))\cup_\partial (B_\Gamma\times (V^{(1)}\cup\cdots\cup V^{(2k)})),
\end{equation*}
where the gluing map is given by
\begin{equation*}
\begin{split}
&\psi:B_\Gamma\times(\partial V^{(1)}\cup\cdots\cup\partial V^{(2k)})
  \to B_\Gamma\times(\partial V^{(1)}\cup\cdots\cup\partial V^{(2k)})\\
 &\psi(t_1,\ldots,t_{2k},x)=(t_1,\ldots,t_{2k},\alpha_i(t_i)(x))
  \quad \mbox{(for $x\in \partial V^{(i)}$)}.
\end{split}
\end{equation*}
\begin{Prop}[{\cite{Wa18b}}]\label{prop:v-framing}
Let $X$ be a $d$-dimensional compact manifold having a framing
$\tau_0$. The natural projection $\pi^\Gamma:E^\Gamma\to B_\Gamma$ is
{an $(X,\partial)$}-bundle, and it admits a vertical framing that is
compatible with the surgery and that agrees with $\tau_0$ near the
boundary, and it gives an element of
\begin{equation*}
  {\Omega_{(d-3)k}^{SO}(\widetilde{B\Diff}(X,\partial)),}
\end{equation*}
where $\widetilde{B\Diff}(X,\partial)$ is the classifying space for
framed $(X,\partial)$-bundles.
\end{Prop}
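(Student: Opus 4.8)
The plan is to verify the three assertions in turn: that $\pi^\Gamma$ is a locally trivial bundle with structure group $\Diff_\partial(X)$, that the ambient framing $\tau_0$ propagates to a vertical framing of the family, and that a framed $(X,\partial)$-bundle over the closed oriented manifold $B_\Gamma$ determines a class in $\Omega_{(d-3)k}^{SO}(\widetilde{B\Diff}(X,\partial))$; the substance lies in the first two, both of which I would reduce to the behavior of the Borromean twists $\alpha_{\mathrm I},\alpha_{\mathrm{II}}$ from \cite[\S{4}]{Wa18b}. For the bundle structure, set $X_{\mathrm{out}}:=X-\mathrm{Int}(V^{(1)}\cup\cdots\cup V^{(2k)})$ and $V:=V^{(1)}\cup\cdots\cup V^{(2k)}$, so that $E^\Gamma$ is obtained by gluing the product $B_\Gamma\times X_{\mathrm{out}}$ to $B_\Gamma\times V$ along $B_\Gamma\times\partial V$ by the fibrewise diffeomorphism $\psi$, which over $t=(t_1,\dots,t_{2k})$ is $\bigsqcup_i\alpha_i(t_i)$. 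Since $\psi$ is smooth and commutes with the projections to $B_\Gamma$, the space $E^\Gamma$ is a smooth manifold and $\pi^\Gamma$ a proper submersion, hence by Ehresmann's theorem a fiber bundle over the closed manifold $B_\Gamma$; local triviality is automatic. To identify the fiber, note that over $t$ it is $X_{\mathrm{out}}\cup_{\psi(t,\cdot)}V$, and since the three leaves of each single Y-graph lie in pairwise disjoint balls of $X$, this regluing can be undone ambiently; this is the point where Property~\ref{propt:brunnian} and the explicit description of the Borromean string link are used, as in \cite[\S{4}]{Wa18b}, and it gives that the fiber is diffeomorphic to $X$ relative to a neighborhood of $\partial X$. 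Finally $\psi$ is the identity off $B_\Gamma\times V$ and $V\subset\mathrm{Int}\,X$, so the structure group reduces to $\Diff_\partial(X)$ and $\pi^\Gamma$ is an $(X,\partial)$-bundle.

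For the vertical framing, observe that the vertical tangent bundle of $E^\Gamma$ is $\mathrm{pr}^*TX_{\mathrm{out}}$ over the first chart and $\mathrm{pr}^*TV$ over the second, and pulling back $\tau_0$ frames each of them. These local framings glue to a vertical framing of $E^\Gamma$ exactly when the fibrewise derivative $d\psi$ along $B_\Gamma\times\partial V$ intertwines the two pulled-back framings up to a chosen homotopy of framings; equivalently, when each family $t_i\mapsto d\alpha_i(t_i)$ of stabilized bundle automorphisms of $T\partial V^{(i)}$ is equipped with a null-homotopy rel the basepoint of $K_i$. I would extract this data from the definitions of $\alpha_{\mathrm I}$ and $\alpha_{\mathrm{II}}$ themselves: they are built from (parametrized) complements of the \emph{framed} Borromean string link $D^{d-2}\cup D^{d-2}\cup D^1\hookrightarrow D^d$, and the comparison of trivializations defining them is made at the level of framed data (see \cite[\S{4.5} and \S{4.6}]{Wa18b}), which supplies exactly the homotopies needed. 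Gluing the local framings along them gives a vertical framing on $E^\Gamma$; since $\psi$ is the identity near $\partial X$, this framing agrees there with $\tau_0$, and it is by construction the one compatible with the surgery.

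For the last assertion, a framed $(X,\partial)$-bundle is classified by a map to $\widetilde{B\Diff}(X,\partial)$, so the first two steps give $f^\Gamma:B_\Gamma\to\widetilde{B\Diff}(X,\partial)$; it remains to see that $B_\Gamma$ is a closed oriented manifold of the right dimension. With the admissible edge-orientation of $\Gamma$, every vertex is of Type I or II, and counting the $S^1$-leaves, of which there is one per edge and hence $3k$ in all, against $|V(\Gamma)|=2k$ forces exactly $k$ vertices of Type I and $k$ of Type II. Thus $K_i=S^0$ for $k$ of the indices and $K_i=S^{d-3}$ for the other $k$, so $\dim B_\Gamma=k(d-3)$, the two-point factors $S^0$ being taken as signed points consistently with $\alpha_i(-1)=\mathbbm{1}$. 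Therefore $f^\Gamma_*[B_\Gamma]\in\Omega_{(d-3)k}^{SO}(\widetilde{B\Diff}(X,\partial))$ is the asserted element, represented by the framed bundle $\pi^\Gamma$.

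The main obstacle will be the two genuinely geometric inputs used above: that each single regluing returns the ambient manifold $X$, so that $\pi^\Gamma$ really has fiber $X$, which rests on the Brunnian Property~\ref{propt:brunnian}; and that the Borromean twists are compatible with a framing, so that $\tau_0$ extends over the whole family. Both are exactly the content of the constructions of $\alpha_{\mathrm I}$ and $\alpha_{\mathrm{II}}$ in \cite[\S{4.5} and \S{4.6}]{Wa18b}; once these are granted, the rest of the proof is the bookkeeping of the clutching, the framings, and the orientations carried out above, with no further difficulty.
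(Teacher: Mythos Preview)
The paper does not give its own proof of this proposition; it is simply quoted from \cite{Wa18b}, so there is no argument in the present paper to compare against. Your outline is a faithful reconstruction of what one would extract from \cite[\S4]{Wa18b}, and your verification that $\dim B_\Gamma=k(d-3)$---by counting $S^1$-leaves to force exactly $k$ Type~I and $k$ Type~II vertices---is correct and a useful addition.

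One point should be sharpened. Your reason for each fibre $X_{\mathrm{out}}\cup_{\psi(t,\cdot)}V$ being diffeomorphic to $X$ (``the three leaves lie in pairwise disjoint balls, so the regluing can be undone ambiently'') is not the mechanism at work, and Property~\ref{propt:brunnian} is not the relevant input here: the Brunnian property says that \emph{deleting} a component of the Borromean link trivialises the rest, and in this paper it is invoked only later, in Lemmas~\ref{lem:Y-surgery-link-I}(2) and \ref{lem:null-leaf-I}. The correct reason the fibre is $X$ is built into the very definition of $\alpha_{\mathrm I},\alpha_{\mathrm{II}}$ in \cite[\S4]{Wa18b}: each Borromean twist arises as the boundary restriction of an abstract diffeomorphism between the complement of the (parametrised) Borromean string link in a ball and the standard handlebody $V^{(i)}$, and a boundary diffeomorphism that extends over the handlebody leaves the reglued manifold unchanged. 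Equivalently, in the framed-link reformulation of \S\ref{s:Ysurg-link} the Y-surgery is surgery along a disjoint union of Hopf links, and Hopf-link surgery returns the same manifold. With that correction, and granting the framing compatibility of the twists from \cite{Wa18b} as you do, the remaining clutching and orientation bookkeeping goes through as you describe.
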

The following is a most precise statement
of our first main result, Theorem 1.3.
\begin{Thm}\label{thm:bordism-simplify}
Let $d\geq 4$ be an integer. Let $(m_1,m_2)=(1,d-2)$ if $d$ is even
and let $m_1=m_2=(d-1)/2$ if $d$ is odd.
\begin{enumerate}
\item The {$(X,\partial)$}-bundle $\pi^\Gamma:E^\Gamma\to B_\Gamma$
  for an embedding $\phi:\Gamma\to \mathrm{Int}\,{X}$ is related by
  {an $(X,\partial)$}-bundle bordism to {an $(X,\partial)$}-bundle
  $\varpi^\Gamma:\overline{E}^\Gamma\to S^{k(d-3)}$ obtained from the
  {product bundle $S^{k(d-3)}\times X \to S^{k(d-3)}$} by fiberwise
  {surgeries} along a $S^{k(d-3)}$-family of framed links
  $h_s:S^{m_1}\cup S^{m_2}\to \mathrm{Int}\,{X}$, $s\in S^{k(d-3)}$,
  that satisfies the following conditions:
\begin{enumerate}
\item $h_s$ is isotopic to the Hopf link for each $s$.
\item The restriction of $h_s$ to $S^{m_2}$ is a constant $S^{k(d-3)}$-family.
\item There is a small neighborhood $N$ of $\mathrm{Im}\,\phi$ such
  that the image of $h_s$ is included in $N$ for all $s\in
  S^{k(d-3)}$.
\end{enumerate}
\item There exists an $(X\times I,\partial_\sqcup
  (X\times I))$-bundle $\Pi^\Gamma: W^\Gamma_h\to S^{k(d-3)}$ such
  that
\begin{enumerate}
\item the fiberwise restriction of $\Pi^\Gamma$ to {$X\times \{1\}$}
  is $\varpi^\Gamma$,
\item the manifold $W^\Gamma_h$ is obtained by attaching
  $S^{k(d-3)}$-families of $(d+1)$-dimensional $m_1$- and
  $(m_1+1)$-handles to the product {$(X\times I)$}-bundle
  {$S^{k(d-3)}\times (X\times I)\to S^{k(d-3)}$} at
  {$S^{k(d-3)}\times (X\times\{1\})$}.
\end{enumerate}
\end{enumerate}
\end{Thm}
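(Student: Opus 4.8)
The argument runs in two stages matching the two parts of the statement; the reduction of the base to a sphere in part~(1) follows \cite{Wa18b}, and the new content is the description of the simplified bundle by surgery on a framed Hopf link together with its lift to an $h$-cobordism bundle.

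\emph{Part (1).} First I would reduce $\pi^\Gamma:E^\Gamma\to B_\Gamma$, $B_\Gamma=K_1\times\cdots\times K_{2k}$, to a bundle over $S^{k(d-3)}$. The mechanism is Property~\ref{propt:brunnian}: since $\alpha_{\mathrm I}(-1)=\mathbbm 1$ and $\alpha_{\mathrm{II}}$ is the identity at the basepoint, and since deleting one component of the Borromean string link leaves a standard link, the framed $(X,\partial)$-bundle $\pi^\Gamma$ of Proposition~\ref{prop:v-framing} is trivialized whenever one of the parameters $t_i$ sits at its basepoint. Restricting the $S^0$-parameters to $+1$ gives a bundle over $(S^{d-3})^{k}$ that is trivial over the subproducts where one $S^{d-3}$-coordinate is at the basepoint; a standard argument with these triviality properties (using the degree-one collapse $(S^{d-3})^{k}\to S^{k(d-3)}$) then provides an $(X,\partial)$-bundle $\varpi^\Gamma:\overline E^\Gamma\to S^{k(d-3)}$ together with the required $(X,\partial)$-bundle bordism from $E^\Gamma$. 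Next I would make $\overline E^\Gamma$ explicit: the definitions of $\alpha_{\mathrm I},\alpha_{\mathrm{II}}$ in \cite[\S4.5--4.6]{Wa18b} exhibit the regluing inside each handlebody $V^{(i)}$ as a fibrewise spherical modification along the leaves of the $i$-th Y-graph, so after the base reduction $\overline E^\Gamma$ is obtained from $S^{k(d-3)}\times X$ by fibrewise surgery along an $S^{k(d-3)}$-family of framed links $h_s:S^{m_1}\cup S^{m_2}\to\mathrm{Int}\,X$ (the identification of the dimensions $m_1,m_2$ with the strands of the Borromean link is part of unwinding $\alpha_{\mathrm I},\alpha_{\mathrm{II}}$, and gives $m_1+m_2=d-1$). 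Property~\ref{propt:brunnian} then yields conditions (a) and (b): deleting a component of the Borromean link makes the remaining strands standard, so $h_s$ is fibrewise isotopic to the Hopf link and the $S^{m_2}$-component can be isotoped off the family to a constant; condition (c) is immediate since everything lives in the tubular neighbourhoods $V^{(i)}\subset N$ of $\mathrm{Im}\,\phi$.

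\emph{Part (2).} Since $m_1+m_2=d-1$, a framed Hopf link $S^{m_1}\cup S^{m_2}$ is the attaching data of a geometrically cancelling handle pair, and surgery on both components returns $X$; thus $\varpi^\Gamma$ is fibrewise diffeomorphic to $X$, carrying non-trivial information only through its family structure. Taking the trace of the fibrewise surgery along $h_s$ produces an $(X\times I,\partial_\sqcup(X\times I))$-bundle $\Pi^\Gamma:W^\Gamma_h\to S^{k(d-3)}$ whose fibrewise restriction to $X\times\{1\}$ is $\varpi^\Gamma$, giving (2)(a). For (2)(b) I would realize the surgeries as handle attachments on $S^{k(d-3)}\times(X\times I)$ along $S^{k(d-3)}\times(X\times\{1\})$: the constant $S^{m_2}$-family contributes a single $s$-independent handle which, turned upside down and combined with constancy, can be organised with the $S^{m_1}$-family into a constant $m_1$-handle followed by an $S^{k(d-3)}$-family of $(m_1+1)$-handles whose attaching spheres run once over the belt sphere of the $m_1$-handle. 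Fibrewise this is a cancelling $(m_1,m_1+1)$-pair — up to an isotopy depending on $s\in S^{k(d-3)}$ — which simultaneously identifies the fibre with $X\times I$ and, through the variation of that isotopy, records the pseudoisotopy class; this is the ``single lens'' Cerf picture of the Remark after Theorem~\ref{thm:main1}.

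\emph{Main obstacle.} The delicate part is the second half of Part~(1): converting the definitions of the Borromean twists $\alpha_{\mathrm I},\alpha_{\mathrm{II}}$ into a surgery along an honest framed link and tracking the normal framings carefully enough that $h_s$ is genuinely framed-isotopic to the Hopf link with a constant $S^{m_2}$-component. Property~\ref{propt:brunnian} handles the underlying embeddings, but the framing bookkeeping — and, for $d$ odd, the behaviour of the two sphere families in the middle dimension — is where the real work lies; by comparison the base reduction and the handle reorganisation in Part~(2) are formal once the framed-link picture is in hand.
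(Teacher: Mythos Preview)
Your outline for Part~(2) is essentially right and matches the paper: once a single Hopf-link family $h_s$ is in hand, the constant $S^{m_2}$-component gives a constant $(m_2+1)$-handle on the trace, and since attaching an $(m_2+1)$-handle along an unknotted $S^{m_2}$ and attaching an $m_1$-handle along $S^{m_1-1}$ yield the same top boundary $X\#(S^{m_2+1}\times S^{m_1})$, one may swap the constant handle for an $m_1$-handle and then attach the varying $(m_1+1)$-handles along $h_s|_{S^{m_1}}$.

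The real gap is in Part~(1). You correctly isolate the base reduction $B_\Gamma\to S^{k(d-3)}$ via Property~\ref{propt:brunnian}, but you then assert that unwinding $\alpha_{\mathrm I},\alpha_{\mathrm{II}}$ directly exhibits $\overline E^\Gamma$ as surgery on a \emph{two-component} family $h_s:S^{m_1}\cup S^{m_2}\to X$. This is the heart of the theorem, and it does not follow from the definitions: each Y-graph has three leaves, and the framed-link description of a single Y-surgery already involves a \emph{six}-component link (three Hopf links). For the whole graph one gets $12k$ components. Reducing this to a single Hopf link is not framing bookkeeping; it requires a specific mechanism that your proposal does not supply.

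The paper's mechanism is: (i) reinterpret each Y-surgery as surgery on three framed Hopf links (Definitions~\ref{def:type-I}, \ref{def:type-II}) and prove a basic ``Hopf link surgery'' concordance move (Lemma~\ref{lem:Hopf-surgery}) showing that surgery on $K\cup L$ linked to $c_1\cup c_2$ is concordant to a \emph{small} Hopf link together with $c_1'\cup c_2'$ obtained by band-summing; (ii) insert an auxiliary Hopf chain $c_1\cup c_2\cup c_3\cup c_4$ on one edge of $\Gamma$, so that the Y-link becomes attached to a genuine two-component link $L=c_2\cup c_3$; (iii) by an induction on the number of trivalent vertices (Proposition~\ref{prop:family-emb}), repeatedly apply the Hopf-link move to trade all Y-surgeries for small (hence trivial) Hopf links at the cost of deforming $L$ through a $B_\Gamma$-family of embeddings; (iv) this family of embeddings of the single Hopf link $c_2\cup c_3$ \emph{is} $h_s$. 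Only after this do the base-reduction and null-leaf arguments (Lemmas~\ref{lem:null-leaf-I}, \ref{lem:family-emb-bordism}) collapse $B_\Gamma$ to $S^{k(d-3)}$. Without the Hopf-chain insertion and the inductive concordance of Proposition~\ref{prop:family-emb}, there is no candidate for $h_s$ at all, and Property~\ref{propt:brunnian} by itself cannot produce one.
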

For concreteness, we prove
  Theorem~\ref{thm:bordism-simplify} in the case of even $d$ in
  Corollary~\ref{cor:bordism} and
  Proposition~\ref{prop:pseudoisotopy}. The case of odd $d$ is
  completely analogous, with the following replacements:
\begin{itemize}
\item Y-surgery is given by an $S^{m-1}$-family of embeddings $D^m\cup
  D^m\cup D^m\to D^{2m+1}$ obtained by parametrizing a Borromean
  string link $ D^{2m-1}\cup D^m\cup D^m\to D^{2m+1}.$ We take this at
  each trivalent vertex, so $B_\Gamma=S^{m-1}\times
  S^{m-1}\times\cdots\times S^{m-1}$ ($2k$ factors).
\item In the proof of an analogue of
  Proposition~\ref{prop:pseudoisotopy}, we replace a trivial family of
  $(m+1)$-handles with that of $m$-handles.
\end{itemize}

\section{Alternative definition of Y-surgery by
  framed links}\label{s:Ysurg-link}
\subsection{Framed link for Type I surgery}
Let $d\geq 4$. Let $K_1,K_2,K_3$ be the unknotted spheres {in
  $\mathrm{Int}\,X$} that are parallel to the cores of the handles of
Type I handlebody $V$ of indices 1,1,$d-2$, respectively. Let $c_i$ be
a small unknotted sphere in $\mathrm{Int}\,V$ that links with $K_i$
with the linking number 1. Let $L_1'\cup L_2'\cup L_3'$ be a Borromean
rings of dimensions $d-2,d-2,1$ embedded in a small ball in
$\mathrm{Int}\,V$ that is disjoint from
\begin{equation*}
K_1\cup K_2\cup K_3\cup
c_1\cup c_2\cup c_3.
\end{equation*}
For each $i=1,2,3$, let $L_i$ be a knotted sphere in $\mathrm{Int}\,V$
obtained by connect summing $c_i$ and $L_i'$ along an embedded arc
that is disjoint from the cocores of the 1-handles and from other
components, so that $L_i$'s are mutually disjoint. Then $K_i\cup L_i$
is a Hopf link in $d$-dimension. If $K_i$ is null in $X$ for
$i=1,2,3$, namely, $K_i$ bounds an embedded disk in $X$, then each
component of the six component link $\bigcup_{i=1}^3 (K_i\cup L_i)$ is
an unknot in $X$, and we may consider it as a framed link by canonical
framings {induced from the standard sphere by the isotopies along the
  spanning disks} (Figure~\ref{fig:theta-surgery}, $V^{(1)}$). The
following is a framed link definition of Type I surgery.
\begin{Def}[Y-surgery of Type I]\label{def:type-I}
We define the Type I surgery on $V$ to be the surgery along the six
component framed link $\bigcup_{i=1}^3 (K_i\cup L_i)$ in $V$.
\end{Def}
We will see in section \ref{ss:Y-type-I} (Remark~\ref{rem:type-I})
that this definition is equivalent to that we have given in section
\ref{s:graph-surgery}. For $d=3$, this equivalence was proven by
S.~Matveev in \cite{Mat}. Our proof is an analogue of \cite[\S{2}]{Ha}
or \cite[Lemma~2.1]{GGP}.

\subsection{Family of framed links for Type II surgery}
Similarly, let $K_1,K_2,K_3$ be the unknotted spheres {in
  $\mathrm{Int}\,X$} that are parallel to the cores of the handles of
Type II handlebody $V$ of indices 1,$d-2$,$d-2$, respectively. Let
$c_i$ be a small {framed} unknotted sphere in $\mathrm{Int}\,V$ that
links with $K_i$ with the linking number 1. Let $L_{1,s}'\cup
L_{2,s}'\cup L_{3,s}'$ ($s\in S^{d-3}$) be a $(d-3)$-parameter family
of three component {framed} links of dimensions $d-2,1,1$ with only
{(isotopically)} unknotted components embedded in a small ball in
$\mathrm{Int}\,V$ disjoint from $K_1\cup K_2\cup K_3\cup c_1\cup
c_2\cup c_3$ such that $L_{1,s}', L_{3,s}'$ are unknotted components
in $V$ that do not depend on $s$, and the union of the locus of
$L_{2,s}'$ and $L_{1,s}'\cup L_{3,s}'$ forms a closure of the
Borromean string link of dimensions $d-2,d-2,1$.

For each $i=1,2,3$, let $L_{i,s}$ be a knotted sphere in
$\mathrm{Int}\,V$ obtained by connect summing $c_i$ and $L_{i,s}'$
along an embedded arc that is disjoint from the cocores of the
1-handles and from other components, so that $L_{i,s}$'s are mutually
disjoint. Then $K_i\cup L_{i,s}$ is a Hopf link in $d$-dimension. If
$K_i$ is null in $X$ for $i=1,2,3$, then each component of the six
component link $\bigcup_{i=1}^3 (K_i\cup L_{i,s})$ is fiberwise
isotopic to a constant family of an unknot in $X$, and we may consider
it as a family of framed links by canonical framings
(Figure~\ref{fig:theta-surgery}, $V^{(2)}$). The following is a framed
link definition of Type II surgery.
\begin{figure}[!htbp]
\begin{center}
\includegraphics{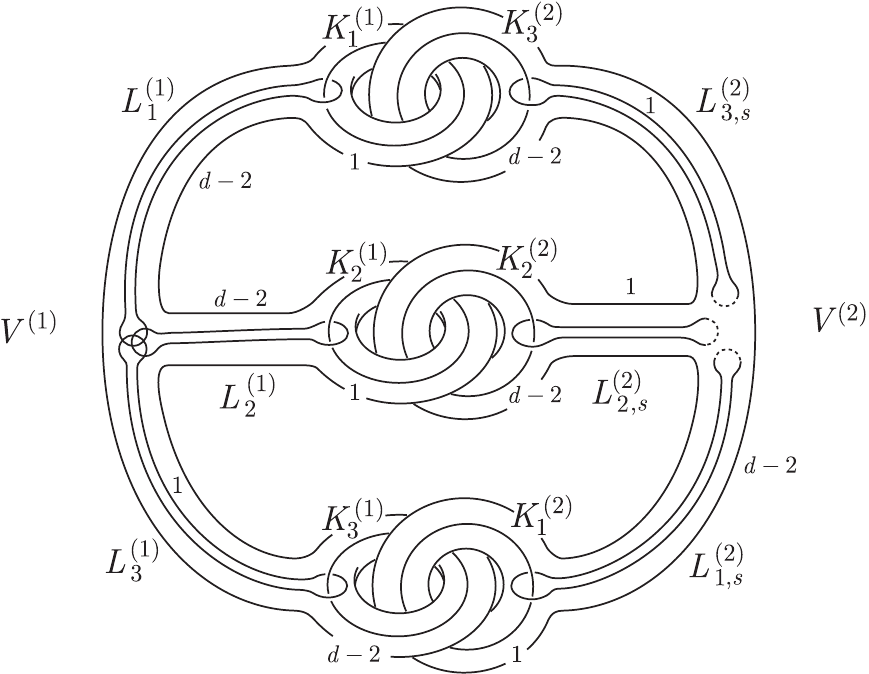}
\end{center}
\caption{Framed link for $\Theta$-graph surgery}\label{fig:theta-surgery}
\end{figure}
\begin{Def}[Y-surgery of Type II]\label{def:type-II}
We define the Type II surgery on $V$ to be the $S^{d-3}$-family of
surgeries along the family of the six component framed link
$\bigcup_{i=1}^3 (K_i\cup L_{i,s})$, $s\in S^{d-3}$, in $V$, which
produces a $(V,\partial)$-bundle over $S^{d-3}$.
\end{Def}
We will see in section \ref{ss:Y-type-II}
(Remark~\ref{rem:type-II}) that this definition is equivalent to that
we have given in section \ref{s:graph-surgery}.

\subsection{Hopf link surgery for links}
We would like to describe the effect of a Y-surgery of Type I or II
when a link in the complement of the Y-graph is present. Since a
Y-surgery consists of surgeries of three Hopf links, we shall first
consider the effect of a single Hopf link surgery.

\subsubsection{Surgery of $X$ on a framed link $L$}\label{ss:link-surgery}
We shall recall the definition of surgery on a framed link $L$ in
$\mathrm{Int}\,X$. Let $W=X\times I$ and let $W^L$ be a
$(d+1)$-dimensional cobordism obtained from $W$ by attaching disjoint
handles along $L\times\{1\}$ in $X\times\{1\}$. In more detail, for
each framed embedding $\ell:S^i\hookrightarrow L\times\{1\}$, we
attach $(d+1)$-dimensional $(i+1)$-handle along a small tubular
neighborhood of $\ell$. The handle attachments can be done disjointly
and simultaneously, and gives a $(d+1)$-dimensional cobordism $W^L$
between $X\times\{0\}$ and some $d$-manifold $X^L$. We say that $W^L$
is obtained from $X\times I$ by surgery along $L$, or by attaching
handles along $L$.  Let $\partial_\sqcup W^L=X\times\{0\}\cup \partial
X\times I$, $\partial_-W^L=X\times\{0\}$, and $\partial_+W^L=\partial
W^L\setminus\mathrm{Int}\,\partial_\sqcup W^L=X^L$.
\begin{figure}[!htbp]
\begin{center}
\includegraphics[height=30mm]{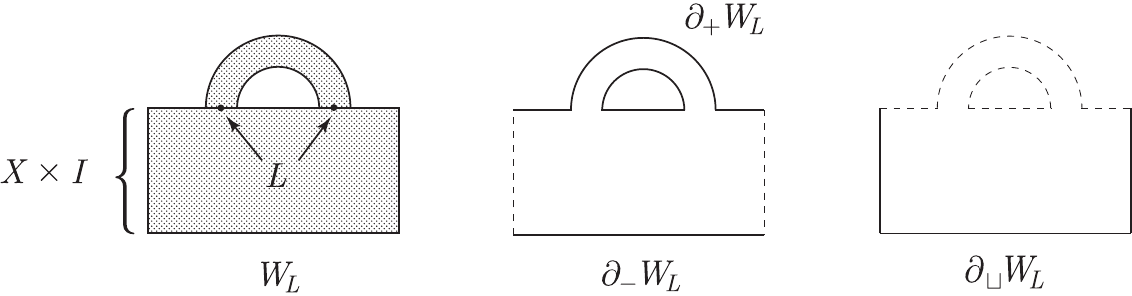}
\end{center}
\caption{Manifold $W^L$}\label{fig:handles}
\end{figure}
Here, $W$ may be more general cobordism. Namely, let $W$ be a
  relative cobordism between $\partial_-W=X\times\{0\}$ and some
  another manifold $\partial_+ W$ such that
\begin{equation}\label{eq:d-cobordism}
  \partial W=\partial_+W\cup_{\partial
  X\times\{1\}} (\partial X\times I)\cup_{\partial X\times\{0\}}
  \partial_-W.
\end{equation}
    Let $\partial_\sqcup W=(\partial X\times I)\cup_{\partial
      X\times\{0\}} \partial_-W$. For a framed link $L$ in
    $\partial_+W$, we define the manifold $W^L$ as a
    $(d+1)$-dimensional cobordism obtained from $W$ by attaching
    disjoint handles along $L$ in $\partial_+W$. A {\it
      $(W,\partial_\sqcup W)$-bundle} is defined as a $W$-bundle with
    structure group $\Diff(W,\partial_\sqcup W)$. Concordance between
    two $(W,\partial_\sqcup W)$-bundles can be defined similarly as for
    $W=X\times I$.

\subsubsection{Concordance of a cobordism}
When a link $c$ in $X$ is present, surgery on a framed link $L$ in
$X\setminus c$ changes the pair $(X,c)$. It may happen that surgeries
for two choices $(c,L)$ and $(c',L')$ of the links in $X$ should be
considered equivalent. Here, we consider the notion of concordance
between two such data, defined as follows.
\begin{Def}\label{def:concordance1}
Let $W$ be a relative cobordisms between
  $\partial_-W=X\times\{0\}$ and $\partial_+ W$ satisfying
  (\ref{eq:d-cobordism}) above, and let $W'$ be another such relative
  cobordism such that $W$ and $W'$ are concordant as
  $(W,\partial_\sqcup W)$-bundles over points. For framed links $L$
and $L'$ in $\partial_+W$ and $\partial_+W'$, respectively, we say
that the pairs $(W,L)$ and $(W',L')$ are {\it concordant} if $L\sqcup
L'$ is the restriction of a trivialized (fiberwise normally) framed
subbundle $\widetilde{L}$ of the top boundary of a concordance between
the cobordisms $W$ and $W'$.
\end{Def}
\begin{figure}[!htbp]
\begin{equation*}
\includegraphics[height=40mm]{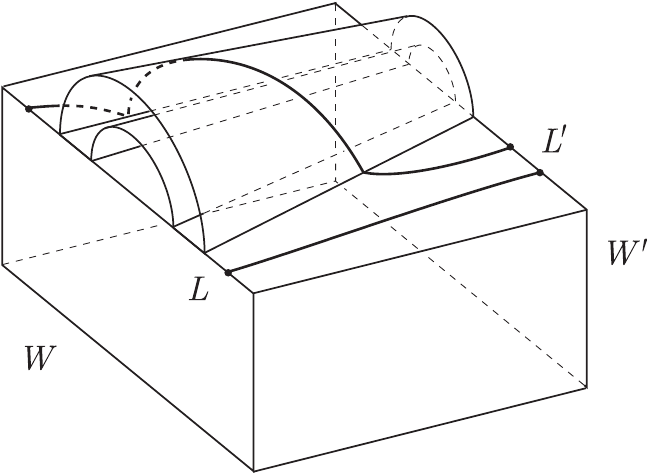}
\end{equation*}
\caption{A concordance between $(W,L)$ and
  $(W',L')$}
\end{figure}
\begin{Rem}
The definition of concordance for manifold pair is not as
usual. Usually, the projection $\mathrm{proj}\circ
q|_{\widetilde{L}}:\widetilde{L}\to I$ for the concordance
$\widetilde{L}$ may not be level-preserving, whereas we assume so.  It
is evident from definition that if $(W,L)$ and $(W',L')$ are
concordant, the cobordisms $W^L$ and $W'^{L'}$ are concordant.
\end{Rem}
\subsubsection{Hopf link surgery for links}
Suppose a $d$-manifold $X$ is equipped with some embedded objects
inside, such as links or Y-links. By a {\it small Hopf link} in $X$,
we mean a Hopf link in a $d$-ball $b$ in $X$ with sufficiently small
radius so that $b$ is disjoint from the given embedded objects in $X$.

Let $K,L$ be the components of a Hopf link in $\mathrm{Int}\,X$ of
dimensions $1,d-2$ with standard framing and with spanning disks
$d_1,d_2$ in $\mathrm{Int}\,X$, respectively.  Let $c_1,c_2$ be
{framed} spheres of dimensions $d-2,1$, respectively, in
$\mathrm{Int}\,X$ such that $d_1$ (resp. $d_2$) intersects $c_1$
(resp. $c_2$) transversally by one point and does not intersect other
component in $c_1\cup c_2$ nor $K\cup L$ (See
Figure~\ref{fig:Hopf-K-L}, left).  Let $N_{d_1\cup d_2}$ be a small
closed neighborhood of $d_1\cup d_2$.  Let $c_1'\cup c_2'$ be a framed
link in $\mathrm{Int}\,X$ obtained from $c_1\cup c_2$ by
component-wise connect-summing a small Hopf link in $N_{d_1\cup d_2}$.
Let $K'\cup L'$ be another {framed} Hopf link in $N_{d_1\cup d_2}$ that
is small and disjoint from $c_1'\cup c_2'$. (See
Figure~\ref{fig:Hopf-K-L}, right.) {The following lemma is an analogue
  of \cite[Proposition~2.2]{Ha}.}
\begin{Lem}[Hopf link surgery]\label{lem:Hopf-surgery}
Let $W=X\times [a,b]$, where we identify $X\times\{b\}$ with
  $X$. Then the pairs $(W^{K\cup L},c_1\cup c_2)$ and $(W^{K'\cup
    L'},c_1'\cup c_2')$ are concordant. Moreover, we may assume that
  the concordance is strictly trivial on
  $(X\setminus\mathrm{Int}\,N_{d_1\cup d_2})\times[a,b]$.
\end{Lem}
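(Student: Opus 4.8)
The plan is to exhibit the concordance explicitly by building a single $(d+1)$-dimensional cobordism $\widetilde{W}$ over $[0,1]$ whose two ends recover $W^{K\cup L}$ and $W^{K'\cup L'}$, together with a level-preserving framed subbundle $\widetilde{c}$ of its top boundary restricting to $c_1\cup c_2$ and $c_1'\cup c_2'$. Since everything happens inside $N_{d_1\cup d_2}$ and is strictly trivial outside, I would first reduce to a local model: replace $X$ by a ball $B$ containing $N_{d_1\cup d_2}$, keeping track only of how $K\cup L$, $c_1\cup c_2$ sit relative to the spanning disks $d_1,d_2$. The key geometric input is that surgery on the component $L$ of dimension $d-2$ (with its standard framing) along a small tube, where $L$ is an unknot bounding $d_2$, has the effect of a ``slam-dunk''/handle-slide: after the surgery the ambient manifold is again (locally) a ball, and any sphere meeting $d_2$ transversally in one point — here $c_2$ — gets modified by connect-summing a parallel copy of $K$ (the belt sphere of the attached handle), i.e.\ a small Hopf-link partner; symmetrically, surgery along $K$ modifies $c_1$ by a small Hopf link coming from the belt sphere $L$. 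This is precisely the passage from $c_1\cup c_2$ to $c_1'\cup c_2'$, and it is implemented by an ambient isotopy of $\partial_+ W^{K\cup L}$, after which the residual small Hopf link $K'\cup L'$ left over in $N_{d_1\cup d_2}$ is exactly the core/belt data one must still surger. Making this ``surgery = ambient modification of transverse spheres'' statement precise, with framings, is the technical heart.

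Concretely I would proceed as follows. \textbf{Step 1.} Fix standard models: a $d$-ball containing the Hopf link $K\cup L$ with its spanning disks $d_1,d_2$, and disjoint small unknots $c_1,c_2$ with the stated single transverse intersections; normalize all framings to the standard ones. \textbf{Step 2.} Form $W = X\times[a,b]$ and attach the two handles along $K\times\{b\}$ and $L\times\{b\}$ to get $W^{K\cup L}$; describe $\partial_+ W^{K\cup L}$ as $X$ surgered along $K\cup L$, and identify inside it the cocores/belt spheres $\lambda$ (belt of the $K$-handle, a $(d-2)$-sphere) and $\kappa$ (belt of the $L$-handle, a $1$-sphere), which together form a small Hopf link. \textbf{Step 3.} Produce an ambient diffeomorphism (isotopic to the identity rel complement of a neighborhood of $d_1\cup d_2$) of $\partial_+ W^{K\cup L}$ carrying $c_2$ to $c_2$ connect-summed with a parallel of $\kappa$, and $c_1$ to $c_1$ connect-summed with a parallel of $\lambda$ — this is the ``finger move along the spanning disk through the surgered region'' and uses Property~\ref{propt:brunnian}-type innocuousness of the leftover components; under this identification the data becomes $c_1'\cup c_2'$ together with the residual Hopf link $K'\cup L'$ sitting in $N_{d_1\cup d_2}$. \textbf{Step 4.} Reassemble: the trace of this isotopy, laid out over the concordance parameter $t\in[0,1]$, gives a $(W,\partial_\sqcup W)$-bundle concordance between $W^{K\cup L}$ (end $t=0$) and a manifold canonically diffeomorphic to $W^{K'\cup L'}$ (end $t=1$), and the isotopy trace of $c_1\cup c_2$ gives the required level-preserving framed subbundle $\widetilde{c}$; by construction it is strictly trivial over $(X\setminus\mathrm{Int}\,N_{d_1\cup d_2})\times[a,b]$. \textbf{Step 5.} Check framings are respected throughout (the connect-sum with a standardly framed Hopf-link component does not disturb the canonical framings, since the spanning disks give the trivializations).

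\textbf{Main obstacle.} The real work is Step~3 together with the framing bookkeeping in Step~5: one must verify that surgery on the $d-2$ handle really does convert ``meeting $d_2$ once transversally'' into ``connect-summed with a small Hopf partner,'' \emph{ambiently and rel boundary}, and that the two simultaneous handle attachments (along $K$ and along $L$) interact correctly so that the two modifications of $c_1$ and of $c_2$ can be performed disjointly. This is the point where one genuinely imitates \cite[Proposition~2.2]{Ha} (and its $3$-dimensional predecessor in \cite{Mat}): the argument is a careful ``slide the sphere through the surgery region along the spanning disk'' manipulation, and the only subtlety in higher dimensions is that the normal framings must be tracked, which is exactly why all the relevant spheres were set up as unknots canonically framed by their spanning disks. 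Once Step~3 is in hand, Steps~4 and~5 are formal.
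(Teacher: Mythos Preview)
Your plan is essentially the paper's own argument: slide $c_i$ over the handles attached along $K\cup L$ (your connect-sum with the belt sphere is exactly the paper's $c_i\#\overline{L}_i$, since the belt of the $K$-handle is a meridian of $K$, hence a parallel of $L$), and then, because the slid spheres now miss the spanning disks $d_i$, shrink $K\cup L$ along $d_1,d_2$ to a small $K'\cup L'$. The only quibbles are that Property~\ref{propt:brunnian} plays no role here (no Borromean link is present in this lemma), and that your Steps~3--4 would be cleaner if you separated, as the paper does, the post-surgery handle slide of $c_i$ inside $\partial_+W^{K\cup L}$ from the pre-surgery ambient isotopy of $K\cup L$ to $K'\cup L'$ inside $X$; these are two distinct moves, and conflating them is what makes your ``residual Hopf link'' sentence slightly ambiguous.
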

\begin{figure}[!htbp]
\includegraphics[height=25mm]{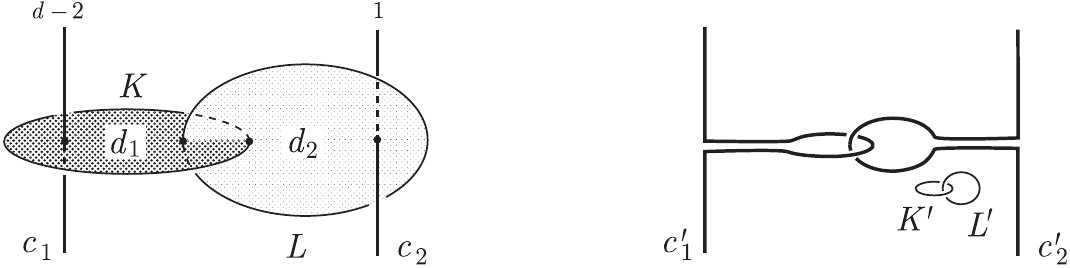}
\caption{Surgery along Hopf link $K\cup L$}\label{fig:Hopf-K-L}
\end{figure}
This Lemma can also be applied to a general cobordism $W$ with
$\partial_+W=X$ by considering a collar neighborhood of $\partial_+W$
as $X\times[a,b]$.
\begin{proof}
Let $L_1=L$ and $L_2=K$. {Before} going to the proof, we define
band-sums $c_i\#L_i$. We choose an embedded path $\gamma_i$ in $d_i$
that goes from $d_i\cap c_i$ to $d_i\cap L_i$. Then we may connect-sum
$c_i$ with $L_i$ along $\gamma_i$ so that the result is disjoint from
$d_i$. More precisely, the restriction of the normal bundle of $d_i$
on $\gamma_i$ is an $\R^{\dim{c_i}}$-bundle. Thus $\gamma_i$ can be
thickened to a $\dim{c_i}$-disk bundle in $N_{d_1\cup d_2}$ that is
perpendicular to $d_i$ and its restriction on the endpoints are
$\dim{c_i}$-disks in $c_i$ and $L_i$. The disk bundle is a
$(\dim{c_i}+1)$-dimensional 1-handle attached to $c_i\cup L_i$ along
which surgery can be performed. This surgery produces the connected
sum $c_i\# L_i$ along $\gamma_i$ whose result is disjoint from
$d_i$. See Figure~\ref{fig:slide-path}, left.

Now {let us return to the framed link $K\cup L\cup c_1\cup c_2$}.  We
perform surgeries on the link $K\cup L=L_2\cup L_1$, then the
component $c_i$ can be slid over $\gamma_i$ and the
$(\dim{L_i}+1)$-handle attached to $L_i$. The result of the handle
slide is $c_i\#\overline{L}_i$ defined as in the previous paragraph,
where $\overline{L}_i$ is a parallel copy of $L_i$ obtained from $L_i$
by slightly pushing off by one direction of the framing on $L_i$. We
denote by $c_i''$ the resulting {framed} sphere
$c_i\#\overline{L}_i$. We assume that $c_i''\setminus c_i$ is included
in $N_{d_1\cup d_2}$ and agrees with $c_i$ outside $N_{d_1\cup
  d_2}$. The link $c_1''\cup c_2''$ is obtained from $c_1\cup c_2$ by
component-wise connect-summing Hopf links in $N_{d_1\cup d_2}$, which
is realized by handle slides. Thus
\begin{equation*}
(W^{K\cup L},c_1\cup c_2) \ \ \ \mbox{and} \ \ \
(W^{K\cup L},c_1''\cup c_2'')
\end{equation*}
are concordant.

We need to show that $(c_1''\cup c_2'')\cup (K\cup L)$ is isotopic in
$N_{d_1\cup d_2}$ to $(c_1'\cup c_2')\cup (K'\cup L')$. Since $c_1''$
is disjoint from $d_1$, the component $K$ can be shrinked along $d_1$
to a small sphere $K'$ in a small $d$-disk $b$ around the point
$d_1\cap L$ without intersecting $c_1''\cup c_2''$ as in
Figure~\ref{fig:slide-path}. Similarly, since $c_2''$ is disjoint from
$d_2$, the component $L$ can be shrinked along $d_2$ to a small sphere
$L'$ in $b$, without intersecting $c_1''\cup c_2''$, so that $K'\cup
L'$ is a small Hopf link in $b$.

Then similar isotopy can be performed for $c_1''\cup c_2''$ in
$N_{d_1\cup d_2}$ so that the part parallel to $L_1$ and $L_2$ is
shrinked to a small Hopf link with bands. We may assume that this
isotopy is disjoint from $b$. The result of the deformation is
$(c_1'\cup c_2')\cup (K'\cup L')$. Thus, the deformations performed so
far give a desired concordance.
\begin{figure}[!htbp]
\begin{equation*}
\includegraphics[height=25mm]{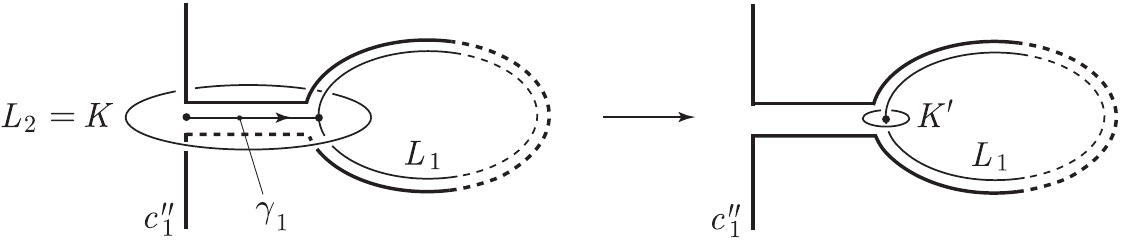}
\end{equation*}
\caption{Sliding $c_1$ along $\gamma_1$, and then isotoping $K$ to $K'$. }\label{fig:slide-path}
\end{figure}
\end{proof}

\begin{Rem}\label{rem:family-hopf}
The framed Hopf link $K\cup L$ may be replaced by some {``smooth
  family''} of {framed} Hopf links. More precisely, let $K_s\cup L_s$
be a smooth family of framed Hopf links parametrized over a compact
connected manifold $B$ with a base point $s_0$, such that
\begin{itemize}
\item[(a)] $L_s=L$, and hence $L_s$ bounds $d_2$.
\item[(b)] $d_2$ intersects 1-dimensional arc in $c_2$
  transversally by one point.
\item[(c)] $K_s$ bounds a smooth family of disks $d_{1,s}$ in
  $\mathrm{Int}\,X$ such that for each $s$, $L_s$ intersects $d_{1,s}$
  transversally by one point, and $K_s$ intersects $d_2$ transversally
  by one point.
\item[(d)] $d_{1,s}$ and $d_{1,s_0}$ agree on a neighborhood of
  the arc $d_{1,s_0}\cap d_2$.
\end{itemize}
Then surgery on the family $K_s\cup L_s$ gives a family of cobordisms
that is concordant (in the sense of
Definition~\ref{def:concordance-bundle}) to the strictly trivial family of
cobordisms with a nontrivial family of spheres $c_{2,s}'$ on the top,
which is obtained from $c_2$ by connected-summing with parallel copies
of $K_s$.
\begin{figure}[!htbp]
\begin{equation*}
\includegraphics[height=28mm]{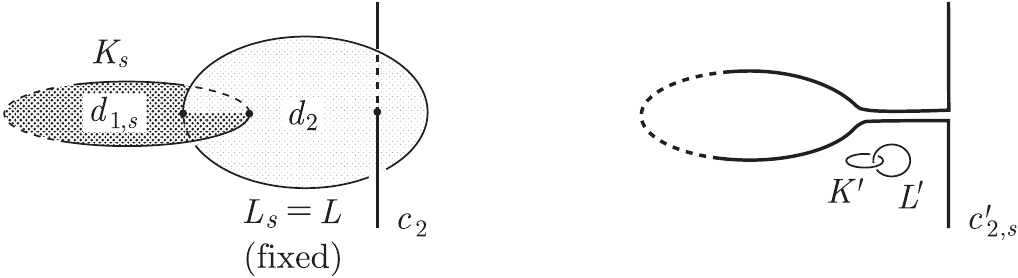}
\end{equation*}
\caption{Surgery along a family of Hopf links $K_s\cup L$.}\label{fig:hopf-Ks-L}
\end{figure}

For example, if $B=S^{d-3}$, then the family $K_s$ may be chosen so
that the associated map $B\times D^2\to B\times \mathrm{Int}\,X$ for
the spanning disks $d_{1,s}$ intersects $B\times c_1$ transversally by
one point in $B\times \mathrm{Int}\,X$. Such a family of framed Hopf
links surgery will play an important role in the framed link description
of the Type II surgery in Lemma~\ref{lem:Y-surgery-link-II}.
\end{Rem}

\subsection{Type I Y-surgery for links}\label{ss:Y-type-I}
We say that a leaf $\ell$ of a Y-graph $T$ is {\it simple} relative to
a submanifold $c$ in $\mathrm{Int}\,X$ with $\dim{\ell}+\dim{c}=d-1$,
if the following conditions are satisfied.
\begin{enumerate}
\item The leaf $\ell$ bounds a disk $m$ in $\mathrm{Int}\,X$. 
\item The disk $m$ intersects $c$ transversally by one point.
\end{enumerate}
See Figure~\ref{fig:simple-Y-graph} (a). We say that a Y-graph $T$
with leaves $\ell_1,\ell_2,\ell_3$ is {\it simple} relative to a three
component link $c_1\cup c_2\cup c_3$ in $\mathrm{Int}\,X$ with
$\dim{\ell}_i+\dim{c_i}=d-1$, if the following conditions are
satisfied.
\begin{enumerate}
\item The leaves $\ell_1,\ell_2,\ell_3$ bound disjoint disks
  $m_1,m_2,m_3$ in $\mathrm{Int}\,X$, respectively.
\item For each $i$, the disk $m_i$ intersects $c_i$ transversally by
  one point and does not intersect other components in $c_1\cup
  c_2\cup c_3$.
\end{enumerate}
See Figure~\ref{fig:simple-Y-graph} (b). In this case, we take a small
closed neighborhood of $T\cup m_1\cup m_2\cup m_3$ that is a $d$-disk
and denote it by $N(T)$.
\begin{figure}[!htbp]
\[ \includegraphics[height=33mm]{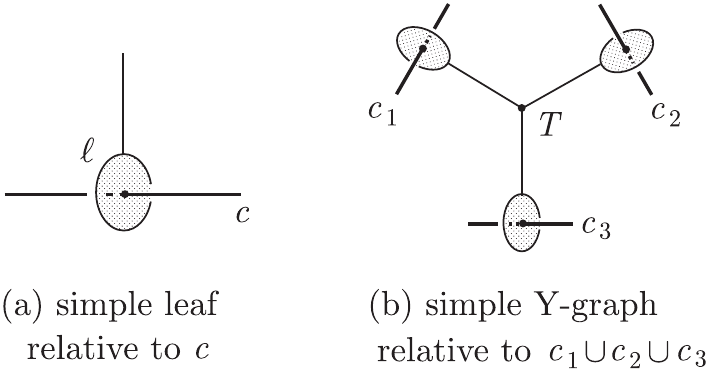} \]
\caption{}\label{fig:simple-Y-graph}
\end{figure}

Let $L_T$ be the framed link associated to $T$, as in
Definition~\ref{def:type-I}. We define $W^T$ as $W^{L_T}$ in the sense
of section \ref{ss:link-surgery}.
\begin{Lem}[Type I surgery]\label{lem:Y-surgery-link-I}
Let $W=X\times[a,b]$, where we identify $X\times\{b\}$ with $X$.
Suppose that the leaves of a Y-graph $T$ of Type I in
$\mathrm{Int}\,X$ of dimensions $1,1,d-2$ are linked to {framed}
submanifolds $c_1,c_2,c_3$ of dimensions $d-2,d-2,1$, respectively,
and that $T$ is simple relative to $c_1\cup c_2\cup c_3$.  Let
$c_1'\cup c_2'\cup c_3'$ be a framed link that is obtained from
$c_1\cup c_2\cup c_3$ by component-wise connect-summing Borromean
rings in $N(T)$. 
\begin{enumerate}
\item There are three disjoint small Hopf links $h_1,h_2,h_3$ in
  $N(T)\setminus(c_1'\cup c_2'\cup c_3')$ and a concordance between
  the pairs $(W^T, c_1\cup c_2\cup c_3)$ and $(W^{h_1\cup h_2\cup
    h_3},c_1'\cup c_2'\cup c_3')$ that is strictly trivial on
  $(X\setminus\mathrm{Int}\,N(T))\times [a,b]$.
\item Moreover, if we consider up to isotopy, we may assume that two
  of the components of the link $c_1'\cup c_2'\cup c_3'$ agree as subsets of
  $\mathrm{Int}\,X$ with those of $c_1\cup c_2\cup c_3$.
\end{enumerate}
\end{Lem}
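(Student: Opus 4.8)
The plan is to realize the Type~I $Y$-surgery on $T$ as a surgery along three Hopf links (Definition~\ref{def:type-I}), and then to localize those three Hopf links one at a time by the Hopf-link surgery lemma (Lemma~\ref{lem:Hopf-surgery}), keeping track of how the linking of the leaves $\ell_i$ with the $c_i$ is converted into Borromean-ring connect-summands on the $c_i$. This is the higher-dimensional analogue of the clasper computations of \cite[\S{2}]{Ha} and \cite[Lemma~2.1]{GGP}, with Lemma~\ref{lem:Hopf-surgery} and Property~\ref{propt:brunnian} (the Brunnian property of the Borromean rings) replacing their pictorial moves.

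By Definition~\ref{def:type-I}, $W^T=W^{L_T}$ is surgery along the six-component framed link $L_T=\bigcup_{i=1}^3(\ell_i\cup\lambda_i)$, where $\ell_i$ is the $i$-th leaf, $\lambda_i=\mu_i\#\beta_i$, $\mu_i$ is a small meridian sphere of $\ell_i$ with $\ell_i\cup\mu_i$ a Hopf link, the spheres $\beta_1\cup\beta_2\cup\beta_3$ form a Borromean rings of dimensions $d-2,d-2,1$ in a small ball $b_0\subset N(T)$ disjoint from $\bigcup_i(\ell_i\cup\mu_i)$, and the connect-sums are along disjoint arcs missing the cocores of the $1$-handles of $N(T)$. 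Simplicity of $T$ relative to $c_1\cup c_2\cup c_3$ provides disjoint disks $m_i$ with $\partial m_i=\ell_i$, $m_i$ meeting $c_i$ transversally in one point and disjoint from $c_j$ for $j\ne i$; thus $m_i$ is a spanning disk for $\ell_i$ exhibiting $\ell_i\cup c_i$ as a Hopf link. Everything of interest --- $L_T$, the disks $m_i$, the ball $b_0$ and the band-sum arcs --- lies in $N(T)$, so all deformations below can be taken supported in $N(T)\times[a,b]$; this yields the ``strictly trivial'' clause of~(1).

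For~(1), process the three edges one at a time; since $m_1,m_2,m_3$ are disjoint, the moves for distinct $i$ are supported in disjoint neighbourhoods and commute. For a fixed edge, the configuration --- a Hopf link $\ell_i\cup\lambda_i$ with an external sphere $c_i$ linked with the component $\ell_i$ via the spanning disk $m_i$ --- is exactly the one to which the handle-slide argument in the proof of Lemma~\ref{lem:Hopf-surgery} applies: sliding $c_i$ along $m_i$ and over the handle attached along $\lambda_i$ replaces $c_i$ by $c_i\#\bar\lambda_i$ (a parallel pushoff along the framing), after which $\ell_i\cup\lambda_i$ can be isotoped into a small ball near the point $m_i\cap\lambda_i$. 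Since $\lambda_i=\mu_i\#\beta_i$ and $\mu_i$ is a meridian of $\ell_i$, we have $\bar\lambda_i\simeq\bar\mu_i\#\bar\beta_i$ with $\bar\mu_i$ an unknot lying, after the localization, in a ball disjoint from $\beta_1\cup\beta_2\cup\beta_3$; hence $\bar\mu_i$ can be absorbed and $c_i\#\bar\lambda_i$ is isotopic to $c_i':=c_i\#\bar\beta_i$. Carrying out the three localizations in turn, using Property~\ref{propt:brunnian} to keep the Borromean summands in check, produces a concordance of pairs, strictly trivial outside $N(T)\times[a,b]$, from $(W^T,c_1\cup c_2\cup c_3)$ to $(W^{h_1\cup h_2\cup h_3},c_1'\cup c_2'\cup c_3')$, where $c_i'=c_i\#\bar\beta_i$, the triple $\bar\beta_1\cup\bar\beta_2\cup\bar\beta_3$ is again a Borromean rings in $N(T)$, and $h_1,h_2,h_3$ are disjoint small Hopf links which, after shrinking the relevant balls away from the other components, lie in $N(T)\setminus(c_1'\cup c_2'\cup c_3')$. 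This is the assertion of~(1); that no $\bar\beta_i$ can be removed by an isotopy of $c_i'$ alone --- because $\bar\beta_1,\bar\beta_2,\bar\beta_3$ are mutually Borromean --- is precisely what keeps the construction nontrivial.

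Part~(2) is a further use of Property~\ref{propt:brunnian}: since deleting any one of $\beta_1,\beta_2,\beta_3$ makes the other two standard, by choosing the order in which the three edges are processed and routing the band-sum arcs appropriately, two of the three Borromean-ring summands can be transferred onto the third component, so that two of $c_1',c_2',c_3'$ coincide, as subsets of $\mathrm{Int}\,X$ and up to isotopy, with the corresponding $c_i$; this mirrors the analogous step in \cite[\S{2}]{Ha}. The conceptual content above is routine, and the actual work is the bookkeeping in arbitrary dimension $d$: one must check that the normal framings --- the canonical ones induced from the spanning disks $m_i$, as in Definition~\ref{def:type-I} --- are preserved through the handle slides and isotopies, so that the output is a framed concordance of pairs in the sense of Definition~\ref{def:concordance1}; and in~(2), that the two distinguished components return to the original $c_i$ up to honest isotopy rather than merely up to concordance. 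The latter --- where the dimension-$3$ pictures are no longer available --- is the step I expect to be the main obstacle.
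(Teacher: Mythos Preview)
Your approach is essentially the paper's: reduce to the six-component framed link of Definition~\ref{def:type-I}, perform the handle slides from the proof of Lemma~\ref{lem:Hopf-surgery} to turn the $c_i$ into $c_i'=c_i\#\overline{L}_i$, then shrink the three Hopf links to small ones. The framing bookkeeping you worry about at the end is not an issue in the paper and goes through exactly as in Lemma~\ref{lem:Hopf-surgery}.

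There is, however, one genuine gap in your argument for~(1). You assert that ``the moves for distinct $i$ are supported in disjoint neighbourhoods and commute'' because the $m_i$ are disjoint, and that after the slide ``$\ell_i\cup\lambda_i$ can be isotoped into a small ball''. But the $\lambda_i=\mu_i\#\beta_i$ all pass through the common Borromean ball $b_0$ and are mutually linked there; the three Hopf links $\ell_i\cup\lambda_i$ are \emph{not} contained in disjoint neighbourhoods, and $\lambda_1$ cannot simply be isotoped into a small ball while $\lambda_2,\lambda_3$ stand still. The paper handles this as follows: after sliding all three $c_i$ and shrinking $\ell_1$ along $m_1$ to a small $\ell_1'$, one slides the remaining components $\ell_j\cup\lambda_j$ ($j\neq 1$) over the handle attached along $\ell_1'$. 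Because $\ell_1'$ is small, this slide does not change the isotopy type of $(\ell_2\cup\lambda_2)\cup(\ell_3\cup\lambda_3)\cup c_1'\cup c_2'\cup c_3'$ in $N(T)$, but it \emph{does} unlink $\lambda_1$ from the other $\lambda_j$; only then can $\ell_1'\cup\lambda_1$ be shrunk to a small Hopf link $h_1$. One then iterates. This extra slide over $\ell_1'$ is the missing ingredient in your sketch; your invocation of Property~\ref{propt:brunnian} ``to keep the Borromean summands in check'' does not substitute for it, since at this stage all three $\lambda_i$ are still present.

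For~(2) you are making it harder than it is, and your worry that this is ``the main obstacle'' is misplaced. There is no need to ``transfer'' summands or reroute bands: Property~\ref{propt:brunnian} says directly that, forgetting $\overline{\beta}_3$, the two-component link $\overline{\beta}_1\cup\overline{\beta}_2$ is isotopic rel boundary to the standard one. Hence $c_1'\cup c_2'$ is isotopic (as an ordinary link in $\mathrm{Int}\,X$) to $c_1\cup c_2$; extending this by isotopy extension carries $c_3'$ along to some new position. That is the entire argument in the paper.
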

\begin{figure}[!htbp]
\[ \includegraphics[height=35mm]{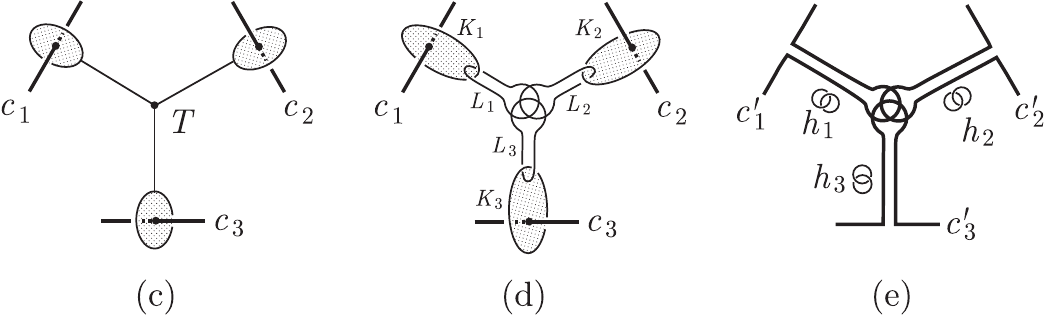} \]
\caption{}\label{fig:Y-graph-for-link}
\end{figure}
\begin{Rem}\label{rem:type-I}
Lemma~\ref{lem:Y-surgery-link-I} shows that the two definitions of
Type I surgeries: ``the complement of thickened string link'' given in
section \ref{s:graph-surgery}, and ``framed link surgery'' in
Definition~\ref{def:type-I} are equivalent. Namely, let $L$ be the six
component framed link of Definition~\ref{def:type-I} in $V$. Then the
latter definition is given by surgery along $L$ in $V$. According to
Lemma~\ref{lem:Y-surgery-link-I} and if we consider modulo small Hopf
links, this surgery replaces $V$ with another one that is obtained by
taking the complements of the Borromean string link. The relative
diffeomorphism type of the resulting manifold is determined uniquely
and agrees with the former definition of Type I surgery.
\end{Rem}
\begin{proof}[Proof of Lemma~\ref{lem:Y-surgery-link-I}]
Lemma~\ref{lem:Y-surgery-link-I} is obtained by iterated applications
of the concordance deformations of
Lemma~\ref{lem:Hopf-surgery}. Namely, by Definition~\ref{def:type-I},
the surgery on $T$ is given by the surgery on a six component framed
link $ \bigcup_{i=1}^3(K_i\cup L_i).  $ Since $T$ is simple relative
to $c_1\cup c_2\cup c_3$, the component $K_i$ bounds a disk $d_i$ in
$N(T)$. After relabeling if necessary, we may assume that for each
$i$, the intersections $d_i\cap c_i$ and $d_i\cap L_i$ are both one
point and orthogonal, and $d_i$ does not {have other intersections
  with other link components}. (See Figure~\ref{fig:Y-graph-for-link}
(d).) We choose an embedded path $\gamma_i$ in $d_i$ that goes from
$d_i\cap c_i$ to $d_i\cap L_i$. Then we may {define the band sum
  $c_i\# L_i$} along $\gamma_i$ so that the result is disjoint from
$d_i$, as in the proof of Lemma~\ref{lem:Hopf-surgery}.

After performing surgeries on the framed link $\bigcup_{i=1}^3(K_i\cup
L_i)$, the component $c_i$ can be slid over $\gamma_i$ and then over
the $(\dim{L_i}+1)$-handle attached to $L_i$. The result of the handle
slide is $c_i\#\overline{L}_i$, where $\overline{L}_i$ is a parallel
copy of $L_i$ obtained from $L_i$ by slightly pushing off by one
direction of the framing on $L_i$. We define $c_i'$ as the resulting
{framed} sphere $c_i\#\overline{L}_i$. We assume that $\overline{L}_i$
is included in $N(T)$ and that $c_i'$ agrees with $c_i$ outside
$N(T)$. Now a {framed} link $c_1'\cup c_2'\cup c_3'$ has been obtained
from $c_1\cup c_2\cup c_3$ by sliding components over the handles
attached to $\bigcup_{i=1}^3(K_i\cup L_i)$, and also can be obtained
by component-wise connect-summing Borromean rings in $N(T)$. (See
Figure~\ref{fig:Y-graph-for-link} (e).)

We need to show that the Hopf links $K_i\cup L_i$ can be deformed into
a small Hopf link $h_i$. Since $c_1'$ is disjoint from $d_1$, the
component $K_1$ can be shrinked along $d_1$ to a small sphere $K_1'$
in a small $d$-disk around the point $d_1\cap L_1$, without
intersecting $c_1'\cup c_2'\cup c_3'$ during the shrinking
isotopy. Then by sliding other components $K_j\cup L_j$ over $K_1'$
for $j\neq 1$, the component $L_1$ can be made unlinked from $K_j\cup
L_j$. This slide does not change the isotopy type of
\begin{equation*}
(K_2\cup
L_2)\cup (K_3\cup L_3)\cup c_1'\cup c_2'\cup c_3'
\end{equation*}
  in $N(T)$, though does change that in $N(T)\setminus(K_1'\cup
L_1)$. Now the Hopf link $K_1'\cup L_1$ can be shrinked into a small
Hopf link $h_1$ without affecting other components. After that,
similar slidings can be performed for the Hopf links $K_2\cup L_2$ and
$K_3\cup L_3$ so that they can be separated and shrinked into disjoint
small Hopf links $h_2,h_3$, respectively.  Thus the deformations
performed so far consist of isotopy and slides over handles, which
give a desired concordance as in (1). The condition (2) follows from
Property~\ref{propt:brunnian}.
\end{proof}

\subsection{Type II Y-surgery for links}\label{ss:Y-type-II}
We shall give an analogue of Lemma~\ref{lem:Y-surgery-link-I} for Type
II Y-surgery, which is Lemma~\ref{lem:Y-surgery-link-II}.

\begin{Def}\label{def:concordance-bundle}
Let $W$ be as in Definition~\ref{def:concordance1} and let
$\pi_i:E_i\to B$ ($i=0,1$) be $(W,\partial_\sqcup W)$-bundles.  Let
$\widetilde{L}_0$ and $\widetilde{L}_1$ be fiberwise framed
trivialized subbundles of the top boundaries of $E_0$ and $E_1$,
respectively. We say that the pairs $(\pi_0,\widetilde{L}_0)$ and
$(\pi_1,\widetilde{L}_1)$ are {\it concordant} if
$\widetilde{L}_0\sqcup \widetilde{L}_1$ is the restriction of a
trivialized (fiberwise normally) framed subbundle $\widetilde{L}_{01}$
of the top boundary of a concordance between $\pi_0$ and
$\pi_1$. Let $\pi_0^{\widetilde{L}_0}:E_0^{\widetilde{L}_0}\to B$
  be the $(W^{L_0},\partial_\sqcup W^{L_0})$-bundle obtained by attaching a
  trivialized $B$-family of handles along $\widetilde{L}_0$, where
  $L_0$ is a fiber of $\widetilde{L}_0$.
\end{Def}
\begin{Lem}[Type II surgery]\label{lem:Y-surgery-link-II}
Let $W=X\times[a,b]$, where we identify $X\times\{b\}$ with $X$, and
let $\pi_0:S^{d-3}\times W \to S^{d-3}$ be the product $W$-bundle.
Suppose that the leaves of a Y-graph $T$ of Type II in
$\mathrm{Int}\,X$ of dimensions $1,d-2,d-2$ are linked to {framed}
submanifolds $c_1,c_2,c_3$ of dimensions $d-2,1,1$, respectively, and
that $T$ is simple relative to $c_1\cup c_2\cup c_3$.  Let $c_1'\cup
c_2'\cup c_3'$ be a framed trivialized subbundle of $S^{d-3}\times
\mathrm{Int}\,X\to S^{d-3}$ that is obtained from
$S^{d-3}\times(c_1\cup c_2\cup c_3)\hookrightarrow S^{d-3}\times
\mathrm{Int}\,X$ by fiberwise component-wise connect-summing
$S^{d-3}$-family of framed links $S^{d-2}\cup S^1\cup S^1\to N(T)$
that defines the Type II surgery.
\begin{enumerate}
\item There is a concordance 
  between the pairs of $(W,\partial_\sqcup W)$-bundles over $S^{d-3}$
\begin{equation*}
  (\pi_0^T,S^{d-3}\times(c_1\cup c_2\cup c_3)) \ \ \mbox{and}
  \ \ (\pi_0^{h_1\cup h_2\cup h_3},c_1'\cup c_2'\cup c_3')
\end{equation*}
that is strictly trivial on $(X-\mathrm{Int}\,N(T))\times [a,b]$.
\item We may assume that two of the components of $c_1'\cup c_2'\cup
  c_3'$ agree with those of the inclusion 
  $S^{d-3}\times (c_1\cup c_2\cup c_3)\hookrightarrow
  S^{d-3}\times \mathrm{Int}\,X$.
\end{enumerate}
\end{Lem}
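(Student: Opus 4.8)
The plan is to follow the strategy of Lemma~\ref{lem:Y-surgery-link-I}, but carried out fiberwise over $S^{d-3}$, using the family version of the Hopf-link surgery deformation recorded in Remark~\ref{rem:family-hopf} at the one place where the Type II construction is genuinely parametrized. First I would unwind the framed-link definition of Type II surgery (Definition~\ref{def:type-II}): the surgery on $T$ is the $S^{d-3}$-family of surgeries along the six-component framed link $\bigcup_{i=1}^3(K_i\cup L_{i,s})$, where $K_1,K_2,K_3$ and the Hopf partners $K_i\cup L_{i,s}$ are as in section~\ref{ss:Y-type-II}, and the $s$-dependence is concentrated in the middle component of the ambient Borromean string link (dimensions $d-2,d-2,1$), i.e.\ in $L_{2,s}$, while $L_{1,s}$ and $L_{3,s}$ do not depend on $s$. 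Since $T$ is simple relative to $c_1\cup c_2\cup c_3$, each $K_i$ bounds a disk $d_i$ in $N(T)$ meeting $c_i$ and $L_{i,s}$ each in one transverse point (for $i=2$ the disk $d_2$ can be chosen $s$-independent because $K_2$ is), and disjoint from the other components.

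The next step is the handle-slide step, performed simultaneously in every fiber. After performing the $S^{d-3}$-family of surgeries along $\bigcup_{i=1}^3(K_i\cup L_{i,s})$, I slide $c_i$ over a path $\gamma_i\subset d_i$ and then over the $(\dim L_{i,s}+1)$-handle attached to $L_{i,s}$, producing $c_i'=c_i\#\overline{L}_{i,s}$; for $i=1,3$ this is an $s$-independent slide and for $i=2$ it is the $S^{d-3}$-family of slides whose output is the promised nontrivial family $c_2'$ obtained by connect-summing $c_2$ with parallel copies of the family $K_{2,s}$ — this is exactly the phenomenon isolated in Remark~\ref{rem:family-hopf} (with $B=S^{d-3}$, and with conditions (a)–(d) there guaranteed by simplicity of $T$ plus the $s$-independence of $d_2$). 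These slides realize the fiberwise component-wise connect-sum with the $S^{d-3}$-family of framed links $S^{d-2}\cup S^1\cup S^1\to N(T)$ defining the Type II surgery, so they give a concordance of $(W,\partial_\sqcup W)$-bundles over $S^{d-3}$ (in the sense of Definition~\ref{def:concordance-bundle}) between $(\pi_0^T,\,S^{d-3}\times(c_1\cup c_2\cup c_3))$ and $(\pi_0^{\bigcup(K_i\cup L_{i,s})},\,c_1'\cup c_2'\cup c_3')$, strictly trivial outside $N(T)$.

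It then remains to shrink each family of Hopf links $K_i\cup L_{i,s}$ into a family of \emph{small} Hopf links $h_i$ disjoint from $c_1'\cup c_2'\cup c_3'$. Since $c_1'$ is disjoint from $d_1$, the component $K_1$ shrinks along $d_1$ into a small $d$-disk around $d_1\cap L_1$; sliding the remaining $K_j\cup L_{j,s}$ over this small copy of $K_1$ unlinks $L_1$ from them without changing the isotopy type of the rest in $N(T)$, after which $K_1\cup L_1$ becomes a small Hopf link $h_1$. One repeats with $i=2,3$; for $i=2$ all these shrinkings and slidings are carried out as a smooth $S^{d-3}$-family (using that the spanning-disk family $d_{1,s}$ of Remark~\ref{rem:family-hopf} agrees near $d_{1,s_0}\cap d_2$). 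This yields part (1). For part (2), after these deformations the components $c_1'$ and $c_3'$ (the ones band-summed to the $s$-independent leaves $L_1,L_3$, whose deleted-component string links are standard) are isotopic to $c_1$ and $c_3$ by Property~\ref{propt:brunnian}, exactly as in Lemma~\ref{lem:Y-surgery-link-I}(2). The main obstacle I anticipate is purely bookkeeping: making precise that the \emph{entire} deformation can be chosen smoothly in $s\in S^{d-3}$ and that the only nontrivial monodromy produced is the one family $c_2'$ — i.e.\ checking that conditions (a)–(d) of Remark~\ref{rem:family-hopf} hold throughout all the intermediate handle slides and shrinkings, not merely at the start — but no new geometric idea beyond Lemma~\ref{lem:Hopf-surgery}, its family version, and Property~\ref{propt:brunnian} is needed. $\qed$
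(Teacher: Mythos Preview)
Your proposal is correct and follows essentially the same approach as the paper: the paper's proof simply states that the argument is parallel to that of Lemma~\ref{lem:Y-surgery-link-I}, replacing $L_i$ by the family $L_{i,s}$ and invoking Remark~\ref{rem:family-hopf} after noting that the assumption ``$L_{i,s}$ agrees with $L_{i,s_0}$ near the basepoint'' ensures conditions (a)--(d) there. Your write-up is considerably more detailed than the paper's (which defers entirely to Lemma~\ref{lem:Y-surgery-link-I}), and aside from a small notational slip---it is $L_{2,s}$, not ``$K_{2,s}$'', that carries the $s$-dependence, so $c_2'$ is obtained by connect-summing with parallel copies of $L_{2,s}$---the argument is the intended one.
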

\begin{Rem}\label{rem:type-II}
Lemma~\ref{lem:Y-surgery-link-II} shows that the two definitions of
Type II surgeries given in section \ref{s:graph-surgery} and
Definition~\ref{def:type-II} are equivalent, as in
Remark~\ref{rem:type-I}.
\end{Rem}
\begin{proof}[Proof of Lemma~\ref{lem:Y-surgery-link-II}]
Proof is analogous to that of Lemma~\ref{lem:Y-surgery-link-I} and can
be done by iterated applications of the concordance deformations of
Remark~\ref{rem:family-hopf}. We only need to replace $L_i$ {in
  Lemma~\ref{lem:Y-surgery-link-I}} with a family of links $L_{i,s}$
in $N(T)$, $s\in S^{d-3}$. By Definition~\ref{def:type-II}, the
surgery on $T$ is given by the surgery on a six component link
\begin{equation*}
\begin{array}{c}
  \bigcup_{i=1}^3(K_i\cup L_{i,s})
\end{array}
\end{equation*}
  in each fiber over $s\in S^{d-3}$. We assume that for all $s$,
  $L_{i,s}$ agrees with $L_{i,s_0}$ near the base point of
  $L_{i,s_0}$.  Then $K_i\cup L_{i,s}$ satisfies the conditions
  (a)--(d) of Remark~\ref{rem:family-hopf}. Then $c_1'\cup c_2'\cup
  c_3'$ is defined by fiberwise component-wise connected-summing the
  family $L_{1,s}\cup L_{2,s}\cup L_{3,s}$ of framed links to
  $S^{d-3}\times (c_1\cup c_2\cup c_3)$. The proofs of (1) and
  (2) are parallel to those for Lemma~\ref{lem:Y-surgery-link-I}.
\end{proof}
The following lemma is an analogue of Habiro's move 10
(\cite[Proposition~2.7]{Ha}).
\begin{Lem}[Y-graph with Null-leaf]\label{lem:null-leaf-I}
Let $W=X\times[a,b]$, where we identify $X\times\{b\}$ with $X$.
Suppose that the leaves $\ell_1,\ell_2,\ell_3$ of a Y-graph $T$ of
Type I or II in $\mathrm{Int}\,X$ bound disjoint disks $m_1,m_2,m_3$ in
$\mathrm{Int}\,X$, respectively. Suppose there are disjoint
submanifolds $c_1,c_2$ in $\mathrm{Int}\,X\setminus T$ such that
$\ell_i$ is simple relative to $c_i$ for $i=1,2$, and that $c_1\cup
c_2$ is disjoint from $m_3$. (See Figure~\ref{fig:Y-null-leaf}.)  
Then there are three disjoint small Hopf links $h_1,h_2,h_3$ in
$N(T)\setminus(c_1\cup c_2)$ such that
\begin{enumerate}
\item if $T$ is of type I, there is a concordance between the pairs
$(W^T,c_1\cup c_2)$ and $(W^{h_1\cup h_2\cup h_3},c_1\cup c_2)$
that is strictly trivial on $(X\setminus \mathrm{Int}\,N(T))\times[a,b]$, and 
\item if $T$ is of type II, there is a concordance between the
  pairs
  \begin{equation*}
    ((S^{d-3}\times W)^T,S^{d-3}\times(c_1\cup c_2))
  \ \ \mbox{and} \ \ 
  (S^{d-3}\times W^{h_1\cup h_2\cup h_3},S^{d-3}\times(c_1\cup
    c_2))
\end{equation*}
  that is strictly trivial on the product
  $(S^{d-3}\times(X\setminus \mathrm{Int}\,N(T))\times[a,b]).$
\end{enumerate}
\end{Lem}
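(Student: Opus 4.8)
The plan is to reduce Lemma~\ref{lem:null-leaf-I} to the two surgery lemmas already proved, by first disposing of the leaf $\ell_3$ — the ``null-leaf'' — and then applying Lemma~\ref{lem:Y-surgery-link-I} (in the Type I case) or Lemma~\ref{lem:Y-surgery-link-II} (in the Type II case) to the remaining configuration. Concretely, since $\ell_3$ bounds the disk $m_3$ which is disjoint from $c_1\cup c_2$, the component $\ell_3$ is \emph{null} relative to the present link; so after performing the Hopf-link surgery associated to $\ell_3$ (one of the three Hopf links making up the Y-surgery on $T$), the effect on the pair $(X, c_1\cup c_2)$ should be invisible up to concordance. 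More precisely, I would invoke Lemma~\ref{lem:Hopf-surgery} (or its family version, Remark~\ref{rem:family-hopf}, in the Type II case) with the roles of the auxiliary spheres $c_1,c_2$ played by empty sets near $\ell_3$: the spanning disk $m_3$ guarantees the hypothesis that the leaf is simple relative to the (empty) link, so the surgery on the Hopf link $K_3\cup L_3$ attached at $\ell_3$ is concordant to a strictly trivial cobordism carrying no extra spheres on top. This is where Property~\ref{propt:brunnian} enters: once the third component of the Borromean string link (equivalently, the third leaf) is removed, the remaining two leaves become a \emph{standard} (unknotted, unlinked) pair of disks, so the Y-surgery on $T$ degenerates to a pair of independent Hopf-link surgeries at $\ell_1$ and $\ell_2$.

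With $\ell_3$ removed, the remaining configuration is a pair of leaves $\ell_1,\ell_2$ with $\ell_i$ simple relative to $c_i$. For each of $i=1,2$ I would apply Lemma~\ref{lem:Hopf-surgery} directly: the surgery on the Hopf link $K_i\cup L_i$ attached at $\ell_i$, in the presence of $c_i$, is concordant to the strictly trivial cobordism with a single modified sphere $c_i'$ obtained by connect-summing $c_i$ with a parallel copy of $L_i$ — but since Property~\ref{propt:brunnian} has collapsed the Borromean structure, $L_i$ is now an unknot bounding a disk, so $c_i'$ is isotopic back to $c_i$ itself. Running the two deformations disjointly inside $N(T)$ (they can be made disjoint because $m_1,m_2$ are disjoint) and keeping them strictly trivial outside $\mathrm{Int}\,N(T)$, I obtain disjoint small Hopf links $h_1,h_2$ and a concordance between $(W^T, c_1\cup c_2)$ and $(W^{h_1\cup h_2\cup h_3}, c_1\cup c_2)$, where $h_3$ is the small Hopf link coming from the (trivial) surgery at $\ell_3$. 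In the Type II case the same argument runs fiberwise over $S^{d-3}$, using Remark~\ref{rem:family-hopf} and the product structure of $\pi_0$, giving the concordance of $S^{d-3}$-families asserted in (2); here one uses that the family version of Hopf-link surgery still produces, up to concordance, only the strictly trivial family plus a controlled family of spheres, which after the Brunnian collapse is again a constant family isotopic to $S^{d-3}\times(c_1\cup c_2)$.

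The main obstacle I anticipate is bookkeeping the \emph{order} and \emph{disjointness} of the three deformations and making sure the ``null'' surgery at $\ell_3$ genuinely contributes nothing on the top boundary rather than some leftover family of spheres linked with $c_1,c_2$. The subtlety is that the three Hopf-link surgeries constituting a Y-surgery are not a priori independent — their attaching regions share the small ball in $\mathrm{Int}\,V$ where the Borromean rings live — so one must order the handle slides carefully: first slide $c_i$ over the handle at $L_i$ for $i=1,2$ (as in the proof of Lemma~\ref{lem:Y-surgery-link-I}), \emph{then} use the freedom granted by $m_3$ being disjoint from $c_1\cup c_2$ to shrink the third leaf's Hopf link into a small ball disjoint from everything. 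After that the three Hopf links are separated and the concordance is assembled from the pieces. The family (Type II) case adds the routine but slightly delicate point that all the isotopies and slides are chosen to vary smoothly in $s\in S^{d-3}$ and to be strictly trivial near the base point of each $L_{i,s}$, exactly as in the proof of Lemma~\ref{lem:Y-surgery-link-II}.
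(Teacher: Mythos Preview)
Your core idea is right --- the lemma follows from Lemmas~\ref{lem:Y-surgery-link-I}/\ref{lem:Y-surgery-link-II} together with the Brunnian property --- but you take a detour the paper avoids. The paper's proof is a two-line reduction: apply Lemma~\ref{lem:Y-surgery-link-I} (resp.~\ref{lem:Y-surgery-link-II}) verbatim with the third external sphere $c_3$ simply deleted from the hypothesis and conclusion; this gives a concordance from $(W^T,c_1\cup c_2)$ to $(W^{h_1\cup h_2\cup h_3},c_1'\cup c_2')$, where $c_1'\cup c_2'$ is the connect-sum of $c_1\cup c_2$ with two of the three Borromean components. Then Property~\ref{propt:brunnian} says that those two components, with the third absent, form a trivial link in $N(T)$, so $(c_1'\cup c_2')\cap N(T)$ is isotopic rel boundary to $(c_1\cup c_2)\cap N(T)$, and composing with this isotopy finishes the proof.

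Your route instead tries to decompose the Y-surgery into three separate applications of Lemma~\ref{lem:Hopf-surgery}, treating the Hopf link at $\ell_3$ first ``with the auxiliary spheres played by empty sets.'' As you yourself flag in your third paragraph, this doesn't work cleanly: the three Hopf links $K_i\cup L_i$ are not independent, because the $L_i$'s share the Borromean ball, so Lemma~\ref{lem:Hopf-surgery} does not apply to $K_3\cup L_3$ in isolation (its spanning disk $d_2$ for $L_3$ would meet $L_1\cup L_2$, not a single transverse sphere). Your proposed fix --- order the slides carefully, first slide $c_1,c_2$ over the handles at $L_1,L_2$, then shrink the third Hopf link --- amounts to re-running the proof of Lemma~\ref{lem:Y-surgery-link-I} from scratch rather than citing it. That works, but it is strictly more effort: the whole point of having packaged the three-fold Hopf manipulation into Lemma~\ref{lem:Y-surgery-link-I} is that you can now invoke it as a black box and only post-process the output with the Brunnian property. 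Also, be careful with the phrase ``concordant to a strictly trivial cobordism carrying no extra spheres on top'': the surgery at $\ell_3$ leaves behind the small Hopf link $h_3$ (a copy of $\mathbf{w}$), not a product cobordism.
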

\begin{figure}[!htbp]
\[ \includegraphics[height=25mm]{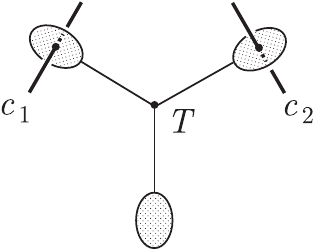} \]
\caption{Y-graph with null-leaf}\label{fig:Y-null-leaf}
\end{figure}
\begin{proof}
Case (1) is a corollary of Lemma~\ref{lem:Y-surgery-link-I}. It
suffices to delete $c_3$ and $c_3'$ in
Lemma~\ref{lem:Y-surgery-link-I}. By Property~\ref{propt:brunnian} of
Borromean rings, $(c_1'\cup c_2')\cap N(T)$ in
Lemma~\ref{lem:Y-surgery-link-I} is isotopic to $(c_1\cup c_2)\cap
N(T)$ fixing the boundary. Case (2) can be proven similarly by using
Lemma~\ref{lem:Y-surgery-link-II} instead of
Lemma~\ref{lem:Y-surgery-link-I}.
\end{proof}
\section{Family of framed links for graph surgery}
\subsection{Surgery on a collection of Y-graphs for a link}
Let $G$ be a connected uni-trivalent
graph embedded in $\mathrm{Int}\,X$ such that
\begin{enumerate}
\item $G$ has $r$ trivalent vertices and at least one univalent
  vertex,
\item edges are oriented in a way that the orientations of edges at
  each trivalent vertex is the same as that of Y-graph of Type I or
  II,
\item the univalent vertices of $G$ are on components of some
  spherical link $L$ in $\mathrm{Int}\,X$ consisting of 1- and
  $(d-2)$-spheres,
\item $L\cap \mathrm{Int}\,G=\emptyset$, where $\mathrm{Int}\,G$ is
  the complement of the union of univalent vertices in $G$,
\item each univalent vertex of $G$ that is ``inward" to $G$ is
  attached to a $(d-2)$-sphere in $L$,
\item each univalent vertex of $G$ that is ``outward" from $G$ is
  attached to a $1$-sphere in $L$.
\end{enumerate}
\par\vspace{5mm}
\begin{figure}[!htbp]
  \[ \includegraphics[height=30mm]{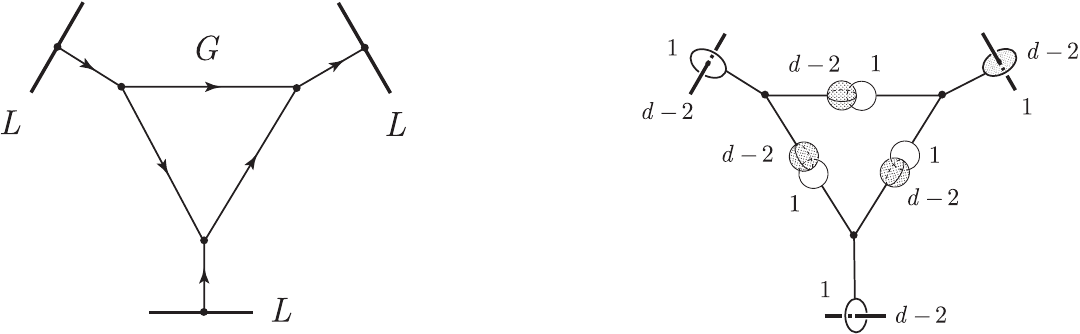} \]
\caption{A uni-trivalent graph $G$ attached to a link
  $L$ and the associated Y-link}\label{fig:G-NG-Y}
\end{figure}
We shall describe the effect of the surgery on $G$ in
Proposition~\ref{prop:family-emb}. To state
Proposition~\ref{prop:family-emb}, we introduce some notations.  We
take a small closed neighborhood $N(G)$ of $G$ such that its
intersection with $L$ consists of 1- and $(d-2)$-disks each of which
is a small neighborhood of a univalent vertex of $G$ in a component of
$L$. As before, we may construct a Y-link $G_1\cup\cdots\cup G_r$
inside $N(G)$ by putting a {framed} Hopf link at each edge of $G$
between trivalent vertices and by replacing each univalent vertex with
a leaf that bounds a disk in $N(G)$ transversally intersecting $L$ at
a point. We call such a leaf a {\it simple leaf} of $G$ relative to
$L$. Then we define surgery on $G$ by the surgery on the Y-link
$G_1\cup\cdots\cup G_r$.

Let $\mathbf{b}$ be a small $d$-disk and let $\mathbf{w}$
be the relative cobordism obtained from $\mathbf{b}\times I$ by
surgery along a small Hopf link $h$ in
$\mathrm{Int}\,\mathbf{b}\times\{1\}$. For a relative cobordism
$W$ between $\partial_-W={X}\times\{0\}$ and $\partial_+W\cong {X}$
such that $\partial W=\partial_+W\cup_{\partial
  {X}\times\{1\}}(\partial {X}\times I)\cup_{\partial
  {X}\times\{0\}}\partial_-W$, let $W_N$ denote the boundary connected
sum of $W$ and $N$ copies of $\mathbf{w}$ along disjoint union
of disks $D^{d-1}\times I\subset \partial {X}\times I$. (See
  Figure~\ref{fig:W_N}.)
\begin{figure}[!htbp]
\[ \includegraphics[height=40mm]{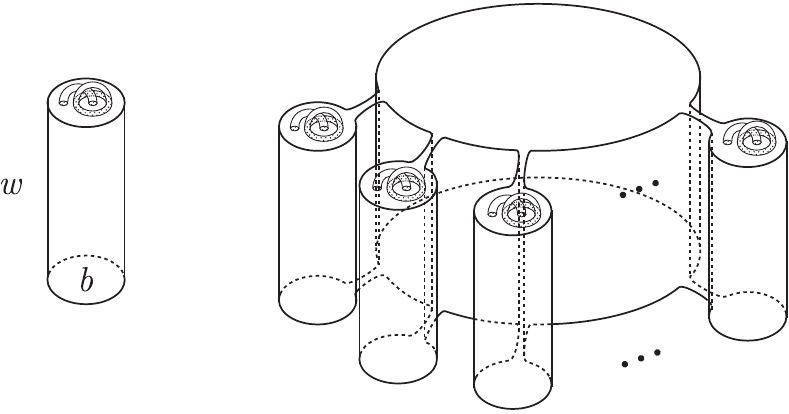} \]
\caption{The cobordism $W_N$}\label{fig:W_N}
\end{figure}

Let $B_G=S^{a_1}\times S^{a_2}\times\cdots\times S^{a_r}$, where
$a_i=0$ or $d-3$ depending on whether $G_i$ is of Type I or II,
respectively. Let $p^G:W^G\to B_G$ be the
$(W_{3r},\partial_\sqcup W_{3r})$-bundle obtained from the
trivialized $(X\times I)$-bundle over $B_G$ by surgery along the
associated set of families of {framed} Hopf links in {$X\times\{1\}$}
for $G$ as in Definitions~\ref{def:type-I} and \ref{def:type-II}.
\begin{Prop}\label{prop:family-emb}
Let $W=X\times[a,b]$, where we identify $X\times\{b\}$ with $X$.
Let $G$ be a connected uni-trivalent graph with $r$ trivalent
vertices, embedded in $\mathrm{Int}\,{X}$, and attached to some link
$L$ as above. Let $b_1,\ldots,b_{3r}$ be disjoint small $d$-balls in
$N(G)\setminus (L\cup G_1\cup\cdots\cup G_r)$ and let
$h_1,\ldots,h_{3r}$ be small Hopf links in $N(G)\setminus (L\cup
G_1\cup\cdots\cup G_r)$ such that $h_i\subset \mathrm{Int}\,b_i$. (See
Figure~\ref{fig:NG-b1}.) {Let $L_{N(G)}=L\cap N(G)$.} Then there is a
$B_G$-family of embeddings of the union of disks ${L_{N(G)}}$ into
$N(G)\setminus(b_1\cup\cdots\cup b_{3r})$:
\begin{equation*}
\Phi_s:{L_{N(G)}}\to N(G)\setminus(b_1\cup\cdots\cup b_{3r})\quad (s\in B_G)
\end{equation*}
that agree with the inclusion near $\partial N(G)$ such that there is
a concordance between the pairs
\begin{equation*}
(W^G,B_G\times L) \mbox{ and } 
  (B_G\times W^{h_1\cup\cdots\cup h_{3r}},\widetilde{L}),
\end{equation*}
where $\widetilde{L}=\bigcup_{s\in B_G} L_s$, which is a trivialized
subbundle of $B_G\times\partial_+W_{3r}=B_G\times{X}$, and $L_s$
is obtained from $L$ by replacing ${L_{N(G)}}$ by $\Phi_s$.  Moreover,
we may assume that the concordance is strictly trivial on
$B_G\times((X\setminus\mathrm{Int}\,N(G))\times[a,b])$, and for any
choice of a component $\ell$ of ${L_{N(G)}}$, we may assume that the
restriction of $\Phi_s$ to all the components in ${L_{N(G)}}\setminus
\ell$ does not depend on the parameter $s$, after a fiberwise isotopy,
which depends on the choice of $\ell$.
\end{Prop}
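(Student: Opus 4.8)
The plan is to reduce Proposition~\ref{prop:family-emb} to iterated applications of the single–Y–graph statements, Lemmas~\ref{lem:Y-surgery-link-I} and \ref{lem:Y-surgery-link-II}, working edge by edge along the graph $G$. First I would set up the Y-link $G_1\cup\cdots\cup G_r$ inside $N(G)$ as described, so that the surgery on $G$ is the surgery on this Y-link, and the given link $L$ meets each $G_i$ only in the simple leaves attached to univalent vertices. The base space $B_G=S^{a_1}\times\cdots\times S^{a_r}$ factors over the individual Y-graphs, so I can treat the Type I factors ($a_i=0$) as ordinary surgeries and the Type II factors ($a_i=d-3$) as $S^{d-3}$-families, exactly as in the two lemmas; the product structure of $B_G$ lets one apply the lemmas one coordinate at a time while keeping the other coordinates as spectators.

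The induction is on the number $r$ of trivalent vertices. For the inductive step I would pick a trivalent vertex $v$ of $G$ whose removal leaves the graph still connected in the relevant sense (e.g.\ a vertex adjacent to a univalent vertex, or a leaf of the tree obtained by contracting cycles), so that the Y-graph $G_v$ sits in a sub-ball $N(G_v)\subset N(G)$ whose leaves are each simple relative to the appropriate components of the evolving link (these components are the leaves of neighbouring Y-graphs, or components of $L$, depending on the edge orientations and on conditions (5)--(6)). Applying Lemma~\ref{lem:Y-surgery-link-I} (or Lemma~\ref{lem:Y-surgery-link-II} in the Type II case) to $G_v$ replaces the surgery on $G_v$, up to concordance strictly trivial outside $N(G_v)$, by surgery on three disjoint small Hopf links $h_{v,1},h_{v,2},h_{v,3}$ together with a modification of the linking components by connect-summing Borromean rings (resp.\ a family of framed links) inside $N(G_v)$. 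By part~(2) of those lemmas, after an isotopy we may arrange that two of the three modified components literally agree with the old ones, so that the only component that genuinely changes is the one we may freely designate — this is what will eventually yield the last sentence of the Proposition, since we can always choose that distinguished component to be the one on $\ell$. Iterating over all trivalent vertices converts every Y-graph into a triple of small Hopf links inside its own sub-ball, produces the $3r$ balls $b_1,\ldots,b_{3r}$ and Hopf links $h_1,\ldots,h_{3r}$, and assembles — by transitivity and juxtaposition of concordances, all strictly trivial outside $N(G)$ — the desired concordance between $(W^G,B_G\times L)$ and $(B_G\times W^{h_1\cup\cdots\cup h_{3r}},\widetilde L)$; the family of embeddings $\Phi_s$ of $L_{N(G)}$ is read off as the cumulative effect of all the connect-sum modifications on the part of $L$ inside $N(G)$.

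The main obstacle is bookkeeping the compatibility of the successive applications of the Hopf-surgery lemmas: when Lemma~\ref{lem:Y-surgery-link-I} is applied at vertex $v$, the components $c_i$ it modifies are leaves of adjacent Y-graphs that have not yet been surgered, so one must check that after the modification these components are still simple relative to the leaves of the next Y-graph to be processed, and that the "small ball" hypotheses remain satisfiable inside the shrinking available region $N(G)\setminus(b_1\cup\cdots\cup b_j)$. Here conditions (3)--(6) on $G$ (the matching of leaf dimensions to sphere dimensions via the orientation convention, $\dim\ell_i+\dim c_i=d-1$) are exactly what guarantee the dimension count in the simplicity hypothesis goes through at every stage, and Property~\ref{propt:brunnian} (Brunnian-ness of the Borromean string link) is what lets us discard the components that become null and keep the statement about $\Phi_s$ being $s$-independent on $L_{N(G)}\setminus\ell$. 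The family (Type II) case requires, in addition, that the $S^{d-3}$-families produced at different vertices be made disjoint and simultaneously trivialized on the overlap regions; this is handled as in the proof of Lemma~\ref{lem:Y-surgery-link-II}, using Remark~\ref{rem:family-hopf} to control the family of spanning disks $d_{1,s}$ so that the family of Hopf-link surgeries interacts with the other (spectator) components in a strictly trivial way outside the relevant sub-ball.
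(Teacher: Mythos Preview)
Your proposal is correct and follows essentially the same inductive approach as the paper: split off one Y-graph $G''$ at a trivalent vertex adjacent to a univalent vertex, apply Lemma~\ref{lem:Y-surgery-link-I} or \ref{lem:Y-surgery-link-II} there, and invoke the induction hypothesis on the remaining graph $G'$ with $r-1$ trivalent vertices. The paper handles your ``main obstacle'' a bit more cleanly than your re-verify-simplicity scheme: it first applies the induction hypothesis to $G'$ with its \emph{original} leaves to get $\varphi_{s'}:D'\to N'$, then notes that the $G''$-surgery modifies the leaves $\lambda$ of $G'$ to a family $\lambda_{s''}$, hence gives an $S^{a_r}$-family of embeddings of $G'$ itself, extends this by isotopy extension to embeddings $\varphi_{s''}':N'\to N(G)$ of the whole neighborhood, and then simply takes the composition $\varphi_{s''}'\circ\varphi_{s'}$---so no simplicity hypothesis ever needs to be re-checked for modified leaves.
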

\begin{figure}[!htbp]
\[ \includegraphics[height=45mm]{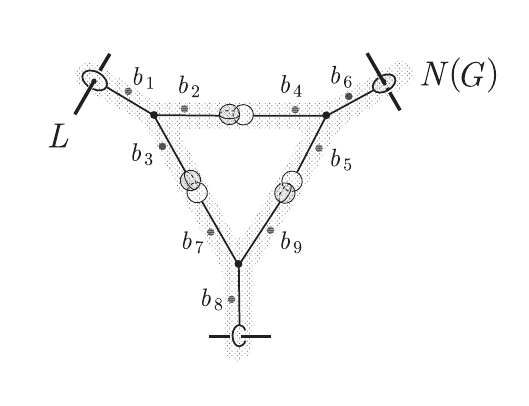} \]
\caption{The neighborhood $N(G)$ of the Y-link from
    Figure 15 is shaded, and each small ball $b_i$
    is embedded near the $i$-th edge of $G$}\label{fig:NG-b1}
\end{figure}

\begin{proof}
We prove this by induction on $r$. The case $r=1$ has been proved in
Lemma~\ref{lem:Y-surgery-link-I} or \ref{lem:Y-surgery-link-II}. 

For general $r$, we assume that the result holds true for connected
uni-trivalent graphs with at most $r-1$ trivalent vertices. Let $G$ be
a connected uni-trivalent graph with $r$ trivalent vertices as in the
statement. Then by assumption (condition (1) above) $G$ has a
trivalent vertex $v_r$ that is incident to a univalent vertex by a
single edge. We decompose $G$ into two parts by cutting the edges
incident to $v_r$ but not incident to univalent vertices, and apply
the induction hypothesis on the part with $r-1$ trivalent
vertices. More precisely, we put a framed Hopf link at each edge of
$G$ incident to $v_r$ but not incident to univalent vertices according
to the edge-orientation by the rule of
  Figure~\ref{fig:G-to-Y-link}, and then replace each univalent
vertex of $G$ with a simple leaf relative to $L$. We may assume that
this process yields a disjoint union of two connected objects: one is
a connected uni-trivalent graph with $r-1$ trivalent vertices with
some spheres attached to univalent vertices, and another is a
Y-graph. We denote the two components by $G'$ and $G''$,
respectively. We consider closed neighborhoods $N'$ and $N''$ of $G'$
and $G''$, respectively, given as follows. Let $\mu$ be the union of
spanning disks of the simple leaves of $G'$ each of which intersects
$L$ transversally by one point. Here, we assume that each leaf is a
round sphere about a point with small radius so that it has a
canonical flat spanning disk and we assume that $\mu$ is the union of
such flat disks. Now we take a small closed neighborhood $N'$ of
$G'\cup \mu$. Also, let $N''=N(G'')$. We assume that $N'$ is disjoint
from $G''$. On the other hand, $N''$ intersects $G'$ according to the
definition of $N(G'')$.

Let $\delta'$ be the union of $3(r-1)$ disjoint small Hopf links in $N'\setminus G'$. By induction hypothesis, we see that there are
$B_{G'}$-family of disks from $D'=L\cap N'$ inside $N'$:
\begin{equation*}
\varphi_{s'}:D'\to N'\quad (s'\in B_{G'}),
\end{equation*}
and a concordance between
\begin{equation*}
\begin{split}
  &(W^{G'},B_{G'}\times L)
  \mbox{ and }
  (B_{G'}\times W^{\delta'},
  \widetilde{L}'),
\end{split}
\end{equation*} 
where $\widetilde{L}'=\bigcup_{s'\in B_{G'}}L_{s'}$ and $L_{s'}$ is
obtained from $L$ by replacing {$D'$} by $\varphi_{s'}$. Moreover,
we may assume that the restriction of $\varphi_{s'}$ to the
components of $D'$ except one, is a strictly trivial family.

We next consider the effect of the surgery on $G''$.  Let $\lambda$ be
the disjoint union of all the leaves of $G'$ that intersect $N''$, and
let $L''=L\cup \lambda$.  Let $\delta''$ be the union of three
disjoint small Hopf links in $N''\setminus G''$. By
Lemma~\ref{lem:Y-surgery-link-I} or \ref{lem:Y-surgery-link-II}, we
see that there are $S^{a_r}$-families of leaves
$\widetilde{\lambda}=\bigcup_{s''\in S^{a_r}}\lambda_{s''}$ in $N'\cup
N''$ and a concordance between
\begin{equation*}
(W^{G''},S^{a_r}\times L'') \ \ \ \mbox{and} \ \ \ 
(S^{a_r}\times W^{\delta''},\widetilde{L}''),
\end{equation*}
  where $\widetilde{L}''=\widetilde{\lambda}\cup (S^{a_r}\times L)$,
  such that for each $s''$, $\lambda_{s''}$ agrees with $\lambda$ near
  the basepoint of the leaf. The replacement of $\lambda$ with
  $\lambda_{s''}$ gives a family of embeddings of $G'$ in
  $N(G)\setminus L$:
\begin{equation*}
  \varphi_{s''}:G'\to N(G)\setminus L\quad (s''\in S^{a_r}).
\end{equation*}
By isotopy extension, the family $\varphi_{s''}$ can be extended to a
family of embeddings $\varphi_{s''}':N'\to N(G)$ ($s''\in
S^{a_r}$).

Now we combine the two surgeries for $G'$ and $G''$. Let
$\widetilde{L}$ be the trivialized subbundle of $B_G\times {X}$
obtained from $B_G\times L$ by replacing $B_G\times L_{N'}$
 by the composition
\begin{equation*}
\varphi_{s''}'\circ \varphi_{s'}:D'\to
N(G)\quad (s',s'')\in B_G.
\end{equation*}
By the results of the previous paragraphs, we see that there is a
concordance between the pairs $(W^G,B_G\times L)$ and
$(B_G\times W^{\delta'\cup \delta''},\widetilde{L})$. Note that
the restriction of $\widetilde{L}$ to the components of $L_{N(G)}$
that intersect $N'$ is a strictly trivial family except one
component, and also the restriction of $\widetilde{L}$ to the
  components of $L_{N(G)}$ that intersects $N''$ is a strictly trivial
  family.  This completes the induction.
\end{proof}

\subsection{Replacing a trivalent graph with a family of Hopf links}
Now we shall nearly complete the proof of
Theorem~\ref{thm:bordism-simplify} (1), by proving the corresponding
statement for $B_\Gamma$-family instead of $S^{k(d-3)}$-family:

\begin{Prop}\label{prop:Hopf-Gamma}
Let $\Gamma$ be a labeled edge-oriented trivalent graph as in
section~\ref{s:graph-surgery} {with $2k$ vertices}. The
{$(X,\partial)$}-bundle $\pi^\Gamma:E^\Gamma\to B_\Gamma$ for an
embedding $\phi:\Gamma\to \mathrm{Int}\,{X}$ is concordant to a
{$(X,\partial)$}-bundle obtained from the product bundle $
S^{k(d-3)}\to S^{k(d-3)}\times {X}$ by fiberwise {surgeries} along a
$B_\Gamma$-family of framed links $h_s:S^1\cup S^{d-2}\to
\mathrm{Int}\,{X}$, $s\in B_\Gamma$, that satisfies the following
conditions:
\begin{enumerate}
\item $h_s$ is isotopic to the Hopf link for each $s$.
\item The restriction of $h_s$ to $S^{d-2}$ component is a constant
  $B_\Gamma$-family.
\item There is a small neighborhood $N$ of $\mathrm{Im}\,\phi$ such
  that the image of $h_s$ is included in $N$ for all $s\in B_\Gamma$.
\end{enumerate}
\end{Prop}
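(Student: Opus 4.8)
The plan is to reduce to Proposition~\ref{prop:family-emb}. Since $\Gamma$ is connected and trivalent it contains a cycle; choose an edge $e_0$ lying on a cycle, so that cutting $\Gamma$ at the midpoint of $e_0$ gives a \emph{connected} uni-trivalent graph $G$ with the same $2k$ trivalent vertices as $\Gamma$, of the same Type~I/II pattern (an in/out-degree count shows that exactly $k$ vertices are of each type, so $\dim B_\Gamma=k(d-3)$), together with two univalent vertices $u_1,u_2$ at the outgoing and incoming ends of $e_0$. Let $L=K_0\cup L_0$ be the framed Hopf link $S^1\cup S^{d-2}$ that Section~\ref{s:graph-surgery} places at the midpoint of $e_0$, with $u_1$ on $K_0=S^1$ and $u_2$ on $L_0=S^{d-2}$; then $(G,L)$ satisfies hypotheses (1)--(6) of Proposition~\ref{prop:family-emb}, and $B_G=B_\Gamma$. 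By a direct comparison of definitions, using the framed-link description of Y-surgery (Definitions~\ref{def:type-I}--\ref{def:type-II}, Remarks~\ref{rem:type-I}--\ref{rem:type-II}) and a handle slide across the $e_0$-Hopf link as in Lemma~\ref{lem:Hopf-surgery}, the bundle $\pi^\Gamma:E^\Gamma\to B_\Gamma$ is concordant to the top boundary of the bundle $(W^G)^L\to B_\Gamma$ obtained by first performing the $G$-surgery and then surgering fibrewise along $L$.

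Now apply Proposition~\ref{prop:family-emb} to $(G,L)$: it yields small Hopf links $h_1,\dots,h_{6k}$ in disjoint $d$-balls $b_i\subset N(G)$ and a $B_\Gamma$-family of embeddings $\Phi_s$ of $L_{N(G)}=L\cap N(G)$ so that the pairs $(W^G,B_\Gamma\times L)$ and $(B_\Gamma\times W^{h_1\cup\cdots\cup h_{6k}},\widetilde L)$ are concordant, where $\widetilde L=\bigcup_s L_s$ and $L_s$ is $L$ with $L_{N(G)}$ replaced by $\Phi_s$. Since concordant pairs have concordant surgeries, $E^\Gamma$ is concordant to the top of $(B_\Gamma\times W^{h_1\cup\cdots\cup h_{6k}})^{\widetilde L}$. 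But surgery along a small standard Hopf link contributes a cancelling $2$-handle/$(d-1)$-handle pair, and the traces of these cancellations sit inside $b_i\times I$, disjoint from $\widetilde L$; hence $W^{h_1\cup\cdots\cup h_{6k}}\cong X\times I$ rel the data that matters, and $E^\Gamma$ is concordant to the trivial $X$-bundle over $B_\Gamma$ surgered fibrewise along the family $h_s:=L_s$ (and since each $L_s$ is a Hopf link up to isotopy by the next paragraph, each fibre stays diffeomorphic to $X$, so this is indeed an $(X,\partial)$-bundle).

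It remains to verify (1)--(3). Condition (3) is clear: $L$ lies near the midpoint of $e_0\subset\mathrm{Im}\,\phi$ and all modifications take place inside $N(G)$, which may be taken inside any prescribed neighbourhood $N$ of $\mathrm{Im}\,\phi$. For (2), choose the distinguished component $\ell$ in Proposition~\ref{prop:family-emb} to be $K_0\cap N(G)$, so that $\Phi_s$, hence $h_s$, is a constant family on the $S^{d-2}$-component. Condition (1) is the delicate point: $h_s$ agrees with $L$ outside $N(G)$, its $S^{d-2}$-component is the standard $L_0$ for every $s$, and its $S^1$-component differs from $K_0$ only by the arc $\Phi_s(\ell)$ in $N(G)\setminus(b_1\cup\cdots\cup b_{6k})$ with endpoints fixed on $\partial N(G)$; by the Brunnian property of the Borromean rings (Property~\ref{propt:brunnian}) the modifications building $\Phi_s$ are trivial up to homotopy rel endpoints in the complement of $L_0$, and since a properly embedded arc in a manifold of dimension $d\ge 4$ is unknotted, $\Phi_s(\ell)$ is isotopic rel endpoints there to the original arc, whence $h_s$ is isotopic to the Hopf link $L$. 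The main obstacle is precisely this last step --- arranging, pointwise in $s$, this untangling compatibly with the iterated Borromean modifications at all $2k$ vertices --- but given Property~\ref{propt:brunnian} and general position for $1$-manifolds in codimension $\ge3$ it is routine; the remaining bundle-level bookkeeping and the cancellation of the $6k$ small Hopf links follow from the lemmas of Section~\ref{s:Ysurg-link} and the earlier part of this section.
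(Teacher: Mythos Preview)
Your proof is correct and follows essentially the same strategy as the paper's: the paper's ``Hopf chain'' trick (replacing the edge-Hopf link $a_1\cup b_1$ by $c_1\cup c_2\cup c_3\cup c_4$ and taking $L=c_2\cup c_3$) is exactly your ``cut $\Gamma$ at an edge $e_0$ on a cycle and let $L$ be the Hopf link placed there'', after which both arguments invoke Lemma~\ref{lem:Hopf-surgery}, apply Proposition~\ref{prop:family-emb}, and cancel the $6k$ small Hopf links. You are in fact slightly more careful than the paper in two places---insisting that $e_0$ lie on a cycle so that $G$ is connected (the paper's ``first edge'' could a priori be a bridge), and spelling out the Brunnian/general-position argument for condition~(1) that the paper leaves implicit---but these are refinements of the same proof rather than a different route.
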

We will prove this by trying to construct a concordance between the
families of cobordisms for the two surgeries and by restriction to the
top faces. Of course, there is no such concordance in the obvious
sense since the numbers of components of the framed links for
$\Gamma$-surgery and surgery along a family of Hopf-links are
different. We modify the assumption slightly so that a concordance
between the two families of cobordisms will make sense.

Now we set $W={X}\times I$ and let $p^\Gamma:W^\Gamma\to B_\Gamma$
be the $(W_{6k},\partial_\sqcup W_{6k})$-bundle obtained from the
trivial $W$-bundle by surgery along the associated family of
($6k\times 2=12k$ component) framed links in ${X}\times\{1\}$ for the
$\Gamma$-surgery. The restriction of this bundle to the top face gives
the former {$(X,\partial)$}-bundle $\pi^\Gamma:E^\Gamma\to B_\Gamma$
of Proposition~\ref{prop:Hopf-Gamma}.  The number $6k$ is because
there are $2k$ Y-graphs for the $\Gamma$-surgery each gives rise to 3
Hopf links.  On the other hand, the latter {$(X,\partial)$}-bundle of
Proposition~\ref{prop:Hopf-Gamma} is the top face of a
$(W_1,\partial_\sqcup W_1)$-bundle over $B_\Gamma$.

We add to $W^\Gamma$ one more Hopf link surgery without changing the
{$(X,\partial)$}-bundle on the top face, as follows. Let $G_1\cup
\cdots\cup G_{2k}$ be the Y-link for the embedding $\phi$ of
$\Gamma$. Let $a_1\cup b_1$ be the framed Hopf link for the first edge
of $\Gamma$ as in Figure~\ref{fig:G-to-Y-link}, which are leaves of
some Y-graphs. We replace $a_1\cup b_1$ by a framed ``Hopf chain"
$c_1\cup c_2\cup c_3\cup c_4$ such that
\begin{itemize}
\item $\dim{c_1}=\dim{c_3}=\dim{a_1}$, $\dim{c_2}=\dim{c_4}=\dim{b_1}$,
\item $c_i\cup c_{i+1}$ is a Hopf link for $i=1,2,3$.
\end{itemize} 
\begin{figure}[!htbp]
\[ \includegraphics[height=15mm]{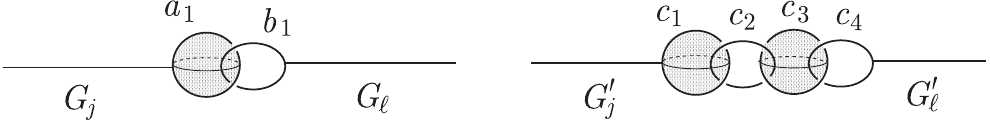} \]
\caption{The Hopf chain $c_1\cup c_2\cup c_3\cup
  c_4$}\label{fig:hopf-ch}
\end{figure}
Then the leaves $a_1$ and $b_1$ are replaced by $c_1$ and $c_4$,
respectively, and $G_1\cup \cdots\cup G_{2k}$ becomes a Y-link
$G_1'\cup \cdots \cup G_{2k}'$ that is linked to the Hopf link
$c_2\cup c_3$. The Y-link $G_1'\cup\cdots\cup G_{2k}'$ is the one
obtained from a uni-trivalent graph $G$ attached to the link
$L=c_2\cup c_3$, as in Proposition~\ref{prop:family-emb} below. By
Lemma~\ref{lem:Hopf-surgery}, this replacement does not change the
concordance class of the pair up to small Hopf links. Namely, there
are a small Hopf link $h$ in $\partial_+W$ that is disjoint from the
Y-link $G_1\cup\cdots\cup G_{2k}$ and $L$, and a family of concordances between
the pairs
\begin{equation*}
  (W^\Gamma,B_\Gamma\times h) \ \ \ \mbox{ and }
  \ \ \ (W^{G_1'\cup\cdots\cup G_{2k}'}, B_\Gamma\times(c_2\cup
    c_3))
\end{equation*}
parameterized by $B_\Gamma$.  Let
$p_1^\Gamma:W_1^\Gamma\to B_\Gamma$ be the $(W_{6k+1},\partial_\sqcup
W_{6k+1})$-bundle given by fiberwise surgery
\begin{equation*}
  (W^{G_1'\cup\cdots\cup G_{2k}'})^{B_\Gamma\times(c_2\cup c_3)}.
\end{equation*}
The number $6k+1$ is due to the addition of $c_2\cup c_3$. The newly
added Hopf link $c_2\cup c_3$ will serve as the family of Hopf links
$h_s$ of Proposition~\ref{prop:Hopf-Gamma}.

Proposition~\ref{prop:Hopf-Gamma} is an immediate corollary of the
following lemma, which gives an extension of
Proposition~\ref{prop:Hopf-Gamma} to cobordisms.
\begin{Lem}\label{lem:Gamma-concordance}
Let $\Gamma$ be as in Proposition~\ref{prop:Hopf-Gamma}.  The above
$(W_{6k+1},\partial_\sqcup W_{6k+1})$-bundle $p^\Gamma_1:W^\Gamma_1\to
B_\Gamma$ determined by an embedding $\phi:\Gamma\to
\mathrm{Int}\,{X}\times\{1\}$ is concordant to a
$(W_{6k+1},\partial_\sqcup W_{6k+1})$-bundle that is obtained from the product
$W$-bundle $B_\Gamma\times W\to B_\Gamma$ by fiberwise handle
attachments along some $B_\Gamma$-family of framed links $h_s:S^1\cup
S^{d-2}\to \mathrm{Int}\,{X}\times\{1\}$, $s\in B_\Gamma$, and
fiberwise boundary connected sums with $6k$ copies of the trivial
$(\mathbf{w},\partial_\sqcup \mathbf{w})$-bundle
$p_0:B_\Gamma\times \mathbf{w}\to B_\Gamma$,
where $h_s$ satisfies the conditions (1), (2), (3) of
Proposition~\ref{prop:Hopf-Gamma}.
\end{Lem}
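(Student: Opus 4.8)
The plan is to prove Lemma~\ref{lem:Gamma-concordance} by induction on the number of trivalent vertices $2k$ of $\Gamma$, using Proposition~\ref{prop:family-emb} as the engine. After the preparatory modification described above, the $\Gamma$-surgery bundle $p_1^\Gamma:W_1^\Gamma\to B_\Gamma$ is presented as a fiberwise surgery along the Y-link $G_1'\cup\cdots\cup G_{2k}'$ together with the extra Hopf link $c_2\cup c_3$ serving as the link $L$ of Proposition~\ref{prop:family-emb}. The key point is that $G=G_1'\cup\cdots\cup G_{2k}'$, viewed as a uni-trivalent graph attached to $L=c_2\cup c_3$, has trivalent vertices and at least one univalent vertex (the univalent vertices sit on $c_2$ and $c_3$), so Proposition~\ref{prop:family-emb} applies with $r=2k$. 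First I would invoke Proposition~\ref{prop:family-emb} to obtain a concordance between $(W^G,B_G\times L)$ and $(B_G\times W^{h_1\cup\cdots\cup h_{6k}},\widetilde L)$, where $\widetilde L$ is a $B_G$-family of embeddings of $L\cap N(G)$ into $N(G)$ minus the small balls, agreeing with the inclusion near $\partial N(G)$, and strictly trivial outside $N(G)$; here $B_G=B_\Gamma$ since each $G_i'$ has the same type as the corresponding Y-graph of $\Gamma$.

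Next I would perform the fiberwise handle attachment along $\widetilde L$ on both sides of that concordance. Attaching the $B_\Gamma$-family of handles along $L\cap N(G)$ to the left-hand side reconstructs $p_1^\Gamma$ (this is exactly how $W_1^\Gamma$ was built). On the right-hand side, the $6k$ small Hopf links $h_1,\ldots,h_{6k}$ are each contained in a disjoint small ball $b_i$ and carry trivial fiberwise handle attachments, so each $W^{h_i}$ contributes precisely a boundary-connected-sum with a copy of $\mathbf w$, giving $6k$ copies of the trivial $(\mathbf w,\partial_\sqcup \mathbf w)$-bundle $p_0$. What remains on the top face is the $B_\Gamma$-family $\widetilde L$, along which we attach handles. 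Since attaching handles along a trivialized framed subbundle commutes with the concordance (this is the observation recorded right after Definition~\ref{def:concordance1}, in its bundle version Definition~\ref{def:concordance-bundle}), the resulting bundle $(W^G)^{\widetilde L}=W_1^\Gamma$ is concordant to $(B_\Gamma\times W^{h_1\cup\cdots\cup h_{6k}})^{\widetilde L}$, which is the product $W$-bundle with $6k$ boundary sums of $\mathbf w$, further modified by fiberwise handle attachment along $\widetilde L$.

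It then remains to identify $\widetilde L$ with a $B_\Gamma$-family of framed links $h_s:S^1\cup S^{d-2}\to \mathrm{Int}\,X\times\{1\}$ satisfying conditions (1)--(3) of Proposition~\ref{prop:Hopf-Gamma}. The link $L=c_2\cup c_3$ is a Hopf link of dimensions matching $\dim a_1=1$ and $\dim b_1=d-2$, so for each $s$ the image $h_s=\widetilde L|_s$ is a modification of this Hopf link by the family of embeddings $\Phi_s$ produced by Proposition~\ref{prop:family-emb}. By the last clause of that proposition, for a chosen component $\ell$ of $L\cap N(G)$ we may arrange, after fiberwise isotopy, that $\Phi_s$ is constant in $s$ on the other component; choosing $\ell$ to be the $S^1$-component (i.e.\ the part of $c_2$, say, whichever has dimension $1$) gives condition (2). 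Condition (3) is immediate since all of $\widetilde L$ lives inside $N(G)\subset N$, a small neighborhood of $\mathrm{Im}\,\phi$. For condition (1), the point is that each $h_s$ is obtained from the original Hopf link $c_2\cup c_3$ by Borromean/Hopf connect-sum modifications governed by the Brunnian Property~\ref{propt:brunnian}: deleting the non-constant component leaves the constant one, and the two-component sublink is isotopic to the standard two-disk inclusion, so $h_s$ remains isotopic to the Hopf link for every $s$.

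The main obstacle I expect is bookkeeping the geometry of $N(G)$ against the ambient $X$: one must check that the neighborhoods, the spanning disks of the simple leaves, and the small balls $b_i$ can all be chosen disjointly and compatibly with the preparatory Hopf-chain replacement, and that the ``strictly trivial outside $N(G)$'' clauses of Proposition~\ref{prop:family-emb} and Lemma~\ref{lem:Hopf-surgery} glue to a genuine concordance of $(W_{6k+1},\partial_\sqcup W_{6k+1})$-bundles over $B_\Gamma$ rather than merely of the pieces. A secondary subtlety is verifying the framings: the canonical framings on the leaves of the Y-link and on $c_2\cup c_3$ must be tracked through the handle slides so that the final $h_s$ is genuinely a \emph{framed} family of Hopf links, but this is routine given that Proposition~\ref{prop:family-emb} is stated for framed links and trivialized framed subbundles throughout.
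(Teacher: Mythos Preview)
Your approach is correct and essentially identical to the paper's: both apply Proposition~\ref{prop:family-emb} directly to the Y-link $G_1'\cup\cdots\cup G_{2k}'$ attached to $L=c_2\cup c_3$, then read off the family $h_s$ from the resulting trivialized subbundle $\widetilde L$ (choosing $\ell$ so that the $S^{d-2}$-component is constant). Your opening phrase ``by induction on $2k$'' is a misnomer---you never actually induct, you simply invoke Proposition~\ref{prop:family-emb} (whose own proof is inductive) once with $r=2k$---but this does not affect the argument.
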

\begin{proof}
We assume without loss of generality that $\dim{c_2}=1$ and
$\dim{c_3}=d-2$. Applying Proposition~\ref{prop:family-emb} for the
Y-link $G_1'\cup\cdots\cup G_{2k}'$ and $L=c_2\cup c_3$, we see that
surgery on $G_1'\cup \cdots\cup G_{2k}'$ produces a $B_\Gamma$-family
of embeddings of $L\cap N(G)$ into $N(G)$, whose restriction to
$c_3\cap N(G)$ is a trivial family. This gives the desired family of
framed Hopf links.
\end{proof}

\section{Bordism modification to a $S^{k(d-3)}$-family
  of surgeries}\label{s:bordism}
\subsection{From a $B_\Gamma$-family to a $S^{k(d-3)}$-family}
We shall complete the proof of Theorem~\ref{thm:bordism-simplify} (1).
\begin{Prop}\label{prop:family-emb-bordism}
Let $G$ be a uni-trivalent graph attached to a framed link $L$, as in
Proposition~\ref{prop:family-emb}. The $B_G=S^{a_1}\times\cdots\times
S^{a_r}$-family of framed embeddings of disks {$L_{N(G)}=L\cap N(G)$}
in $N(G)$ of Proposition~\ref{prop:family-emb} can be deformed into an
$S^{a_1+\cdots+a_r}$-family by an oriented bordism in the space
$\fEmb_\partial({L_{N(G)}},N(G))$.
\end{Prop}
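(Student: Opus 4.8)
The plan is to reduce the $B_G$-family, where $B_G=S^{a_1}\times\cdots\times S^{a_r}$ and each $a_i\in\{0,d-3\}$, to a family over the single sphere $S^{a_1+\cdots+a_r}$ by a bordism inside $\fEmb_\partial(L_{N(G)},N(G))$. The starting point is the last clause of Proposition~\ref{prop:family-emb}: for each choice of a component $\ell$ of $L_{N(G)}$, after a fiberwise isotopy (depending on $\ell$) the family $\Phi_s$ restricted to $L_{N(G)}\setminus\ell$ is constant, so the nontrivial parametrized part of $\Phi_s$ concentrates on a single leaf. First I would use this to peel off the $\Type~I$ factors: each $S^{a_i}$ with $a_i=0$ is just $S^0=\{-1,1\}$, a basepoint together with one other point, and the family over that factor is (up to isotopy) a Borromean-twist applied on one leaf versus the identity. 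Collapsing the basepoint, the product $B_G=\prod_i S^{a_i}$ is, as a based space, the smash-like iterated construction whose ``top cell'' is $S^{\sum a_i}$; more precisely $\prod_i S^{a_i}$ has a CW structure whose only cell of dimension $\sum a_i$ is the product cell, and all lower cells carry families that extend over that cell by coning off (here is where the ``isotopic to constant off one leaf'' property is used repeatedly, once per factor).

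The key mechanism is a standard ``multilinearity'' argument for Borromean-type surgeries, analogous to the linearity of the Kontsevich/graph classes: if the parametrizing family over a product $S^{a}\times S^{b}$ of two factors is, up to fiberwise isotopy in $\fEmb_\partial$, supported on a single leaf and depends on the two coordinates ``separately'' in each variable, then restricting to $S^a\vee S^b\subset S^a\times S^b$ gives a null-bordant (in fact isotopically trivial) family, while the full family is bordant rel that wedge to a family pulled back from the quotient $S^a\times S^b/(S^a\vee S^b)=S^{a+b}$. Iterating over all $r$ factors, using Proposition~\ref{prop:family-emb}'s conclusion that at each stage only one component of $L_{N(G)}$ is parametrized, produces a family over $S^{a_1}\times\cdots\times S^{a_r}$ that is bordant in $\fEmb_\partial(L_{N(G)},N(G))$ to one pulled back along the collapse map $S^{a_1}\times\cdots\times S^{a_r}\to S^{a_1+\cdots+a_r}$, and the pulled-back family is exactly an $S^{a_1+\cdots+a_r}$-family in the required sense. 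The bordism is oriented because at each step the relevant mapping cone / quotient construction is orientation-compatible and the families involved are smoothly parametrized.

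Concretely I would carry out the steps in this order: (i) fix an ordering $\ell_1,\dots$ of the relevant leaves and, using the last sentence of Proposition~\ref{prop:family-emb}, arrange by fiberwise isotopy that the $S^{a_i}$-direction acts only on the $i$-th distinguished leaf; (ii) restrict the family to the $(\sum a_i-1)$-skeleton of $\prod_i S^{a_i}$ and check, factor by factor, that it bounds in $\fEmb_\partial$ — this is where the Brunnian Property~\ref{propt:brunnian} of the Borromean string link enters, since deleting any one parametrized leaf trivializes the embedding up to isotopy; (iii) use the resulting null-bordism of the skeleton to build, by gluing in the trace of the bordism, a bordism from the original family to one that factors through the collapse $\prod_i S^{a_i}\to S^{\sum a_i}$; (iv) record that all these moves take place rel $\partial N(G)$, so the bordism lives in $\fEmb_\partial(L_{N(G)},N(G))$ as claimed, and orient it. The main obstacle I expect is step (ii)–(iii): making precise that the family on the lower skeleton is canonically null-bordant and that the null-bordisms on overlapping faces are compatible, i.e. that the ``separately in each variable'' structure really does let one collapse the wedge coherently rather than just face-by-face — this is essentially the same bookkeeping that underlies the well-definedness of the graph classes, and I would expect to handle it by an induction on $r$ that mirrors the induction already used in the proof of Proposition~\ref{prop:family-emb}.
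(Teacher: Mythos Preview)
Your proposal is essentially correct and follows the same route as the paper: collapse $B_G$ to its top cell $S^{\sum a_i}$ by showing the family is null on the lower skeleton, invoke the Brunnian property for the nullity, and proceed by induction on $r$. The paper sharpens this in two ways you should be aware of. First, it proves the stronger Lemma~\ref{lem:family-emb-bordism}: the map $B_G\to\fEmb_\partial(L_{N(G)},N(G))$ factors \emph{up to homotopy} through a degree~$1$ map $B_G\to S^{\sum a_i}$, which immediately yields the oriented bordism. Second, the induction is organized so that at each step one only faces a product of \emph{two} spheres $S^{a_1+\cdots+a_{r-1}}\times S^{a_r}$; the compatibility issue you correctly flag in (ii)--(iii) is then dispatched by a dedicated lemma (Lemma~\ref{lem:BY}, quoted from \cite{Wa18a}): given a pointed null-homotopy of $g:B\to Y$ and another of $g|_A$ (with $A$ the fat wedge), $g$ factors through $B/A\simeq S^{\sum a_i}$. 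The required nullity on the two sphere factors comes respectively from the inductive hypothesis and from Lemma~\ref{lem:null-leaf-I} (the null-leaf lemma), which is the precise form of the Brunnian input you invoke via Property~\ref{propt:brunnian}.

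One correction: your step~(i) misreads Proposition~\ref{prop:family-emb}. That proposition lets you concentrate \emph{all} of the $B_G$-dependence on a single chosen component $\ell$ of $L_{N(G)}$; it does not distribute different $S^{a_i}$-factors over different leaves. Fortunately your steps (ii)--(iii) do not actually use that distribution, so the slip is harmless --- but it means your heuristic ``deleting any one parametrized leaf'' should be replaced by ``setting any one $S^{a_i}$-coordinate to the basepoint trivializes the corresponding Y-surgery, which then propagates via the null-leaf lemma.''
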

To prove Proposition~\ref{prop:family-emb-bordism}, we shall instead
prove the following stronger lemma.
\begin{Lem}\label{lem:family-emb-bordism}
The map $B_G\to \fEmb_\partial({L_{N(G)}},N(G))$ for the $B_G$-family
of Proposition~\ref{prop:family-emb} factors up to homotopy over a map
$B_G\to S^{a_1+\cdots+a_r}$ of degree 1.
\end{Lem}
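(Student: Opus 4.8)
The plan is to prove this by induction on the number $r$ of trivalent vertices of $G$, mirroring the inductive structure already set up in the proof of Proposition~\ref{prop:family-emb}. The base case $r=1$ is essentially tautological: $B_G$ is a single sphere $S^{a_1}$ (either $S^0$ or $S^{d-3}$), and the family of embeddings produced by Lemma~\ref{lem:Y-surgery-link-I} or Lemma~\ref{lem:Y-surgery-link-II} is already a map defined on that sphere, which we take to be the degree-$1$ map to itself. For the inductive step, recall that the construction in Proposition~\ref{prop:family-emb} splits $G$ at a trivalent vertex $v_r$ adjacent to a univalent vertex into a subgraph $G'$ with $r-1$ trivalent vertices and a single Y-graph $G''$, and the resulting $B_G = B_{G'}\times S^{a_r}$-family of embeddings of $L_{N(G)}$ is built by composing the $B_{G'}$-family $\varphi_{s'}$ coming from $G'$ (extended by the identity outside $N'$) with the $S^{a_r}$-family $\varphi''_{s''}$ coming from the single Y-surgery on $G''$, after the latter is promoted via isotopy extension to an ambient family $\varphi'_{s''}:N'\to N(G)$.

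The key point I would extract is that the classifying map $B_G \to \fEmb_\partial(L_{N(G)}, N(G))$ factors as a composition of two maps, one controlled by the $B_{G'}$-variable and one by the $S^{a_r}$-variable, and that these are "independent" in the sense that the $S^{a_r}$-factor acts by an ambient isotopy that does not interfere with the $B_{G'}$-family. Concretely, I would argue that the map on $B_{G'}\times S^{a_r}$ is homotopic to one of the form $(s',s'')\mapsto \Psi(s'')\circ \varphi_{s'}$ where $\Psi: S^{a_r}\to \Diff_\partial(N(G))$ and $\varphi: B_{G'}\to \fEmb_\partial(L_{N'},N')\hookrightarrow \fEmb_\partial(L_{N(G)},N(G))$; by the induction hypothesis the latter factors up to homotopy through a degree-$1$ map $B_{G'}\to S^{a_1+\cdots+a_{r-1}}$. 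Composing the smash of this with the identity on $S^{a_r}$ and then using the product-to-smash collapse $S^{a_1+\cdots+a_{r-1}}\times S^{a_r}\to S^{a_1+\cdots+a_r}$ (which has degree $1$ on top homology), together with a check that the "action" map $S^{a_1+\cdots+a_{r-1}}\times S^{a_r}\to \fEmb_\partial(L_{N(G)},N(G))$ descends to the smash, gives the desired degree-$1$ factorization through $S^{a_1+\cdots+a_r}$.

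The main obstacle I anticipate is verifying that the combined map genuinely descends (up to homotopy) to the smash product, i.e.\ that it is null when either sphere coordinate is at the basepoint. For the $S^{a_r}$-coordinate this is built in, since at the basepoint of $S^{a_r}$ the Y-surgery on $G''$ is strictly trivial (the leaf family $\lambda_{s''}$ agrees with $\lambda$ near the basepoint, and Property~\ref{propt:brunnian} makes the two undeformed leaves disappear into standard disks), so $\varphi'_{s''}$ is the identity and the whole map reduces to the $G'$-family. For the $B_{G'}$-coordinate one uses the inductive hypothesis: at the basepoint the $G'$-family is already trivial on all but one component of $L_{N'}$ after fiberwise isotopy, and one must check the remaining component's contribution is absorbed — here the freedom in Proposition~\ref{prop:family-emb} to choose which component $\ell$ carries the nontrivial part is exactly what is needed, provided $\ell$ can be taken to be a leaf lying in $N'$ and not meeting $N''$. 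I would carry out this compatibility check carefully, tracking the basepoint conditions through the isotopy-extension step, since that is where the argument could break if the extensions $\varphi'_{s''}$ were not chosen to be the identity near the relevant basepoints.
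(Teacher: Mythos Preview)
Your inductive scheme is the same as the paper's, but there is a genuine gap at the descent-to-smash step. To factor $\overline{g}_r:S^{a_1+\cdots+a_{r-1}}\times S^{a_r}\to \fEmb_\partial(L_{N(G)},N(G))$ through the smash, you must show that the restriction to $S^{a_1+\cdots+a_{r-1}}\times\{*\}$ is null-homotopic. At $s''=*$ this restriction is exactly the factored map $\overline{g}_{r-1}$ from the previous step, and your inductive hypothesis only says that $g_{r-1}$ \emph{factors through} $S^{a_1+\cdots+a_{r-1}}$, not that the factored map is null. Saying it ``reduces to the $G'$-family'' does not settle anything; you still owe a null-homotopy of that family on the sphere. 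The paper closes this gap with an extra ingredient you have not invoked: first one observes that the unfactored map $g_{r-1}:B_{G'}\to\fEmb_\partial$ is itself null-homotopic (ultimately via the null-leaf argument, Lemma~\ref{lem:null-leaf-I}), and then one applies Lemma~\ref{lem:BY} to conclude that the factored map $\overline{g}_{r-1}$ is null-homotopic as well. Without something playing the role of Lemma~\ref{lem:BY}, the induction does not close.

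Your treatment of the other wedge factor is also off. At the basepoint of $B_{G'}$ (i.e.\ $s'=*$) the remaining $S^{a_r}$-family is the effect of the single Y-surgery $G''$ on $L_{N(G)}$; the nullity here comes from Lemma~\ref{lem:null-leaf-I} (only one leaf of $G''$ meets $L$, the others meet leaves of $G'$, so the Brunnian property makes the effect on $L$ trivial), not from the ``freedom to choose $\ell$'' in Proposition~\ref{prop:family-emb}. That freedom is relevant to the \emph{moreover} clause of Proposition~\ref{prop:family-emb}, not to the descent argument here.
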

\begin{proof}
We prove this by induction on $r$. The case $r=1$ is obvious. Assume
that the map
\begin{equation*}
g_{r-1}:B_{G'}=S^{a_1}\times\cdots\times S^{a_{r-1}}\to
\fEmb_\partial({L_{N(G)}},N(G'))
\end{equation*}
for a Y-link $G_1\cup\cdots\cup G_{r-1}$ that corresponds to a
connected uni-trivalent graph $G'$ factors up to homotopy into a
degree 1 map $S^{a_1}\times\cdots\times S^{a_{r-1}}\to
S^{a_1+\cdots+a_{r-1}}$ and a map
\begin{equation*}
  \overline{g}_{r-1}:S^{a_1+\cdots+a_{r-1}}\to \fEmb_\partial({L_{N(G')}},N(G')).
\end{equation*}
Since $g_{r-1}$ is null-homotopic, one may apply Lemma~\ref{lem:BY}
below, and the map $\overline{g}_{r-1}$ is null-homotopic.

Adding one more Y-graph $G_r$ so that $G_1\cup\cdots\cup G_r$
corresponds to a connected uni-trivalent graph $G$, we obtain a map
$g_r:B_{G'}\times S^{a_r}\to \fEmb_\partial({L_{N(G)}},N(G))$ that
factors up to homotopy over a degree 1 map $B_{G'}\times S^{a_r}\to
S^{a_1+\cdots+a_{r-1}}\times S^{a_r}$.

The restrictions of the induced map
\begin{equation*}
  \overline{g}_r:S^{a_1+\cdots+a_{r-1}}\times S^{a_r}\to
  \fEmb_\partial({L_{N(G)}},N(G))
\end{equation*}
to the subspaces $S^{a_1+\cdots+a_{r-1}}\times\{*\}$ and $\{*\}\times
S^{a_r}$ of $S^{a_1+\cdots+a_{r-1}}\times S^{a_r}$ are pointed
null-homotopic in $\fEmb_\partial({L_{N(G)}},N(G))$ by
Lemma~\ref{lem:null-leaf-I} and by the
nullity of $\overline{g}_{r-1}$ in
\begin{equation*}
  \fEmb_\partial({L_{N(G')}},N(G')).
\end{equation*}
Thus the map $\overline{g}_r$ factors up to homotopy over a degree 1
map $S^{a_1+\cdots+a_{r-1}}\times S^{a_r}\to S^{a_1+\cdots+a_r}$.
\end{proof}
\begin{Lem}[{\cite[Proof of Lemma~B]{Wa18a}}]\label{lem:BY}
Let $B=S^{a_1}\times S^{a_2}\times\cdots\times S^{a_s}$ and let
\begin{equation*}
\begin{split}
  A=(\{*\}\times S^{a_2}\times\cdots\times S^{a_s})
  \cup (S^{a_1}\times\{*\}\times\cdots\times S^{a_s})
  \cup\cdots\cup (S^{a_1}\times S^{a_2}\times\cdots\times\{*\}),
\end{split}
\end{equation*}
For a space $Y$, suppose that we have a pointed
null-homotopy of a pointed map $g: B\to Y$ and another pointed
null-homotopy of the restriction $g|_A:A\to Y$. Then $g$ can be
factored up to homotopy into a pointed map $B\to B/A\simeq
S^{a_1+\cdots a_s}$ and a null-homotopic map $B/A\to Y$.
\end{Lem}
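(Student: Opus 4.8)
The plan is to derive the statement from the Puppe cofibre sequence of the cofibration $i\colon A\hookrightarrow B$, using only the formal exactness of pointed homotopy sets along such a sequence together with one classical desuspension fact. First I would record the two features of the CW pair $(B,A)$ that do not involve $g$ or $Y$. Writing $N:=a_1+\cdots+a_s$ (we may assume $N\geq 1$, the case $N=0$ being immediate since then $B$ is discrete), the quotient $B/A$ is the iterated smash product $S^{a_1}\wedge\cdots\wedge S^{a_s}\cong S^{N}$, and the collapse map $q\colon B\to B/A$ has degree one onto it. Moreover $B$ is obtained from $A$ by attaching a single $N$-cell, $B=A\cup_\phi e^{N}$, along a map $\phi\colon S^{N-1}\to A$ which, by construction of the product CW structure, represents the $s$-fold higher Whitehead product of the fundamental classes of the factors.

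The one genuinely non-formal ingredient I would isolate is that the connecting map $\delta\colon B/A\to\Sigma A$ in the Puppe sequence $A\xrightarrow{i}B\xrightarrow{q}B/A\xrightarrow{\delta}\Sigma A\xrightarrow{\Sigma i}\Sigma B\to\cdots$ is null-homotopic. Since $\delta$ coincides, up to sign, with the suspension $\Sigma\phi$ of the top-cell attaching map, this is precisely the classical fact that higher Whitehead products suspend to zero; equivalently, the natural splitting $\Sigma(X\times Z)\simeq\Sigma X\vee\Sigma Z\vee\Sigma(X\wedge Z)$ exhibits $\Sigma i\colon\Sigma A\to\Sigma B$ (up to equivalence) as the inclusion of a wedge summand, hence as a split monomorphism, which forces $\delta\simeq *$. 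Granting this, I would apply $[\,\cdot\,,Y]$ (pointed homotopy classes) to the Puppe sequence: exactness at $[B/A,Y]$ reads $\ker q^{*}=\mathrm{im}\,\delta^{*}$, and since $\delta\simeq *$ makes $\delta^{*}$ constant, $\ker q^{*}=\{*\}$; in other words, any class in $[B/A,Y]$ that maps to $0$ in $[B,Y]$ is already $0$.

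Now the two hypotheses do the rest. The null-homotopy of $g|_A$ gives $i^{*}[g]=[g|_A]=0$ in $[A,Y]$, so by exactness at $[B,Y]$ there exists $\bar g\in[B/A,Y]$ with $q^{*}[\bar g]=[g]$; that is, $g$ and $\bar g\circ q$ are homotopic as pointed maps, and since $q\colon B\to B/A\cong S^{N}$ is a degree-one pointed map this is exactly the asserted factorisation of $g$. The null-homotopy of $g$ then says $[g]=0$ in $[B,Y]$, hence $q^{*}[\bar g]=0$, and by the previous paragraph $[\bar g]=0$; thus the factor $\bar g\colon B/A\to Y$ is itself null-homotopic, completing the argument.

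The only step I expect to require care is the triviality of $\delta$ (equivalently $\ker q^{*}=\{*\}$); the remainder is routine bookkeeping with the cofibre sequence and the cofibration $(B,A)$. Note that it is exactly this triviality which upgrades the $g|_A$-hypothesis from yielding merely \emph{some} factorisation to one in which $\bar g$ is determined, so that the $g$-hypothesis can be read off on that single factor.
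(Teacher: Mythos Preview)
Your argument is correct. The paper does not actually prove this lemma; it is stated with a citation to \cite{Wa18a} and no proof is given here, so there is nothing to compare against directly. Your route via the Puppe sequence of the cofibration $A\hookrightarrow B$ is the natural one: exactness at $[B,Y]$ produces the factorisation $\bar g$ from the null-homotopy of $g|_A$, and exactness at $[B/A,Y]$ together with $\delta\simeq *$ forces $[\bar g]=*$ once $[g]=*$. You are right to flag the single nontrivial input, namely that the connecting map $\delta\colon B/A\to\Sigma A$ is null. Your identification of $\delta$ with $\pm\Sigma\phi$ (the suspension of the top-cell attaching map) is standard, and for $s=2$ your splitting argument for $\Sigma(X\times Z)$ is exactly what is needed; note that for general $s$ the same conclusion holds because the James splitting of $\Sigma(S^{a_1}\times\cdots\times S^{a_s})$ exhibits $\Sigma q$ as projection onto the top wedge summand $S^{N+1}$, so $\Sigma q$ has a section and hence $\Sigma i$ has a retraction (this is a special case of the polyhedral-product splitting of \cite{BBCG}, which the paper lists in its bibliography presumably for this reason). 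In the paper's applications only the case $s=2$ is invoked (in Lemma~\ref{lem:family-emb-bordism} one has $S^{a_1+\cdots+a_{r-1}}\times S^{a_r}$), so even the two-factor version you spell out would suffice there.
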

\begin{Cor}\label{cor:bordism}
The $B_\Gamma$-family of framed links $h_s:S^1\cup S^{d-2}\to
\mathrm{Int}\,{X}$, $s\in B_\Gamma$, in
Proposition~\ref{prop:Hopf-Gamma} can be deformed by a bordism in the
space of embeddins, into a $S^{k(d-3)}$-family of framed embeddings $S^1\cup S^{d-2}\to \mathrm{Int}\,X$ that satisfies the following conditions:
\begin{enumerate}
\item $h_s$ is isotopic to the Hopf link for each $s$.
\item The restriction of $h_s$ to $S^{d-2}$ component is a constant
  $S^{k(d-3)}$-family.
\item There is a small neighborhood $N$ of $\mathrm{Im}\,\phi$ such
  that the image of $h_s$ is included in $N$ for all $s\in
  S^{k(d-3)}$.
\end{enumerate}
Hence, fiberwise handle attachments along the family of embeddings
over the bordism gives a bundle bordism of cobordism bundles
$p_1^\Gamma:W_1^\Gamma\to B_\Gamma$ to a
$(W_{6k+1},\partial_\sqcup W_{6k+1})$-bundle over $S^{k(d-3)}$,
which restricts on the top face to a $({X},\partial)$-bundle bordism
between $\pi^\Gamma:E^\Gamma\to B_\Gamma$ and a
$({X},\partial)$-bundle $\varpi^\Gamma:\overline{E}^\Gamma\to
S^{k(d-3)}$.
\end{Cor}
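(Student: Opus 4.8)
The plan is to combine Lemma~\ref{lem:Gamma-concordance}, Proposition~\ref{prop:family-emb-bordism} and Lemma~\ref{lem:family-emb-bordism}, and then restrict the whole construction to the top faces. Recall from Lemma~\ref{lem:Gamma-concordance} that $p_1^\Gamma:W_1^\Gamma\to B_\Gamma$ is concordant to the $(W_{6k+1},\partial_\sqcup W_{6k+1})$-bundle obtained from the product $W$-bundle $B_\Gamma\times W\to B_\Gamma$ by fibrewise handle attachment along the $B_\Gamma$-family of framed links $h_s:S^1\cup S^{d-2}\to\mathrm{Int}\,X\times\{1\}$ together with $6k$ fibrewise boundary connected sums with the trivial $(\mathbf{w},\partial_\sqcup\mathbf{w})$-bundle. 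Consequently it suffices to produce an oriented bordism of families of framed links, with image staying inside the neighbourhood $N$ and with conditions (1)--(3) of Proposition~\ref{prop:Hopf-Gamma} in force throughout, from the $B_\Gamma$-family $\{h_s\}$ to some $S^{k(d-3)}$-family. Once such a bordism $Y$ (a compact oriented manifold with $\partial Y=B_\Gamma\sqcup S^{k(d-3)}$, carrying a $Y$-family of framed links) is in hand, performing the above fibrewise handle attachments and boundary connected sums over $Y$ yields the claimed bundle bordism of cobordism bundles, and restricting fibrewise to $X\times\{1\}$ yields the $(X,\partial)$-bundle bordism.

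To build the bordism of link families, observe that, by the construction in the proof of Lemma~\ref{lem:Gamma-concordance} (which applies Proposition~\ref{prop:family-emb} to the Y-link $G_1'\cup\cdots\cup G_{2k}'$ attached to $L=c_2\cup c_3$, with $\dim c_2=1$ and $\dim c_3=d-2$), the family $\{h_s\}$ is obtained from the $B_\Gamma$-family of framed embeddings $\Phi_s:L_{N(G)}\to N(G)$ of Proposition~\ref{prop:family-emb} by gluing $\Phi_s$ to the fixed inclusion of $c_2\cup c_3$ outside $N(G)$. Here the uni-trivalent graph $G$ associated to $\Gamma$ has $r=2k$ trivalent vertices (of which exactly $k$ are of Type~II, so $B_G=B_\Gamma$ and $a_1+\cdots+a_r=k(d-3)$) and two univalent vertices, one on $c_2$ and one on $c_3$. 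Proposition~\ref{prop:family-emb-bordism}, proved via Lemma~\ref{lem:family-emb-bordism}, deforms $\Phi$ --- through an oriented bordism inside $\fEmb_\partial(L_{N(G)},N(G))$ --- into an $S^{k(d-3)}$-family. Restricting this bordism of families of disk-embeddings to the two components of $L_{N(G)}$ and regluing the fixed exterior part of $c_2\cup c_3$ produces the required oriented bordism of families of framed links $S^1\cup S^{d-2}\to N(G)\subset N$.

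It remains to check conditions (1)--(3) along this bordism. Condition~(3) is immediate, everything happening inside $N(G)$. Condition~(2) holds because Proposition~\ref{prop:family-emb} lets us take the distinguished leaf to be the $S^1$-component, so the $S^{d-2}$-component of $\Phi_s$ is already a strictly trivial family, and the bordism produced by Lemma~\ref{lem:family-emb-bordism} is assembled, via Lemma~\ref{lem:BY}, from null-homotopies coming from Lemma~\ref{lem:null-leaf-I}, each of which moves a single leaf at a time and hence can be arranged to keep the $S^{d-2}$-component rigid. Condition~(1) holds because, at every stage of the bordism, the two-component link in question is obtained from the Hopf link $c_2\cup c_3$ by connect-summing one component with Borromean-type data, and is therefore still isotopic to a Hopf link by Property~\ref{propt:brunnian} (the Brunnian property of the Borromean rings). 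With (1)--(3) secured, carrying out the fibrewise handle attachments and the $6k$ boundary connected sums with the trivial $\mathbf{w}$-bundle over $Y$ produces a $(W_{6k+1},\partial_\sqcup W_{6k+1})$-bundle over $Y$ which restricts to a bundle concordant to $p_1^\Gamma$ over $B_\Gamma$ and to a $(W_{6k+1},\partial_\sqcup W_{6k+1})$-bundle over $S^{k(d-3)}$; since the $\mathbf{w}$-summands modify only a collar of $\partial X$ and disappear upon restricting to $X\times\{1\}$, the fibrewise top-face restriction is an $(X,\partial)$-bundle bordism between $\pi^\Gamma:E^\Gamma\to B_\Gamma$ and the $(X,\partial)$-bundle $\varpi^\Gamma:\overline{E}^\Gamma\to S^{k(d-3)}$ obtained from $S^{k(d-3)}\times X\to S^{k(d-3)}$ by fibrewise surgery along the resulting $S^{k(d-3)}$-family of framed Hopf links --- which, with properties (1)--(3), is precisely the statement of Theorem~\ref{thm:bordism-simplify}(1).

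The step I expect to be the main obstacle is maintaining condition~(1) throughout the bordism: the homotopy furnished by Lemma~\ref{lem:family-emb-bordism} is built inductively, splicing in at each stage a null-homotopy arising from Lemma~\ref{lem:null-leaf-I}, and one must verify that each such null-homotopy can be realized among families of framed embeddings for which the ambient two-component sublink $c_2\cup c_3$ never leaves the Hopf-link isotopy class and stays inside the prescribed neighbourhood --- this is exactly the point at which Property~\ref{propt:brunnian} is used. The remaining bookkeeping --- keeping the bordism rel the $\partial_\sqcup$-part and extending the $6k$ trivial $\mathbf{w}$-summands over $Y$ --- is routine.
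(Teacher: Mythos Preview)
Your argument is correct and follows essentially the same route as the paper: apply Proposition~\ref{prop:family-emb-bordism} (proved via Lemma~\ref{lem:family-emb-bordism}) to the $B_\Gamma$-family of disk embeddings produced in Lemma~\ref{lem:Gamma-concordance}/Proposition~\ref{prop:family-emb}, reglue the fixed exterior of $c_2\cup c_3$, and then perform the fibrewise handle attachments over the resulting bordism. The paper records this only as an unproved corollary of the preceding results; you have supplied the bookkeeping (the count $n_{\mathrm{II}}=k$ giving $a_1+\cdots+a_{2k}=k(d-3)$, the verification of (1)--(3), and the passage to top faces) that the paper leaves implicit.

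One small comment: your worry about maintaining condition~(1) \emph{throughout} the bordism is somewhat stronger than what the corollary literally asks for --- the statement only requires (1)--(3) for the terminal $S^{k(d-3)}$-family --- but you are right that it is needed for the ``Hence'' clause, since otherwise the top-face restriction over the interior of the bordism would not be an $(X,\partial)$-bundle. Your resolution via Property~\ref{propt:brunnian}, together with keeping the $S^{d-2}$-component rigid (so the linking number with $c_3$ stays $+1$ and the $S^1$ component remains unknotted by the Brunnian property), is the correct one and is implicit in the paper's inductive construction.
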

\subsection{Modification into a family of $h$-cobordisms}
We prove Theorem~\ref{thm:bordism-simplify} (2).
\begin{Prop}\label{prop:pseudoisotopy}
There exists a $({X}\times I,\partial_\sqcup ({X}\times I))$bundle $\Pi^\Gamma: W^\Gamma_h\to S^{k(d-3)}$ such that
\begin{enumerate}
\item the fiberwise restriction of $\Pi^\Gamma$ to ${X}\times \{1\}$
  is $\varpi^\Gamma$,
\item $W^\Gamma_h$ is obtained by attaching $S^{k(d-3)}$-families of
  1- and 2-handles to the product ${X}\times I$-bundle
  $S^{k(d-3)}\times ({X}\times I)\to S^{k(d-3)}$ at
  $S^{k(d-3)}\times ({X}\times\{1\}).$.
\end{enumerate}
\end{Prop}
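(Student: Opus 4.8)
The plan is to manufacture $W^\Gamma_h$ out of the $S^{k(d-3)}$-family of Hopf-link surgeries supplied by Corollary~\ref{cor:bordism}, exchanging the constant $(d-1)$-handle coming from the $S^{d-2}$-component of the link for a constant $1$-handle. By Corollary~\ref{cor:bordism}, $\varpi^\Gamma$ is the $(X,\partial)$-bundle obtained from the product bundle $S^{k(d-3)}\times X\to S^{k(d-3)}$ by fibrewise surgery along an $S^{k(d-3)}$-family of framed Hopf links $h_s=K_s\cup L$, where $K_s$ is a circle, $L$ is an unknotted $S^{d-2}$ with its standard framing which does not depend on $s$, and the whole family sits inside a fixed ball $b\subset\mathrm{Int}\,X$ near $\mathrm{Im}\,\phi$. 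Since $L$ is constant, I would first carry out the $L$-surgery: it replaces $S^{k(d-3)}\times X$ by $S^{k(d-3)}\times X_1$, where $X_1:=X^L\cong X\#(S^1\times S^{d-1})$ is a fixed manifold, and $\varpi^\Gamma$ becomes the bundle obtained from $S^{k(d-3)}\times X_1$ by fibrewise surgery along the family of framed circles $\gamma_s:=K_s\subset X_1$. Since $h_s$ is isotopic, as a framed link, to the standard Hopf link and $K_s$ links $L$ once, each $\gamma_s$ is isotopic in $X_1$ to the core circle $c_0$ of the solid torus $D^{d-1}\times S^1$ glued in by the $L$-surgery, carrying the framing for which surgery along $c_0$ recovers $X$.

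Next I would realize the same $X_1$ as the top of a $1$-handle cobordism. Attaching a $(d+1)$-dimensional $1$-handle $h^1$ to $X\times I$ at $X\times\{1\}$ along a trivial $S^0$ inside $b$ yields an elementary cobordism $C_1:=X\times I\cup h^1$ with $\partial_-C_1=X$ and $\partial_+C_1\cong X\#(S^1\times S^{d-1})=X_1$. I would fix a diffeomorphism $\partial_+C_1\cong X_1$ carrying the dual loop of $h^1$, i.e. the circle meeting the belt sphere $S^{d-1}$ of $h^1$ transversally in one point, to the framed circle $c_0$ above. The existence of such a diffeomorphism is the one genuinely geometric point: it amounts to the assertion that an embedded framed circle in a manifold of dimension $d\ge4$ representing a prescribed free homotopy class and on which surgery has a prescribed effect is unique up to framed ambient isotopy. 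General position makes the underlying circles isotopic, and then the normal framing must be pinned down: only a $\mathbb{Z}/2$ ambiguity is available, and it is cut out by the requirement that surgery recover $X$. I expect this bookkeeping --- getting the framing right so that the resulting handle pair will be \emph{geometrically} cancelling --- to be the main obstacle. Transporting the family $\gamma_s$ through this identification, I obtain an $S^{k(d-3)}$-family of framed attaching circles in $\partial_+C_1$, each isotopic to the dual loop of $h^1$ with the cancelling framing.

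I would then define $\Pi^\Gamma:W^\Gamma_h\to S^{k(d-3)}$ to be the $(X\times I)$-bundle obtained from the product bundle $S^{k(d-3)}\times(X\times I)\to S^{k(d-3)}$ by attaching the constant family of $1$-handles $h^1$ along $S^{k(d-3)}\times(X\times\{1\})$ and then the family of $2$-handles along the family of framed circles $\gamma_s$ constructed above; this is exactly an $S^{k(d-3)}$-family of $1$- and $2$-handles, all attached inside $b\times\{1\}$. Two verifications remain. First, in every fibre the $2$-handle is attached along a loop meeting the belt sphere of $h^1$ transversally in a single point with the cancelling framing, so $(h^1,h^2)$ is a geometrically cancelling pair and the fibre is diffeomorphic rel $\partial_\sqcup(X\times I)$ to $X\times I$; hence $\Pi^\Gamma$ is an $(X\times I,\partial_\sqcup(X\times I))$-bundle, and the handle pair is geometrically cancelling up to isotopy, which is precisely the strengthening recorded in Theorem~\ref{thm:main1} and Theorem~\ref{thm:bordism-simplify}\,(2). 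Second, the fibrewise restriction of $\Pi^\Gamma$ to $X\times\{1\}$ is, tautologically, the bundle obtained from $S^{k(d-3)}\times X$ by the constant $S^0$-surgery (producing $S^{k(d-3)}\times X_1$) followed by fibrewise surgery along $\gamma_s$; by the identifications of the previous two paragraphs this coincides with the fibrewise Hopf-link surgery defining $\varpi^\Gamma$, so the restriction is $\varpi^\Gamma$, which is conclusion~(a).

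Finally, since all handles sit inside the ball $b$ in the interior of $X$, the bundle $W^\Gamma_h$ is strictly the product outside a neighborhood of $S^{k(d-3)}\times(b\times I)$, so it genuinely has structure group $\Diff(X\times I,\partial_\sqcup(X\times I))$ and is compatible with the framing data propagated from the earlier sections. The case of odd $d$ is identical after the substitutions listed after Theorem~\ref{thm:bordism-simplify}: there one trades the constant family of $(m+1)$-handles arising from one $S^m$-component of the link for a constant family of $m$-handles, keeping the non-constant family of $(m+1)$-handles.
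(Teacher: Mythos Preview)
Your proof is correct and follows essentially the same approach as the paper's: start from Corollary~\ref{cor:bordism}, observe that the strictly trivial family of $(d-1)$-handles (from the constant $S^{d-2}$-component) can be exchanged for a strictly trivial family of $1$-handles with the same top boundary $X\#(S^{d-1}\times S^1)$, then attach the family of $2$-handles along the circles $h_s|_{S^1}$ and invoke Morse cancellation to see that each fibre is $X\times I$. You are more explicit than the paper about the identification $\partial_+C_1\cong X_1$ and the framing bookkeeping, which the paper treats in a single sentence, but the underlying argument is the same.
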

\begin{proof}
By Corollary~\ref{cor:bordism}, there is a cobordism bundle $E\to
S^{k(d-3)}$ which is obtained from the trivialized $({X}\times
I)$-bundle by attaching families of 2- and $(d-1)$-handles along $h_s$
and whose restriction to $\partial_+E$ agrees with $\varpi^\Gamma$.

Let $E_1$ be the family of handlebodies obtained by attaching only the
family of $(d-1)$-handles to the strictly trivial $({X}\times
I)$-bundle by $h_s|_{S^{d-2}}$.  Since the attaching map
$h_s|_{S^{d-2}}$ of the family of $(d-1)$-handles is a strictly
trivial family by Corollary~\ref{cor:bordism}, the family $E_1$ is a
strictly trivial bundle, on the top of which the 2-handle may be
attached along the attaching sphere induced by $h_s|_{S^1}$ on
$\partial_+E_1$ that may not be strictly trivial.

Attaching a $(d-1)$-handle to ${X}\times I$ along an unknotted framed
$(d-2)$-sphere on ${X}\times\{1\}$ turns the top face into ${X}\#
(S^{d-1}\times S^1)$. Also, the same manifold can be obtained by
attaching a 1-handle along a framed 0-sphere on ${X}\times \{1\}$
instead of a $(d-1)$-handle. Thus, we may replace the strictly trivial
bundle $E_1$ by another family $E_1'$ of handlebodies that is obtained
by attaching strictly trivial family of 1-handles to ${X}\times I$,
without changing the manifold
\begin{equation*}
  \partial_+ E_1= S^{k(d-3)}\times({X}\#(S^{d-1}\times S^1)).
\end{equation*}
  Then we attach a family of 2-handles to $E_1'$ along the attaching
  spheres induced by $h_s|_{S^1}$ on $\partial_+E_1'=\partial_+E_1$.
\begin{figure}[!htbp]
\[\includegraphics[height=33mm]{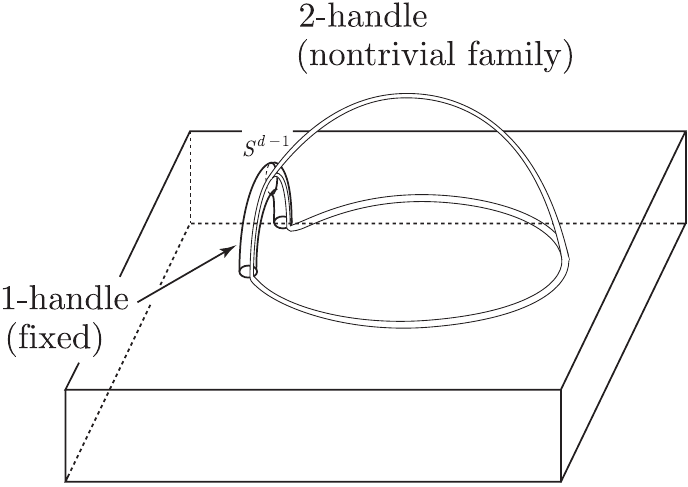} \]
\caption{}\label{canceling-handles}
\end{figure}
The resulting bundle $\Pi^\Gamma:W_h^\Gamma\to S^{k(d-3)}$ is a
$({X}\times I)$-bundle, since the two handles are in a cancelling
position in a fiber, namely, the descending disk of the 2-handle and
the ascending disk of the 1-handle intersects transversally in one
point in $\partial_+E_1$. Then by M.~Morse's result \cite{Mo} (see
also \cite[Theorem~5.4 (First Cancellation Theorem)]{Mi}), the pair of
two handles can be eliminated and the cobordism can be modified into
the trivial $h$-cobordism.  By construction,
$\partial_+W_h^\Gamma=\overline{E}^\Gamma$ and $\pi^\Gamma$ restricts
to $\varpi^\Gamma$.
\end{proof}
\begin{Rem}\label{rem-morse}
We notice that by construction, the bundle $\Pi^\Gamma: W_h^\Gamma\to
S^{k(d-3)}$ admits a fiberwise Morse function $f:W_h^\Gamma\to \R$ and
a fiberwise gradient-like vector field $\xi$ for $f$ such that the
family of handle decompositions for $\xi$ agrees with that of the 1-
and 2-handles in Proposition~\ref{prop:pseudoisotopy}. Such a family
of Morse functions can be constructed by applying
\cite[Theorem~3.12]{Mi} for the families of handles, which is possible
since the families of handles are given by families of attaching maps,
and the construction of the Morse function in the proof of
\cite[Theorem~3.12]{Mi} for the surgery $\chi(V,\varphi)$ depends
smoothly on the attaching maps $\varphi$.
\end{Rem}
\begin{Rem}
A similar trick to turn the constant $(d-1)$-handle upside down into a 1-handle was recently used in a similar setting by David Gay in \cite{Ga}.
\end{Rem}


\section{Proof of Theorems \ref{thm:M} and \ref{thm:C}}
As noted in the introduction, Theorem~\ref{thm:M} is an immediate
corollary of Theorem~ \ref{thm:C}, which we now prove.  We will use a
result from \cite{BHSW} and the following definition.
\begin{Def}\label{folds}(cf. \cite[Definition 2.7]{BHSW})
Let $\pi: E\to B$ be a smooth bundle of $(d+1)$-dimensional
cobordisms. Denote by $V_b$ a fiber over a point $b\in B$, where
$V:=V_b$ is a relative cobordism between $\p_0 V$ and $\p_1 V$ such
that $\p V = \p_0 V\cup \bar\p V \cup \p_1V$.  We assume that a
structure group of the bundle $\pi: E\to B$ is $\Diff_{\sqcup}(V)$,
i.e. of those diffeomorphisms which restrict to the identity near
$\p_0 V\cup \bar\p V$.  We denote by $E_0$, $ E_{\bar\p}$ and $E_1$ a
restriction of the bundle $E$ to the fibers $\p_0 V$, $\bar\p V$ and
$\p_1V$ respectively.  A smooth map $F: E \to B\times I$ is said to be
an admissible family of Morse functions or \emph{admissible with fold
singularities with respect to $\pi$} if it satisfies the following
conditions:
\begin{enumerate}
\item
 The diagram
\begin{equation*}
\begin{diagram}
\setlength{\dgARROWLENGTH}{1.6em}
 \node{}
\node[2]{E}
      \arrow{s,l}{\pi}
      \arrow[2]{e,t}{F}
\node[2]{B\times I}
      \arrow{wsw,b}{p_1}
\\
\node[3]{B}
\end{diagram}
\end{equation*}
commutes.  Here $p_1: B\times I\to B$ is projection on the first
factor.
\item
The pre-images $F^{-1}(B\times\{0\})$ and  
$F^{-1}(B\times\{1\})$ coincide with the submanifolds $E_0$ and $E_1$ 
respectively. 
\item
  The set $\Cr(F)\subset E$ of critical points of $F$ is contained in
  $E\setminus (E_0\cup E_{\p}\cup E_1)$ and near each critical point
  of $F$ the bundle $\pi$ is equivalent to the trivial bundle $\R^k
  \times \R^{d+1} \stackrel{p_1}{\to} \R^k$ so that with respect to
  these coordinates on $E$ and on $B$ the map $F$ is a standard map
  $\R^{k} \times \R^{d+1} \to \R^k \times \R$ with a fold singularity.
\item
For each $z\in B$ the restriction
\begin{equation*}
f_b=F|_{V_b}: V_b\to \{b\}\times I \stackrel{p_2}{\longrightarrow} I
\end{equation*}
is an admissible Morse function, i.e. its critical points have indices
$\leq d-2$.
\end{enumerate}  
\end{Def}
\begin{Lem}\label{lemma-admis}
Let $\Pi^\Gamma: W_h^\Gamma\to S^{k(d-3)}$ be as in
  Proposition~\ref{prop:pseudoisotopy}.  Then each $({X}\times
I,\partial_\sqcup({X}\times I))$-bundle $\Pi^\Gamma: W_h^\Gamma\to
S^{k(d-3)}$ of $h$-cobordisms has an admissible family of Morse
functions as above provided $d\geq 4$.
\end{Lem}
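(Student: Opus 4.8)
The plan is to obtain $F$ from the fibrewise Morse data already supplied by Remark~\ref{rem-morse}, and then to check, one by one, the four conditions of Definition~\ref{folds}; the hypothesis $d\geq 4$ will be needed precisely to guarantee condition~(4).

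First I would invoke Remark~\ref{rem-morse}: the $h$-cobordism bundle $\Pi^\Gamma:W_h^\Gamma\to S^{k(d-3)}$ of Proposition~\ref{prop:pseudoisotopy} already carries a fibrewise Morse function $f$ together with a fibrewise gradient-like vector field whose associated family of handle decompositions is the family of one $m_1$-handle and one $(m_1+1)$-handle building $W_h^\Gamma$ (so $m_1=1$ and the two indices are $1,2$ when $d$ is even, while $m_1=(d-1)/2$ and the indices are $(d-1)/2,(d+1)/2$ when $d$ is odd). Both families of handles are attached along \emph{smooth} families of framed embeddings over $S^{k(d-3)}$, and in fact the family of $m_1$-handles is strictly trivial by Corollary~\ref{cor:bordism}. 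Hence the construction of the Morse function in the proof of \cite[Theorem~3.12]{Mi}, carried out fibrewise, depends smoothly on the parameter, and assembling the fibrewise functions produces a smooth map $F:W_h^\Gamma\to S^{k(d-3)}\times I$ with $p_1\circ F=\pi^\Gamma$ --- this is condition~(1). I would then normalize $f$ on each fibre so that it takes the value $0$ exactly on $\partial_0 V=X\times\{0\}$ and $1$ exactly on $\partial_1 V$ (the fibre of $\varpi^\Gamma$), is regular there and along $\bar\p V=\p X\times I$, and has both of its critical points in the interior; this yields condition~(2) together with the part of condition~(3) asserting $\Cr(F)\subset E\setminus(E_0\cup E_{\bar\p}\cup E_1)$.

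For the fold part of condition~(3) I would exploit that the handle combinatorics is constant over $S^{k(d-3)}$ --- exactly one critical point of each of the two indices in every fibre, with no births or deaths --- so that $\Cr(F)$ is a disjoint union of two embedded copies of $S^{k(d-3)}$, each the image of a smooth section of $\pi^\Gamma$, the one of index $m_1$ being even a constant section since $h_s|_{S^{m_2}}$ is strictly trivial. Near a point of $\Cr(F)$ one trivializes the bundle locally as $U\times\R^{d+1}$ with the critical point at $(b,0)$; the parametrized Morse lemma then writes $F$ as $(b,x)\mapsto(b,\,v_i(b)+Q_i(x))$ for a smoothly varying nondegenerate quadratic form $Q_i$ of the relevant index, which, after diagonalizing $Q_i$ over the (contractible) $U$ and applying a fibre-preserving diffeomorphism of $S^{k(d-3)}\times I$ that makes the critical values locally constant (legitimate since $0<v_1(b)<v_2(b)<1$ throughout), becomes the standard fold model required in Definition~\ref{folds}(3).

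Finally, condition~(4) is where $d\geq 4$ enters: the indices occurring in each $f_b$ are $m_1$ and $m_1+1$, and $m_1+1\leq d-2$ reads $2\leq d-2$ for even $d$ (equivalent to $d\geq 4$) and $\tfrac{d+1}{2}\leq d-2$ for odd $d$ (equivalent to $d\geq 5$, which holds for every odd $d\geq 4$); in either case every $f_b$ is an admissible Morse function and $F$ is the desired admissible family. I expect the only substantive work to be condition~(3), namely verifying that the fibrewise use of \cite[Theorem~3.12]{Mi} genuinely assembles into a single smooth $F$ whose singular locus is a union of sections and whose singularities are all folds. This is exactly the place where the constancy of the handle combinatorics over $S^{k(d-3)}$ and the strict triviality of $h_s|_{S^{m_2}}$ from Corollary~\ref{cor:bordism} are indispensable; once that is granted, the remaining verifications are routine.
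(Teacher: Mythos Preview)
Your proposal is correct and follows essentially the same approach as the paper: invoke Remark~\ref{rem-morse} to obtain the fibrewise Morse function, and then observe that the handle indices $m_1,m_1+1$ satisfy $m_1+1\leq d-2$ for $d\geq 4$. The paper's own proof is extremely terse --- it records only the existence of the fibrewise Morse function from Remark~\ref{rem-morse} and the index (codimension) check --- whereas you have taken the trouble to verify conditions~(1)--(3) of Definition~\ref{folds} explicitly, in particular the fold normal form near $\Cr(F)$; this extra care is welcome but not a genuinely different route.
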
  
\begin{proof}
  Let $\Pi^\Gamma: W_h^\Gamma\to S^{k(d-3)}$ be a
    $({X}\times I,\partial_\sqcup({X}\times I))$-bundle of
    $h$-cobordisms as above.  We just noticed in Remark
    \ref{rem-morse} that there is a fiberwise Morse function $f:
    W_h^\Gamma\to \R$ which agrees with that of the 1- and 2-handles
    (in the case when $d$ is even). In the case when $d$ is odd we
    deal with $m$- and $(m+1)$-handles, where $m=(d-1)/2$.  In
    particular, the codimension of the handles (inside the
    $h$-cobordisms) is at least three for all $d\geq 4$.
\end{proof}
\begin{proof}[Proof of Theorem~\ref{thm:C}] 
%
We consider bundles of $h$-cobordisms we have constructed. In both
cases, when $d$ is even or odd, Lemma \ref{lemma-admis} guarantees
that such a bundle satisfies the conditions of \cite[Theorem
  2.9]{BHSW} since $\Pi^{\Gamma}: W_h^\Gamma\to
  S^{k(d-3)}$ has only admissible fold singularities. Thus we obtain
that every fiber has a psc-metric. This proves Theorems \ref{thm:M}
and \ref{thm:C}.
\end{proof}
\begin{Rem}
We should admit that \cite[Theorem 2.9]{BHSW} assumes that the
structure group is $\Diff_{\p}(V)$; however, it is easy
to see that the same proof works in more general situation, in
particular when the structure group is $\Diff_{\sqcup}(V)$.
\end{Rem}  


\end{document}